\newif\ifarxiv
\newif\ifims
\numberwithin{equation}{section}
\theoremstyle{plain}
\newtheorem{theorem}{Theorem}[section]
\newtheorem{proposition}[theorem]{Proposition}
\newtheorem{lemma}[theorem]{Lemma}
\newtheorem{corollary}[theorem]{Corollary}
\newtheorem*{lemma*}{Lemma}
\theoremstyle{definition}
\newtheorem{assumption}{Assumption}[section]
\newtheorem{example}{Example}[section]
\DeclarePairedDelimiter\set{\{}{\}}
\DeclarePairedDelimiter\abs{\lvert}{\rvert}
\DeclarePairedDelimiter\ceil{\lceil}{\rceil}
\DeclarePairedDelimiter\floor{\lfloor}{\rfloor}
\newcommand{\Z}{\mathbb{Z}}
\newcommand{\R}{\mathbb{R}}
\renewcommand{\Pr}{\mathbb{P}}
\newcommand{\E}{\mathbb{E}}
\newcommand{\Var}{\textup{Var}}
\DeclareMathOperator{\SD}{\textup{SD}}
\DeclareMathOperator{\mat}{\textup{mat}}
\newcommand{\I}{\mathbb{I}}
\newcommand{\diff}{\mathrm{d}}
\newcommand{\intdiff}{\,\diff}
\DeclareMathOperator*\argmin{\textup{arg\,min}}
\DeclareMathOperator{\Bin}{\textup{Bin}}
\DeclareMathOperator{\Ber}{\textup{Ber}}
\DeclareMathOperator{\Poi}{\textup{Poi}}
\newcommand\inner[2]{\langle #1, #2 \rangle}
\DeclarePairedDelimiter\norm{\lVert}{\rVert}
\DeclareMathOperator{\diag}{\textup{diag}}
\newcommand{\TBM}{\operatorname{TBM}}
\newcommand{\eqd}{\overset{d}{=}}
\algrenewcommand{\algorithmicdo}{}
\newcommand{\cA}{\mathcal{A}}
\newcommand{\cS}{\mathcal{S}}
\newcommand{\cX}{\mathcal{X}}
\newcommand{\cY}{\mathcal{Y}}
\newcommand{\bS}{\mathbb{S}}
\newcommand{\nhquad}{\!\!\!}
\newcommand{\spenorm}[1]{{\lVert#1\rVert}_{\rm sp}}
\newcommand{\fronorm}[1]{{\lVert#1\rVert}_{\rm F}}
\newcommand{\maxnorm}[1]{{\lVert#1\rVert}_{\rm max}}
\newcommand{\onenorm}[1]{{\lVert#1\rVert}_1}
\newcommand{\inftynorm}[1]{{\lVert#1\rVert}_{\infty\rightarrow\infty}}
\newcommand{\Bigabs}[1]{{\Big\lvert#1\Big\rvert}}
\newcommand{\indic}{\mathbb{I}}
\newcommand{\Ctrim}{C_{\rm{trim}}}
\renewcommand{\eps}{\varepsilon}
\newcommand{\pr}{\Pr}
\newcommand{\weq}{\ = \ }
\newcommand{\wapprox}{\ \approx \ }
\newcommand{\wle}{\ \le \ }
\newcommand{\wge}{\ \ge \ }
\newcommand{\wll}{\ \ll \ }
\newcommand{\iprod}[1]{\langle #1 \rangle}
\let\loss\ell
\renewcommand{\ell}{l}
\renewcommand{\mnote}[1]{\marginpar{\parbox{25mm}{\color{red} \tiny #1}}}
\newcommand{\SPcite}[1]{%
  \IfStrEqCase{#1}{%
    {bounded 01}{\cite[Proposition~6.1:(ii)]{Leskela_Valimaa_2025}}%
    {bounded -11}{\cite[Proposition~6.1:(iv)]{Leskela_Valimaa_2025}}%
    {balanced}{\cite[Proposition~5.1:(ii)]{Leskela_Valimaa_2025}}%
    {independent sum}{\cite[Proposition~5.2]{Leskela_Valimaa_2025}}%
    {absolute value}{\cite[Proposition~5.3]{Leskela_Valimaa_2025}}%
    {examples}{\cite[Examples]{Leskela_Valimaa_2025}}%
    {Bennett}{\cite[Proposition~3.1]{Leskela_Valimaa_2025}}%
    {Bernstein1}{\cite[Proposition~3.1]{Leskela_Valimaa_2025}}%
    {Bernstein2}{\cite[Proposition~3.1]{Leskela_Valimaa_2025}}%
  }[\textbf{Unknown keyword}]%
}
\begin{document}

\ifims
\begin{frontmatter}
\title{Consistent spectral clustering in sparse tensor block models}
\begin{aug}
\author[A]{\fnms{Ian}~\snm{\Valimaa}\ead[label=e1]{ian.valimaa@aalto.fi}}
\and
\author[A]{\fnms{Lasse}~\snm{\Leskela}\ead[label=e2]{lasse.leskela@aalto.fi}}
\address[A]{Department of Mathematics and Systems Analysis, Aalto University\printead[presep={,\ }]{e1,e2}}
\end{aug}
\fi

\ifarxiv
\title{Consistent spectral clustering in sparse \\ tensor block models}
\author{Ian Välimaa and Lasse Leskelä}
\date{\today}
\maketitle
\fi

\begin{abstract}
High-order clustering aims to classify objects
in multiway datasets that are prevalent in various fields such as bioinformatics, recommendation systems, and social network analysis.
Such data are often sparse and high-dimensional, posing significant statistical and computational challenges.
This paper introduces a tensor block model specifically designed for sparse integer-valued data tensors.
We propose a simple spectral clustering algorithm augmented with a trimming step to mitigate noise fluctuations,
and identify a density threshold that ensures the algorithm's consistency.  Our approach models sparsity using a sub-Poisson noise concentration framework, accommodating heavier than sub-Gaussian tails.
Remarkably, this natural class of
tensor block models is closed under aggregation across arbitrary modes.
Consequently, we obtain a comprehensive framework for
evaluating the tradeoff between signal loss and noise reduction
incurred by aggregating data.
The analysis is based on a novel concentration bound for sparse random Gram matrices.
The theoretical findings are illustrated through numerical experiments.
\end{abstract}

\ifims
\begin{keyword}[class=MSC]
\kwd[Primary ]{62H30}
\kwd{62F12}
\kwd[; secondary ]{05C65}
\kwd{05C80}
\end{keyword}

\begin{keyword}
\kwd{latent block model}
\kwd{stochastic block model}
\kwd{almost exact recovery}
\kwd{weak consistency}
\kwd{multiway clustering}
\kwd{higher-order network}
\kwd{hypergraph}
\end{keyword}
\fi

\ifims
\end{frontmatter}
\fi

\tableofcontents

\section{Introduction}

\subsection{Background}
Multiway clustering refers to the statistical problem of analyzing an array of size
$n_1 \times \cdots \times n_d$ by grouping similar elements along one or more of its $d$ modes. Such arrays arise in many applications, including multi-tissue gene expression data
where arrays have modes corresponding to individuals, genes, and tissues \cite{Hore_etal_2016, Mele_etal_2015, Wang_Fischer_Song_2019}; recommendation systems, where user–item interactions include additional modes for context \cite{Bu_etal_2010,LaGatta_etal_2023}; 
and computer vision, where 
multiway relationships among pixels, features, and object parts help capture complex visual patterns
\cite{Agarwal_etal_2005,Alaluusua_Avrachenkov_Kumar_Leskela_2025+}. In many of these settings, the observed data consist of nonnegative integers, with entries representing count data such as gene transcript abundances and interaction frequencies.
%

To study such settings, we focus on tensor block models for count-valued arrays.
A canonical example is a $d$-way array $\cY \in \Z_{\ge 0}^{n \times \cdots \times n}$
in which the entries $\cY(i_1, \dots, i_d)$ are Poisson distributed with mean $\rho \cS_{z(i_1) \dots z(i_d)}$.
Here $z\colon \{1, \dots, n\} \to \{1, \dots, r\}$ denotes the cluster membership vector,
$\rho > 0$ is a scaling parameter controlling sparsity, and
$\cS \in [0,1]^{r \times \cdots \times r}$ is
a $d$-way core tensor characterizing how cluster memberships affect interactions.
This framework generalizes the 
$d$-uniform hypergraph stochastic block model (HSBM)
\cite{Ghoshdastidar_Dukkipati_2014}
from binary data to integer-valued count data.
The aim is to recover the cluster membership vector $z$ from the observed
data tensor,
identifying clusters up to permutation of labels.
Sparsity is modeled by letting $\rho \to 0$ as $n \to \infty$,
while keeping the core tensor $\cS$ fixed.
The key theoretical objective is to identify
critical thresholds of $\rho$ that separate the regimes
in which consistent recovery is statistically possible
from those in which it is also computationally tractable.

In regular $d$-uniform HSBM models, the information-theoretic consistency threshold is
\begin{equation}
 \label{eq:threshold d-1}
 \rho \asymp n^{-(d-1)},
\end{equation}
meaning that
no algorithm can recover the clusters better than random guessing
when $\rho \ll n^{-(d-1)}$
\cite{Gu_Polyanskiy_2023,Mossel_Neeman_Sly_2015,Mossel_Neeman_Sly_2018},
whereas consistent clustering is possible when $\rho \gg n^{-(d-1)}$
\cite{Dumitriu_Wang_Zhu_2025}.
More importantly, polynomial-time clustering algorithms have been shown to 
achieve consistency
in sparse regimes with $\rho \gg n^{-(d-1)}$
\cite{Chien_Lin_Wang_2019,Dumitriu_Wang_Zhu_2025,Ghoshdastidar_Dukkipati_2017_AoS}.
Moreover, exact recovery of clusters is known to be achievable when 
$\rho$ exceeds the threshold \eqref{eq:threshold d-1}
by a logarithmic factor
\cite{Alaluusua_Avrachenkov_Kumar_Leskela_2023,Gaudio_Joshi_2023,Kim_Bandeira_Goemans_2018-07-08, Zhang_Tan_2023}.
These advances suggest that
there is no statistical--computational gap in clustering sparse uniform hypergraphs.
However, all of the aforementioned polynomial-time algorithms
rely on aggregating the observed data tensor into a matrix with entries
\[
 A_{ij}
 \weq \sum_{i_3,\dots,i_d}\cY_{iji_3\dots i_d}.
\]
The analytical accuracy guarantees for such algorithms essentially require
that the population-level aggregate matrix $\E A$
is sufficiently informative for cluster identification.
For $d$-uniform HSBMs with a core tensor of form
\[
 \cS_{z(i_1) \dots z(i_d)}
 \weq
 \begin{cases}
  a, &\quad z(i_1) = \cdots =z(i_d),\\
  b, &\quad \text{else},
 \end{cases}
\]
aggregation does not cause information loss at the population level, and, in regimes relevant for exact recovery, not even at the sample level
\cite{Bresler_Guo_Polyanskiy_2024}.
However, for more general core tensors $\cS$,
aggregation may result in information loss, rendering clustering based on
the aggregate data matrix impossible.

When the cluster membership vector $z$
is not identifiable from $\E A$, aggregation-based methods break down.
An alternative reduction from tensors to matrices is flattening,
a key component of low-rank tensor approximation methods such as
higher-order singular value decomposition
(HOSVD) \cite{DeLathauwer_DeMoor_Vandewalle_2000} and
higher-order orthogonal iteration (HOOI) \cite{DeLathauwer_DeMoor_Vandewalle_2000b}.
Unlike aggregation, flattening preserves all the information. Ke, Shi, and Xia \cite{Ke_Shi_Xia_2020} investigated flattening-based methods for
a degree-corrected block model for hypergraphs. They established consistency above a density threshold 
\begin{equation}
 \label{eq:threshold d/2 log}
 \rho\asymp n^{-d/2} \log^{1/2} n.
\end{equation}
While this threshold is significantly above the information-theoretic limit \eqref{eq:threshold d-1},
it is widely believed to be close to the optimal computational threshold for polynomial-time algorithms.
%
Indeed, Kunisky \cite{Kunisky_2025} recently showed that no low coordinate degree algorithm can detect the community structure below $\rho\asymp n^{-d/2}D(n)^{-(d-2)/2}$, where $D(n)$ is conjectured to correspond to $\omega(\log n)$ for polynomial-time algorithms. 

%
Despite these advances, theoretical understanding of clustering Poisson and other count-valued data in tensors remains limited.
Poisson distributions have heavier tails than sub-Gaussian ones, and thus require different tools for concentration analysis.
Yet Poisson models are natural in many applications, including ecology (species count per site \cite{Chiquet_Mariadassou_Robin_2018}) and political science (vote count per polling station \cite{Chiquet_Robin_Mariadassou_2019}). Because sub-Poisson tail behavior is stable under aggregation, it provides a natural and robust assumption for modeling counts. Moreover, the sub-Poisson class is broad, encompassing both Bernoulli and Poisson distributions.


\subsection{Main contributions}  

The main contributions of this paper are the following.
%
%
First, we propose a polynomial-time clustering method that avoids aggregation altogether. It flattens the data tensor into a matrix and then computes a hollow Gram matrix followed by spectral decomposition. This sidesteps the problem of an uninformative adjacency matrix. Unlike most previous methods, we include a trimming step to remove high-degree nodes.
%
%
Second, by developing a new concentration inequality for integer-valued Gram matrices, we prove that consistent clustering is possible in polynomial time when
\begin{equation}\label{eq:critical threshold}
\rho\gg n^{-d/2}.
\end{equation}
Notably, this matches the computational lower bound in \cite{Kunisky_2025} up to logarithmic factors and sharpens the threshold \eqref{eq:threshold d/2 log} by removing the logarithmic factor. This suggests a nearly optimal algorithm for this class of problems.
%
%
Third, we extend the stochastic block modeling framework from binary to integer-valued data tensors. This is enabled by applying concentration bounds that exploit the sub-Poisson tail via Bennett's inequality~\cite{Leskela_Valimaa_2025}. This becomes particularly useful when aggregating the data to a lower-order tensor, because it preserves sub-Poisson tails. 
Moreover, our algorithm supports mode-specific clustering with varying dimensions and cluster structures across modes, and our analysis establishes recovery guarantees in this general multiway setting.
%
%
For $n_1\times\dots\times n_d$-dimensional arrays, the critical threshold \eqref{eq:critical threshold} then generalizes to
\begin{equation}
 \label{eq:DensityMultiway}
\rho\gg(n_1\cdots n_d)^{-1/2}.    
\end{equation}

\subsection{Related work}  

Flattening-based multiway clustering methods have been studied in several recent works under closely related models. 
Particular instances are temporal and multilayer networks, modeled as binary arrays of dimensions $n\times n\times t$, where $t$ corresponds to the number of layers or time slots \cite{Alaluusua_Avrachenkov_Kumar_Leskela_2023,Avrachenkov_Dreveton_Leskela_2024,Avrachenkov_Dreveton_Leskela_2025+, Lei_Zhang_Zhu_2024,Lyu_Li_Xia_2023+}. 
In this setting, Lei and Lin~\cite{Lei_Lin_2022} established consistency above a density threshold $n^{-1} t^{-1/2} \log^{1/2}(n + t)$. More recently, Lei, Zhang, and Zhu \cite{Lei_Zhang_Zhu_2024} proved a computational lower bound $n^{-1}t^{-1/2}\log^{-1.4} n$, assuming a low-degree polynomial conjecture.
These thresholds align with ours up to logarithmic factors. However, these works do not address Poisson data.

Lyu, Li, and Xia \cite{Lyu_Li_Xia_2023+} studied clustering in mixture multilayer Poisson block models of size $n\times n\times t$. They provide a consistent clustering method achieving an asymptotically optimal misclassification rate. The results, however, apply only to dense models with $\rho$ being of constant order. 

Han et al.~\cite{Han_Luo_Wang_Zhang_2022} studied a real-valued tensor block model under sub-Gaussian noise. They showed that their initialization algorithm is weakly consistent when
$
\frac{\Delta^2}{\sigma^2} \gg (n_1 \cdots n_d)^{-1/2},
$
where \(\sigma\) denotes the maximum sub-Gaussian norm of the noise tensor entries, and in our asymptotic setting, \(\Delta \asymp \rho\).
In principle, their result could be applied to sparse integer-valued models as well. However, this approach leads to suboptimal density requirements. In particular, applying sub-Gaussian concentration to Bernoulli entries with mean \(\rho \ll 1\) yields \cite{Buldygin_Moskvichova_2013}
\[
 \sigma^2
 = \frac{1 - 2\rho}{2 \log \frac{1 - \rho}{\rho}}
 \sim \frac{1}{2 \log \frac{1}{\rho}},
\]
which implies a density condition of
$
\rho \gg (n_1 \cdots n_d)^{-1/4} \log^{1/2}(n_1 \cdots n_d),
$
a significantly stronger requirement than the one in \eqref{eq:DensityMultiway}.

\subsection{Organization}
This paper is structured as follows. Section \ref{sec:TBM} discusses the tensor formalism and the statistical model. Section \ref{sec:Main results} presents a spectral clustering algorithm and states the main theorem describing sufficient conditions for the consistency of the algorithm. Section \ref{sec:Numerical experiments} demonstrates the main result with numerical experiments. Section \ref{sec:Discussion} discusses related literature. Finally, Section \ref{sec:Technical overview} gives an overview of the intermediate results needed to prove the main theorem. Technical arguments are postponed to Appendices \ref{app:Preliminaries}, \ref{app:the:TrimmedMatrixConcentration}, \ref{app:lem:AX^T bound} and \ref{app:thm:hollow gram matrix bound}.
\section{Tensor block model}
\label{sec:TBM}

\subsection{Notation}

Let $\R$ and $\Z$ denote the sets of
real numbers and integers, respectively, with $\R_{\ge 0}$ and $\Z_{\ge 0}$ denoting their nonnegative elements. We write $[n] = \{1, \dots, n\}$.
The indicator of a statement $A$ is denoted $\I\{A\}$, with $\I\{A\} = 1$ if $A$ is true and $0$ otherwise. The natural logarithm is denoted by $\log$. We write $x \wedge y = \min\{x,y\}$ and $x \vee y = \max\{x,y\}$,
together with $(x)_+ = x \vee 0$.
Given nonnegative sequences $(x_n)$ and $(y_n)$, we write $x_n \ll y_n$ (or $x_n = o(y_n)$) if $x_n / y_n \to 0$, and $x_n \lesssim y_n$ (or $x_n = \mathcal{O}(y_n)$) if the ratio $x_n / y_n$ is bounded. We write $x_n \asymp y_n$ (or $x_n = \Theta(y_n)$) if both $x_n \lesssim y_n$ and $y_n \lesssim x_n$, and $x_n \sim y_n$ if $x_n / y_n \to 1$.

For vectors $u \in \R^n$, we write $\norm{u} = \sqrt{\sum_i u_i^2}$ and $\norm{u}_1 = \sum_i \abs{u_i}$.
The spectral norm of a matrix $A \in \R^{n\times m}$ is denoted by $\spenorm{A}=\sup_{\norm{u}\le 1}\norm{Au}$.
We also denote $\maxnorm{A} = \max_{ij} \abs{A_{ij}}$
and $\inftynorm{A} = \max_i \sum_j \abs{A_{ij}}$.
We write \( |A| = (|A_{ij}|) \) for the matrix of entrywise absolute values.
For a square matrix $A$, $\diag(A)$ denotes the diagonal matrix whose diagonal entries coincide with those of $A$.

For tensors, we primarily follow the standard notational conventions in \cite{DeLathauwer_DeMoor_Vandewalle_2000}.
A tensor of order $d$ is an array $\mathcal{A} \in \R^{n_1 \times \dots \times n_d}$, where each dimension $k = 1, \dots, d$ is called a mode. A slice along mode $k$ is denoted by $\mathcal{A}_{:\dots:i_k:\dots:} \in \R^{n_1 \times \dots \times n_{k-1} \times n_{k+1} \times \dots \times n_d}$. For $d = 2$, we write $\mathcal{A}_{i:}$ for the $i$th row and $\mathcal{A}_{:j}$ for the $j$th column. The Frobenius norm of a tensor is denoted by
\(
\norm{\mathcal{A}}_{\mathrm{F}} = \sqrt{ \sum_{i_1, \dots, i_d} \mathcal{A}_{i_1 \dots i_d}^2 },
\)
and the elementwise product of tensors with matching dimensions by $\mathcal{A} \odot \mathcal{B}$.
The mode-$k$ product of a tensor $\mathcal{A}\in\R^{n_1\times\dots\times n_d}$ by a matrix $B\in\R^{m\times n_k}$ is defined as a tensor $\mathcal{A}\times_kB\in\R^{n_1\times\dots\times n_{k-1}\times m\times n_{k+1}\times\dots\times n_d}$ with entries
\[
(\mathcal{A}\times_kB)_{i_1\dots i_{k-1}ji_{k+1}\dots i_d}
= \sum_{i_k}\mathcal{A}_{i_1\dots i_d}B_{ji_k}.
\]

A Tucker decomposition of a tensor \(\mathcal{A} \in \mathbb{R}^{n_1 \times \cdots \times n_d}\) expresses \(\mathcal{A}\) as
\[
\mathcal{A} = \mathcal{C} \times_1 F^{(1)} \times_2 \cdots \times_d F^{(d)},
\quad \text{with} \quad
\mathcal{A}_{i_1, \dots, i_d} = \sum_{j_1, \dots, j_d} \mathcal{C}_{j_1, \dots, j_d} F^{(1)}_{i_1 j_1} \cdots F^{(d)}_{i_d j_d},
\]
where \(\mathcal{C} \in \mathbb{R}^{m_1 \times \cdots \times m_d}\) is the core tensor and \(F^{(k)} \in \mathbb{R}^{n_k \times m_k}\) are the factor matrices.  
In the literature, including \cite{DeLathauwer_DeMoor_Vandewalle_2000}, the factor matrices \(F^{(k)}\) are often assumed to have orthonormal columns, i.e., \((F^{(k)})^\top F^{(k)} = I\). We do not impose this assumption here.

The mode-\(k\) matricization of a tensor
\(\mathcal{A} \in \mathbb{R}^{n_1 \times \cdots \times n_d}\)
is defined as a matrix
\(\mat_k(\mathcal{A}) \in \mathbb{R}^{n_k \times \frac{n_1 \cdots n_d}{n_k}}\),
where the rows correspond to indices of the \(k\)th mode and the columns correspond to the remaining modes combined. More precisely, the entries satisfy
\[
 \mathcal{A}_{i_1, \dots, i_d}
 \weq \mat_k(\mathcal{A})_{i_k, j(i_1, \dots, i_{k-1}, i_{k+1}, \dots, i_d)},
\]
where 
\(
 j \colon [n_1] \times \cdots \times [n_{k-1}] \times [n_{k+1}] \times \cdots \times [n_d]
 \to [ n_1 \cdots n_d / n_k]
\)
is the bijection corresponding to lexicographical ordering, which we use throughout.
As noted in \cite{DeLathauwer_DeMoor_Vandewalle_2000},
the mode-\(k\) product can be expressed via matricization as
$\mat_k(\mathcal{A}\times_k B) = B\mat_k\mathcal{A}$.
Moreover, successive products can be calculated as
$(\mathcal{A}\times_k B)\times_k C = \mathcal{A}\times_k CB$,
and $(\mathcal{A}\times_k B)\times_l C = (\mathcal{A}\times_l C)\times_k B$ for $k\ne l$.

\subsection{Statistical model}

The integer-valued \emph{tensor block model}
\(\TBM(\rho, \cS, z_1, \dots, z_d)\)
with dimensions $n_1,\dots,n_d$ and cluster counts $r_1,\dots,r_d$ 
represents the distribution of a data tensor
\(\cY \in \Z^{n_1 \times \cdots \times n_d}\)
whose entries are independent random variables with expectation
\begin{equation}
 \label{eq:TBM}
 \E \cY_{i_1, \dots, i_d}
 = \rho \, \cS_{z_1(i_1), \dots, z_d(i_d)},
\end{equation}
where \(\rho \ge 0\) is the \emph{density}, \(\cS \in [-1,1]^{r_1 \times \cdots \times r_d}\)
is the
\emph{core tensor}, and \(z_1 \in [r_1]^{n_1}, \dots, z_d \in [r_d]^{n_d}\) are the \emph{cluster membership vectors}%
\footnote{In network analysis contexts, clusters are often called \emph{communities} or \emph{blocks}.}.
An index \(i_k \in [n_k]\) is said to belong to cluster \(l_k\) along mode \(k\) if \(z_k(i_k) = l_k\).
The mean \(\cX = \E \cY\) is referred to as the \emph{signal tensor},
and the deviation \(\mathcal{E} = \cY - \cX\) as the \emph{noise tensor},
see Figure~\ref{fig:Ongelma}.

\begin{figure}[h]
 \centering
 \newcommand{\scalefactor}{0.40}
 \subfloat[\centering Data tensor $\cY$ with indices\\ ordered randomly.]
 {\includegraphics[trim={2cm 2cm 2cm 2cm},clip,scale=\scalefactor]{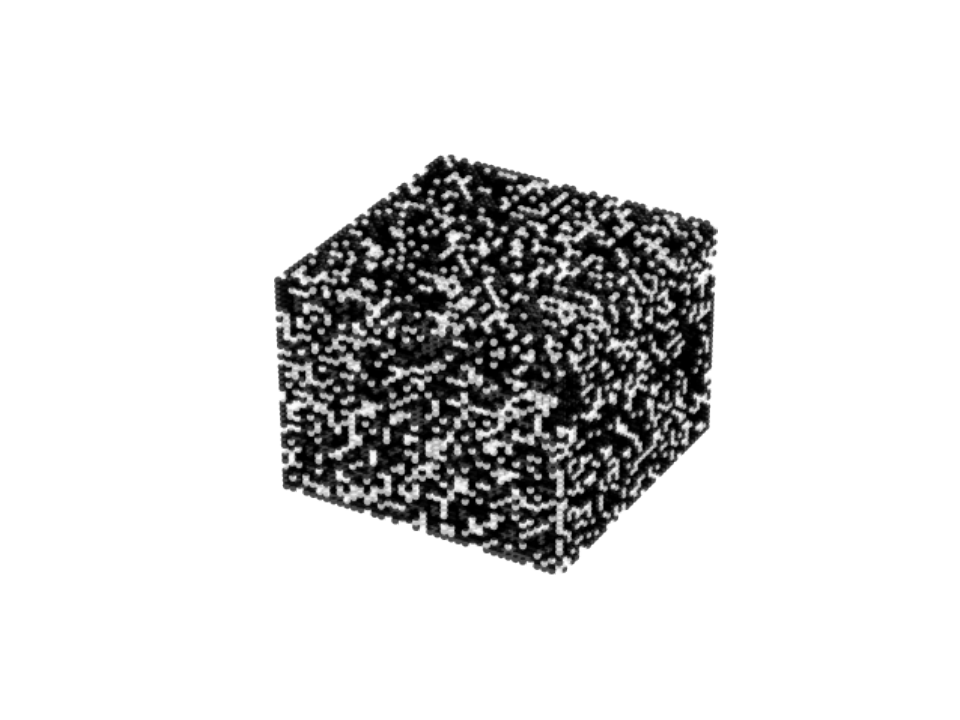}}
 \hspace{15mm}
 \subfloat[\centering Data tensor $\cY$ with indices\\ reordered by the clusters.]
 {\includegraphics[trim={2cm 2cm 2cm 2cm},clip,scale=\scalefactor]{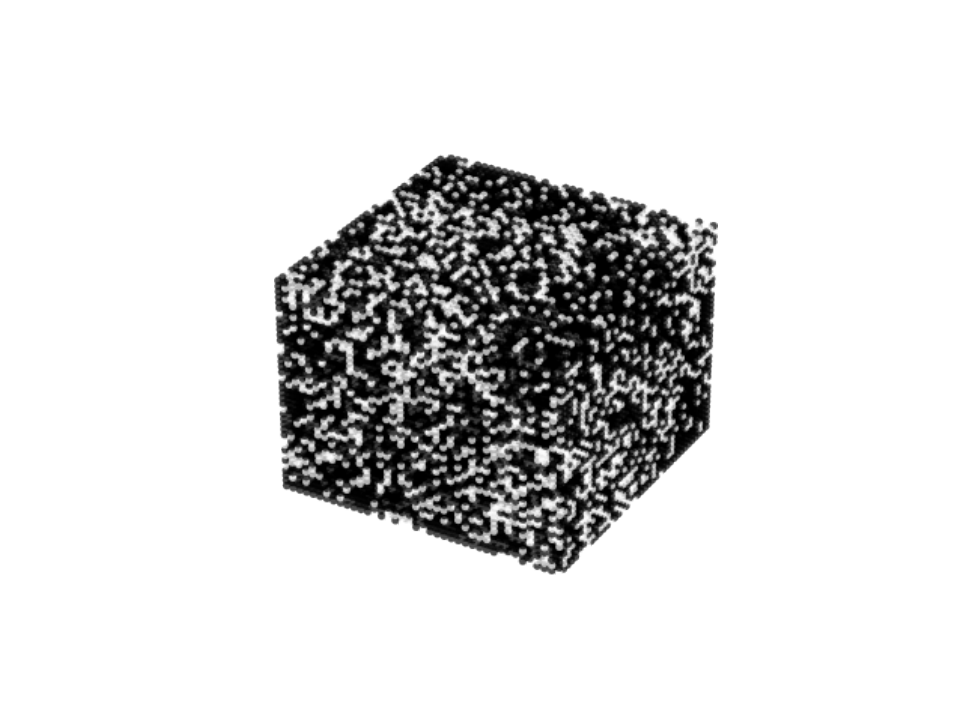}}
 \\
 \subfloat[\centering Signal tensor $\cX = \E \cY$ with indices ordered randomly.]
 {\includegraphics[trim={2cm 2cm 2cm 2cm},clip,scale=\scalefactor]{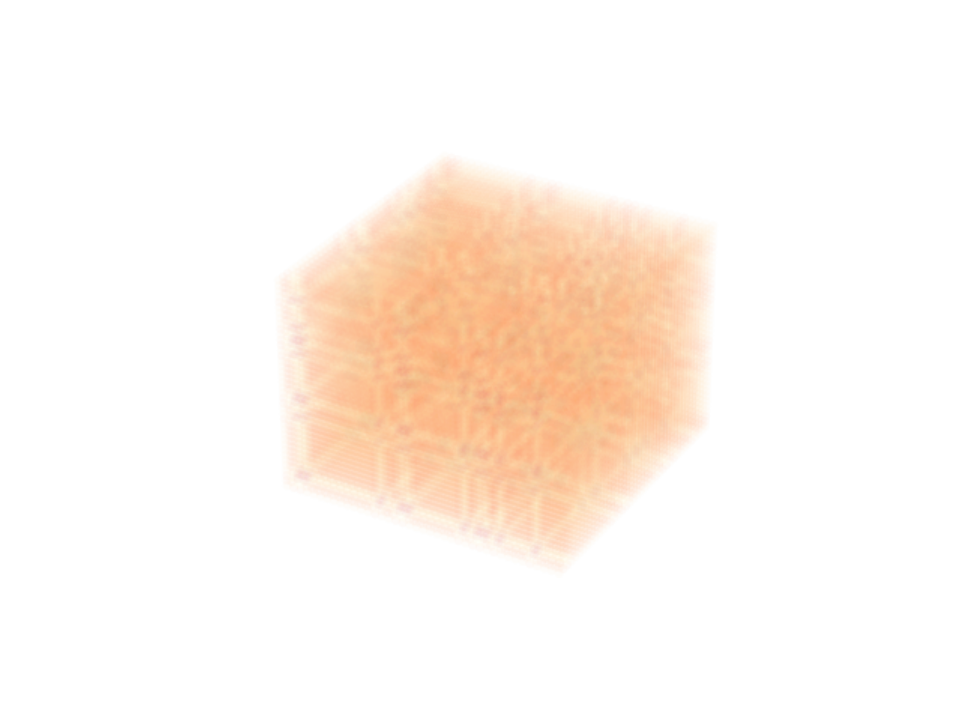}}
 \hspace{15mm}
 \subfloat[\centering Signal tensor $\cX = \E \cY$ with indices reordered by the clusters.]
 {\includegraphics[trim={2cm 2cm 2cm 2cm},clip,scale=\scalefactor]{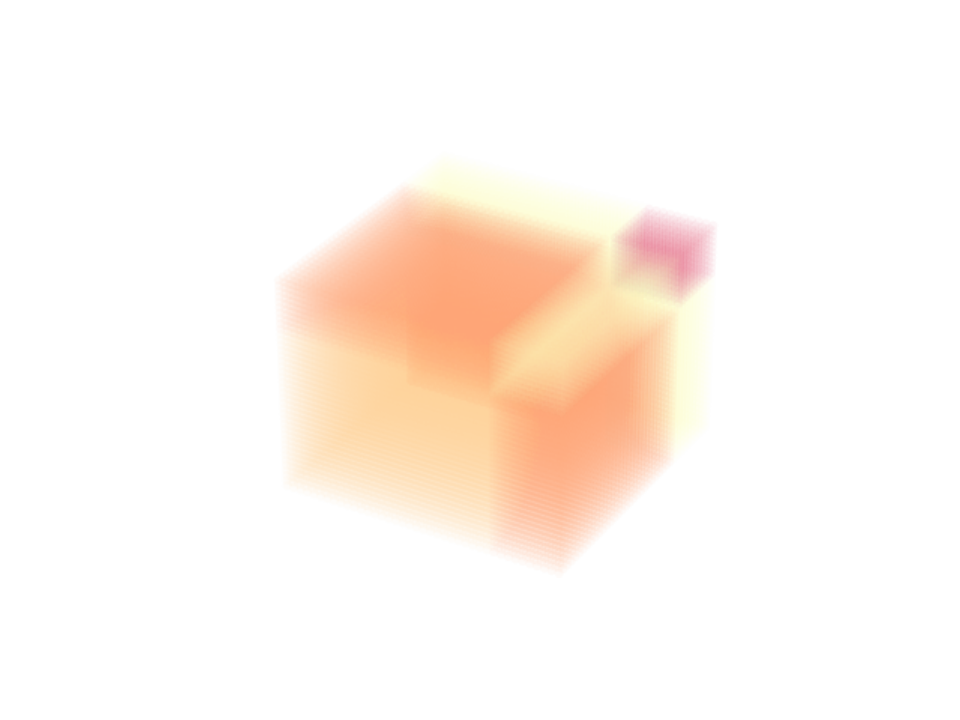}}
 \\
 \caption{Binary TBM of order $d=3$ 
 with dimensions $n_1=n_2=n_3=40$, cluster counts
 $r_1=r_2=r_3=2$,
 and identical clustering across modes (\(z_1 = z_2 = z_3\)).}
 \label{fig:Ongelma}
\end{figure}

In this work we focus on large and sparse data tensors.
This setting is captured by a sequence of models
\( \TBM(\rho^{(\nu)}, \cS^{(\nu)}, z_1^{(\nu)}, \dots, z_d^{(\nu)}) \),
with $z_k^{(\nu)} \in [r_k^{(\nu)}]^{n_k^{(\nu)}}$,
indexed by a scale parameter $\nu=1,2,\dots$ so
that
\begin{equation}
 \label{eq:LargeSparse}
 n_1^{(\nu)} \cdots n_d^{(\nu)} \to \infty
 \quad\text{and}\quad
 \rho^{(\nu)} \to 0
\end{equation}
as $\nu \to \infty$.
The number of modes \(d\) is assumed fixed and independent of the scale parameter throughout.
To avoid cluttering the notation, the scale parameter $\nu$ is omitted from the notation
whenever there is no risk of confusion.
Then the limits in \eqref{eq:LargeSparse} are simply expressed as 
$n_1 \cdots n_d \gg 1$ and $\rho \ll 1$.

In multiway clustering, the goal is to recover the underlying cluster
membership vectors \(z_1, \dots, z_d\) from the observed tensor \(\cY\).
The quality of an estimate \(\hat{z} \in [r]^n\) of a membership vector
\(z \in [r]^n\) is measured by the \emph{misclassification rate}
\begin{equation}
\label{eq:MisclassificationRate}
\loss(z, \hat{z})
=
\min_{\text{permutation } \pi: [r] \to [r]}
\frac{1}{n} \sum_{i=1}^n \mathbb{I}\{\pi(\hat{z}(i)) \ne z(i)\}.
\end{equation}
In a large-scale setting, an estimator $\hat z$
of a cluster membership vector \(z = z^{(\nu)}\),
based on input data \(\cY = \cY^{(\nu)}\),
is called
\emph{weakly consistent} if for any \(\eps > 0\),
\(\Pr(\loss(z, \hat{z}(\cY)) \ge \eps) \to 0\), and
\emph{strongly consistent} if
\(\Pr(\loss(z, \hat{z}(\cY)) = 0) \to 1\)
as \(\nu \to \infty\).

\subsection{Cluster balance and separation}
\label{sec:SignalSeparation}
The identifiability of clusters along mode $k$ requires that
the mode-$k$ matricization of the core tensor has distinct rows.
We also assume that these rows are separated by
\begin{equation}
 \label{eq:ClusterSeparation}
 \de_k
 \weq \min_{l\ne l'}
 \frac{\norm{(\mat_k \cS)_{l:}-(\mat_k\cS)_{l':}}}{\sqrt{r_1\cdots r_d/r_k}}
 \ > \ 0,
\end{equation}
in which we note that the denominator equals the square root of the width of $\mat_k \cS$.
The number $\de_k$ is called the \emph{mode-$k$ cluster separation}
of the model.

We will also assume that the cluster sizes along mode $k$ are of comparable
order, in the sense that
\begin{equation}
 \label{eq:ClusterBalance}
 \alpha_k
 \weq \min_{l \in [r_k]} \frac{\abs{z_k^{-1}\{l\}}}{n_k/r_k}
 \ > \ 0.
\end{equation}
The number $\alpha_k$ is called the \emph{mode-$k$ cluster balance coefficient}.
We note that $\al_k \le 1$, and the upper bound is achieved when
all cluster sizes along mode $k$ are exactly equal.

\subsection{Sub-Poisson data}
\label{sec:Regularity}

To analyze count data with unbounded entries, the classical sub-Gaussian assumption may be overly conservative.
The \emph{sub-Poisson} framework provides a natural extension of the sub-Gaussian paradigm to variables whose tails resemble those of a Poisson distribution, providing a variance-proxy interpretation that better reflects the behavior of such data \cite{Boucheron_Lugosi_Massart_2013,Leskela_Valimaa_2025}.
We say that an integrable real-valued random variable $X$ is sub-Poisson with \emph{variance proxy} $\sig^2 \ge 0$
if
\begin{equation}
 \label{eq:SubPoissonMain}
 \E e^{\la (X - \E X)}
 \wle \exp\bigl\{ \sig^2 (e^{\abs{\la}}-1-\abs{\la}) \bigr\}
 \quad \text{for all $\la \in \R$.}
\end{equation}
In some results, we also require a bound on the mean absolute deviation:
\begin{equation}
 \label{eq:NoiseL1}
 \E \abs{ X - \E X }
 \wle \rho.
\end{equation}
We say that $X$ is sub-Poisson with \emph{dispersion parameter} $\rho$
if \eqref{eq:SubPoissonMain}--\eqref{eq:NoiseL1} are valid with $\rho = \sig^2$.
Any binary, binomial, and Poisson random variable $X$ is sub-Poisson with
dispersion parameter $2 \E X$,
where the factor 2 accounts for the signed nature of the definition (see \cite{Leskela_Valimaa_2025} for details).

\begin{example}[Bernoulli TBM]
\label{ex:Bernoulli TBM}
In a binary TBM, the data entries are Bernoulli distributed
with mean 
$\E \cY_{i_1\dots i_d} = \cX_{i_1\dots i_d}$,
where $\cX_{i_1\dots i_d} = \rho \cS_{z_1(i_1)\dots z_d(i_d)}$.
We find that 
$\E \cY_{i_1\dots i_d} \asymp \rho$
and that the data tensor entries are sub-Poisson with
dispersion parameter $2\rho$.
When the dimensions are equal and all modes share the same 
membership vector ($z_1 = \cdots = z_d$),
this model corresponds to a directed version of the hypergraph stochastic block model \cite{Ghoshdastidar_Dukkipati_2014}.
\end{example}

\begin{example}[Poisson TBM]
Multiway nonnegative count data can be naturally represented by a 
tensor block model with Poisson distributed data entries
$\cY_{i_1\dots i_d} \eqd \Poi(\cX_{i_1\dots i_d})$,
with mean
$\E \cY_{i_1\dots i_d} =\rho \cS_{z_1(i_1)\dots z_d(i_d)} \asymp \rho$,
and the data entries are sub-Poisson with dispersion parameter $2\rho$.
\end{example}

\section{Main results}\label{sec:Main results}

\subsection{Algorithm}

Algorithm~\ref{alg:Spektraaliklusterointi} describes a spectral clustering procedure for recovering 
a latent cluster membership vector $z_k$ along mode $k$. To estimate the cluster membership vectors of all modes, the algorithm may be run $d$ times, once for each mode.

\begin{algorithm}
\caption{Hollow SVD Tensor Clustering}
\label{alg:Spektraaliklusterointi}
\begin{algorithmic}
\Require Data tensor $\cY \in \mathbb{R}^{n_1 \times \cdots \times n_d}$, mode $k$, cluster count $r_k$, trimming threshold $\tau$, relaxation constant $Q>1$
\Ensure Cluster membership vector $\hat{z}_k \in [r_k]^{n_k}$

\State Let $Y \gets \mat_k(\cY) \in \R^{n_k \times m_k}$
be the mode-$k$ matricization of $\cY$, with $m_k = \frac{n_1 \cdots n_d}{n_k}$
\State Let $|Y| \in \mathbb{R}^{n_k \times m_k}$ with entries $|Y|_{ij} = |Y_{ij}|$
\State Let $A \gets Y Y^\top - \mathrm{diag}(Y Y^\top)$
\State Let $\bar A \gets \abs{Y} \, \abs{Y}^\top - \mathrm{diag}(\abs{Y} \, \abs{Y}^\top)$

\State For $i, j \in [n_k]$: set $A_{ij} \gets 0$ if $\sum_{j'} \bar A_{i j'} > \tau$ or $\sum_{i'} \bar A_{i' j} > \tau$

%

\State Compute eigen-decomposition
$A = \sum_{i=1}^{n_k} \hat{\lambda}_i \hat{u}_i \hat{u}_i^\top$,
where $|\hat{\lambda}_1| \geq \cdots \geq |\hat{\lambda}_{n_k}|
$

\State Let
$
\hat{U} \gets [\hat{u}_1 \cdots \hat{u}_{r_k}] \in \mathbb{R}^{n_k \times r_k}$
and $
\hat{\Lambda} \gets \mathrm{diag}(\hat{\lambda}_1, \ldots, \hat{\lambda}_{r_k}) \in \mathbb{R}^{r_k \times r_k}
$

\State Cluster the rows of $\hat{U} \hat{\Lambda} \in \mathbb{R}^{n_k \times r_k}$ into $r_k$ clusters by finding
a membership vector $\hat{z}_k \in [r_k]^{n_k}$ and centroids $\hat{\theta}_1, \ldots, \hat{\theta}_{r_k} \in \mathbb{R}^{r_k}$
(for example with $k$-means++ \cite{Arthur_Vassilvitskii_2007})
such that
\[
\sum_{j=1}^{n_k} \|(\hat{U} \hat{\Lambda})_{j:} - \hat{\theta}_{\hat{z}_j}\|^2
\leq Q \min_{\breve{z}, \breve{\theta}} \sum_{j=1}^{n_k} \|(\hat{U} \hat{\Lambda})_{j:} - \breve{\theta}_{\breve{z}_j}\|^2
\]
\end{algorithmic}
\end{algorithm}

To motivate Algorithm~\ref{alg:Spektraaliklusterointi},
consider a sample $\cY$ from $\TBM(\rho,\cS,z_1,\dots,z_d)$ defined by~\eqref{eq:TBM}.
Define membership matrices $Z^{(k)}\in\set{0,1}^{n_k\times r_k}$ by $Z_{i_kj_k}^{(k)}=\I(z_k(i_k)=j_k)$ for each mode $k=1,\dots,d$. That is, $Z_{i_kj_k}^{(k)}$ indicates whether or not the cluster of the index $i_k$ along the mode $k$ is $j_k$. Then the signal tensor admits a Tucker decomposition
\[
\cX
= \rho\cS\times_1 Z^{(1)}\times_2\dots\times_d Z^{(d)}.
\]
From linear algebraic perspective, clustering amounts to inferring a low-rank Tucker decomposition from a perturbed data tensor. This motivates to estimate $\cX$ by approximating $\cY$ with a low-rank Tucker decomposition, i.e., to minimize the distance
\begin{equation}
 \label{eq:objective function}
 \fronorm{\cY - \breve{\cS}\times_1\breve{U}^{(1)}\times_2\dots\times_d\breve{U}^{(d)}},
\end{equation}
where the matrices $\breve{U}^{(k)}\in\R^{n_k\times r_k}$ are imposed to have orthonormal columns and $\breve{\cS}\in\R^{r_1\times\dots\times r_d}$ is a tensor. Unfortunately, minimizing \eqref{eq:objective function} is known to be NP-hard in general, already when the core dimensions are ones \cite{Hillar_Lim_2013}. Nonetheless, fast algorithms have been developed to approximately minimize \eqref{eq:objective function}. These include higher-order singular value decomposition (HOSVD) \cite{DeLathauwer_DeMoor_Vandewalle_2000}, higher-order orthogonal iteration (HOOI) \cite{DeLathauwer_DeMoor_Vandewalle_2000b} and simple variations of these such as sequentially truncated HOSVD \cite{Vannieuwenhoven_Vandebril_Meerbergen_2012}. These are based on observing that the matricization $\mat_k\cY$ is a noisy version of its expectation
\[
\mat_k\cX
\ =\ Z^{(k)}\mat_k\left(\rho\cS\times_1 Z^{(1)}\times_2\dots\times_{k-1}Z^{(k-1)}\times_{k+1}Z^{(k+1)}\times_{k+2}\dots\times_d Z^{(d)}\right)
\]
that has rank at most $r_k$. This motivates to estimate $\mat_k\cX$ with a rank-$r_k$ approximation of $\mat_k\cY$, or alternatively, estimate $(\mat_k\cX)(\mat_k\cX)^\top$ with a rank-$r_k$ approximation of  $(\mat_k\cY)(\mat_k\cY)^\top$. HOSVD repeats this rank approximation over each mode and HOOI iterates
this type of computation multiple times. Algorithm~\ref{alg:Spektraaliklusterointi}, in turn, essentially does this rank approximation only once before the final clustering step. Although simple, this step is crucial for HOSVD as it provides the very first initialization.

After dimension reduction, Algorithm~\ref{alg:Spektraaliklusterointi} clusters the low-dimensional vectors by solving $k$-means minimization task quasi-optimally. Exact minimization is known to be NP-hard in general \cite{Drineas_etal_2004}, but quasi-optimal solutions can be found with fast implementations such as $k$-means++ with quasi-optimality constant $\mathcal{O}(\log r)$, where $r$ is the number of clusters \cite{Arthur_Vassilvitskii_2007}. Since $k$-means++ is a randomized algorithm, it is guaranteed to be quasi-optimal only in expectation rather than always.

Two key differences distinguish Algorithm~\ref{alg:Spektraaliklusterointi} from standard spectral clustering. First, the diagonal entries of the Gram matrix $(\mat_k\cY)(\mat_k\cY)^\top$ are zeroed giving a hollow Gram matrix. This modification has been considered already in \cite{Lei_Lin_2022}. Second, the obtained hollow Gram matrix is trimmed by removing carefully selected rows and columns. In the case of nonnegative data, those with too large $L_1$-norms are selected.

\subsection{Consistency}

The following theorem presents the main result of the paper. It confirms the weak consistency of Algorithm~\ref{alg:Spektraaliklusterointi} for data sampled from the sparse integer-valued TBM
defined by~\eqref{eq:TBM}.

\begin{theorem}[Weak consistency]
\label{thm:Weak consistency}
Assume that $\cY \in \Z^{n_1 \times \dots \times n_d}$ is sampled from
$\TBM(\rho,\cS,z_1,\dots,z_d)$ 
with $r_k\ll n_k^{1/3}$ clusters in mode $k$, and density of order
\begin{equation}
 \label{eq:weak consistency regime}
 \frac{r_k^{3/2}}{\sqrt{n_1\cdots n_d}}\vee\frac{n_k\log n_k}{n_1\cdots n_d}
 \wll \rho
 \wll n_k^{-1}(n_1\cdots n_d/n_k)^{-\eps}
\end{equation}
for some constant $\eps>0$.
Assume that the mode-$k$ clusters are separated by
$\de_k \asymp 1$ and balanced by $\al_k \asymp 1$,
and the data entries are sub-Poisson with dispersion parameter $\rho$.
Then there exists a universal constant $C$ such that the estimated cluster membership vector $\hat{z}_k$ given by
Algorithm~\ref{alg:Spektraaliklusterointi} on mode $k$ with
trimming parameter $\tau = \Ctrim \rho^2 n_1 \cdots n_d$
where $\Ctrim \ge C \vee \eps^{-1}$
is weakly consistent.
Furthermore,
\begin{align*}
&\Pr\left(
\loss(z_k,\hat{z}_k)\ge CQ
\left(\frac{r_k\sqrt{\al_k}}{\de_k^2\prod_{k'}\al_{k'}}
\left(
\sqrt{s} + \frac{\varepsilon^{-3}+\Ctrim}{\sqrt{n_1\cdots n_d}\rho} + \frac{1}{n_k}
\right)
\right)^2
\right) \\
&\qquad\leq \frac{1}{s}\left(\sqrt{\Ctrim}e^{-n_1\cdots n_d\rho^2} + \frac{3}{n_k}\right) + \frac{C}{n_k},
\end{align*}
holds eventually, when $s$ is chosen to satisfy $e^{-n_1\cdots n_d\rho^2}+n_k^{-1} \ll s \ll r_k^{-3}$.
\end{theorem}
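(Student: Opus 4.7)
The plan is to follow the standard spectral clustering analysis paradigm adapted to tensor matricizations with sub-Poisson noise and trimming: signal/noise decomposition, a concentration bound for the hollow sample Gram matrix, Davis–Kahan perturbation, and a quasi-optimal $k$-means argument, combined with a Markov-style step to convert an expected error bound into the stated tail bound.

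First I would set up the signal and noise decomposition. Let $X=\mat_k(\cX)$ and $N=\mat_k(\cY)-X$, so $Y=X+N$. The Tucker form of $\cX$ gives the factorization $X=\rho\, Z^{(k)} M$, where $M$ is the mode-$k$ matricization of the core tensor multiplied by the remaining cluster indicator matrices. Hence the population Gram matrix $XX^{\top}=\rho^{2} Z^{(k)} MM^{\top}(Z^{(k)})^{\top}$ has rank at most $r_k$, and I would explicitly bound its $r_k$-th eigenvalue from below in terms of $\rho$, the separation $\delta_k$, the balance $\alpha_k$, and the product $n_1\cdots n_d$, roughly of order $\rho^{2}\delta_k^{2}\alpha_k (n_1\cdots n_d)/r_k^{2}$. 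The $r_k$ distinct population centroids of the embedding $U\Lambda$ inherit a separation controlled by the same quantities.

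The main step, and the place where I expect most of the technical difficulty to sit, is the spectral concentration bound for the hollow trimmed Gram matrix, i.e.\ the claim that on a high-probability event
\[
\bigl\| A-\bigl(XX^{\top}-\mathrm{diag}(XX^{\top})\bigr)\bigr\|_{\mathrm{sp}}
\wll \sigma_{r_k}(XX^{\top}).
\]
After expansion, $YY^{\top}-XX^{\top}=XN^{\top}+NX^{\top}+NN^{\top}$ (modulo the diagonal we subtract off). The linear-in-noise terms $XN^{\top}$ should be controllable by matrix Bennett/Bernstein inequalities for sub-Poisson variables available via Lemma \ref{app:lem:AX^T bound} and \SPcite{Bennett}. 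The quadratic term $NN^{\top}-\E NN^{\top}$ is the main obstacle: entries are bilinear forms in heavy-tailed coordinates, so a standard matrix concentration argument fails, and this is precisely what the trimming step with threshold $\tau=\Ctrim\rho^{2}n_1\cdots n_d$ is designed to fix. I would invoke the novel concentration inequality for sparse random Gram matrices (Theorem \ref{app:thm:hollow gram matrix bound}) established in Appendix \ref{app:thm:hollow gram matrix bound}, whose failure probability is of order $\sqrt{\Ctrim}\,e^{-n_1\cdots n_d\rho^{2}}+n_k^{-1}$; a Markov step at level $s$ will then produce the factor $1/s$ in the stated probability bound, with the constraint $e^{-n_1\cdots n_d\rho^{2}}+n_k^{-1}\ll s\ll r_k^{-3}$ arising to guarantee that the $s$-inflated noise is still dominated by the signal eigengap.

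Given the spectral concentration bound, Davis–Kahan (in the $\sin\Theta$ form for the $r_k$-th gap between the nonzero eigenvalues of $XX^{\top}$ and the zero bulk) yields
\[
\|\hat U\hat\Lambda - U\Lambda O\|_{\mathrm F}
\lesssim \sqrt{r_k}\,
\frac{\|A-(XX^{\top}-\mathrm{diag}(XX^{\top}))\|_{\mathrm{sp}}}{\sigma_{r_k}(XX^{\top})}
\]
for some orthogonal $O$, which gives a row-wise $L^{2}$ error of the embedding. Because the population embedding $U\Lambda$ takes only $r_k$ distinct values (one per cluster), a standard $k$-means misclassification lemma of Lei–Rinaldo type converts this $L^{2}$ bound into a bound on $\loss(z_k,\hat z_k)$, with the quasi-optimality constant $Q$ entering multiplicatively. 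Substituting the eigenvalue estimate and the concentration bound, then simplifying in the density regime \eqref{eq:weak consistency regime}, should give exactly the quantitative bound in the theorem; weak consistency follows by choosing any admissible $s$ and letting $\nu\to\infty$, using \eqref{eq:weak consistency regime} to ensure the bracketed term vanishes.
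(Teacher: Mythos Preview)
Your outline has the right ingredients but two points deserve correction.

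\textbf{Role of the parameter $s$.} You describe the concentration theorem for the trimmed hollow Gram matrix as having failure probability of order $\sqrt{\Ctrim}\,e^{-n_1\cdots n_d\rho^{2}}+n_k^{-1}$, with a Markov step at level $s$ producing the $1/s$ factor. That is not how $s$ enters. In the paper, $s$ is the \emph{fraction of trimmed indices}: Lemma~\ref{lem:hollow gram matrix row L1 bound} applies Markov's inequality to $\frac{1}{n_k}\sum_i(1-\xi_i)$ and yields
\[
\Pr\!\left(\tfrac{1}{n_k}\norm{1-\xi}_1\ge s\right)\wle \tfrac{1}{s}\bigl(\sqrt{\Ctrim}\,e^{-n_1\cdots n_d\rho^{2}}+3n_k^{-1}\bigr).
\]
The concentration bound (Theorem~\ref{thm:hollow gram matrix bound}) is a \emph{separate} event with probability $Cn_k^{-1}$, contributing the additive $C/n_k$ in the final bound. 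Crucially, the $\sqrt{s}$ in the error term does not come from ``$s$-inflated noise'' but from \emph{signal loss due to trimming}: on the event that at most an $s$-fraction is masked, $\fronorm{1-N}^{2}\le 2sn_k^{2}$ and hence $\spenorm{(XX^{\top})\odot(1-N)}\le\sqrt{2s}\,n_1\cdots n_d\rho^{2}$. You omit this step entirely, yet it is what produces the $\sqrt{s}$ inside the bracket. The constraint $s\ll r_k^{-3}$ is then what guarantees the minimum cluster size condition in the $k$-means lemma.

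\textbf{Davis--Kahan versus the paper's route.} The paper does not use Davis--Kahan or a spectral gap. Theorem~\ref{thm:k-means} works directly with the best rank-$r_k$ approximation $\hat X$ and bounds $\fronorm{\hat X-XX^{\top}}\le\sqrt{8r_k}\,\spenorm{A-XX^{\top}}$ via Eckart--Young--Mirsky and Weyl; the signal strength is measured by the \emph{row separation} $\Delta=\min_{z_k(i)\ne z_k(j)}\norm{(XX^{\top})_{i:}-(XX^{\top})_{j:}}$ (Lemma~\ref{lem:separation of clusters}), not by $\sigma_{r_k}(XX^{\top})$. This matters: the assumption $\delta_k\asymp 1$ says only that the rows of $\mat_k\cS$ are pairwise separated, which does not force $\mat_k\cS$ to have rank $r_k$; if the rank is strictly smaller, then $\sigma_{r_k}(XX^{\top})=0$ and your Davis--Kahan step gives nothing. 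The paper's argument is insensitive to this degeneracy. Your approach can be repaired (e.g.\ by arguing that the extra $r_k-\operatorname{rank}(XX^{\top})$ columns of $\hat U\hat\Lambda$ carry eigenvalues of size at most $\spenorm{A-XX^{\top}}$), but the lower bound you propose for $\sigma_{r_k}(XX^{\top})$ is not available under the stated hypotheses.
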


Under extra assumptions $n_1=\dots =n_d=n$ and $r_k\asymp 1$, Condition \eqref{eq:weak consistency regime} is equivalent to $d\ge 3$ and $n^{-d/2}\ll\rho\ll n^{-1-\eps(d-1)}$.

\subsection{Aggregation}

A common method in data analysis 
is to aggregate a tensor
$\cY \in \Z^{n_1\times\dots\times n_d}$ into a lower-order tensor 
$\cY' \in \Z^{n_1\times\dots\times n_{d'}}$ with entries
\begin{equation}
 \label{eq:AggregateTensor}
 \cY_{i_1\dots i_{d'}}'
 \weq \sum_{i_{d'+1},\dots,i_d}\cY_{i_1\dots i_d}.
\end{equation}
Theorem~\ref{thm:aggregated data} shows that the class of integer-valued TBMs is closed under aggregation.
More importantly, the Bennett-type variance proxy scales smoothly during aggregation.
As a consequence, Corollary~\ref{thm:Weak consistency, aggregated data} shows that
aggregating is beneficial for sparse data as long as the signal tensor remains well separated.

\begin{theorem}[Aggregation]\label{thm:aggregated data}
Let $\cY\in \Z^{n_1\times\dots\times n_d}$ be a sample from
$\TBM(\rho,\cS,z_1,\dots,z_d)$
having sub-Poisson entries with dispersion parameter $\rho$.
Then the aggregate data tensor $\cY' \in \Z^{n_1\times\dots\times n_{d'}}$
given by \eqref{eq:AggregateTensor}
is a sample from $\TBM(\rho',\cS',z_1,\dots,z_{d'})$ with density
\[
 \rho' = n_{d'+1}\cdots n_d\rho
\] 
and core tensor
\begin{equation}
 \label{eq:AggregateCoreTensor}
 \cS_{j_1\dots j_{d'}}'
 = \frac{1}{n_{d'+1} \cdots n_d} \sum_{i_{d'+1},\dots,i_d}
 \cS_{j_1\dots j_{d'}z_{d'+1}(i_{d'+1})\dots z_d(i_d)},
\end{equation} 
in which the entries are sub-Poisson with dispersion parameter $\rho'$.
\end{theorem}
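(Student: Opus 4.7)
The proof splits into three ingredients: matching the mean of $\cY'$ with the claimed TBM parameters, verifying independence of the aggregated entries, and propagating the sub-Poisson tail.

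For the mean structure, linearity of expectation yields
\[
 \E \cY'_{i_1\dots i_{d'}}
 \weq \sum_{i_{d'+1},\dots,i_d} \E \cY_{i_1\dots i_d}
 \weq \rho \sum_{i_{d'+1},\dots,i_d} \cS_{z_1(i_1)\dots z_d(i_d)},
\]
which coincides with $\rho'\,\cS'_{z_1(i_1)\dots z_{d'}(i_{d'})}$ under the identifications $\rho'=n_{d'+1}\cdots n_d\,\rho$ and \eqref{eq:AggregateCoreTensor}. Since each entry of $\cS'$ is a convex combination of entries of $\cS\in[-1,1]^{r_1\times\dots\times r_d}$, it lies in $[-1,1]$, so $\cS'$ is a valid core tensor. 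Independence of the entries of $\cY'$ is immediate: distinct aggregated entries are sums over disjoint blocks of indices of the jointly independent entries of $\cY$.

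The main point is preserving the sub-Poisson property with the claimed dispersion. Each summand $\cY_{i_1\dots i_d}$ satisfies the MGF bound \eqref{eq:SubPoissonMain} with variance proxy $\rho$ and the mean absolute deviation bound \eqref{eq:NoiseL1}. By independence, the MGF of the centered aggregate factorizes, so for every $\lambda\in\R$,
\[
 \E e^{\lambda(\cY'_{i_1\dots i_{d'}} - \E \cY'_{i_1\dots i_{d'}})}
 \weq \prod_{i_{d'+1},\dots,i_d} \E e^{\lambda(\cY_{i_1\dots i_d} - \E \cY_{i_1\dots i_d})}
 \wle \exp\bigl\{\rho'(e^{\abs{\lambda}}-1-\abs{\lambda})\bigr\},
\]
which is precisely \eqref{eq:SubPoissonMain} with variance proxy $\rho'$. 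Combining the triangle inequality with \eqref{eq:NoiseL1} applied to each summand gives
\[
 \E \bigabs{\cY'_{i_1\dots i_{d'}} - \E \cY'_{i_1\dots i_{d'}}}
 \wle \sum_{i_{d'+1},\dots,i_d} \E \abs{\cY_{i_1\dots i_d} - \E \cY_{i_1\dots i_d}}
 \wle n_{d'+1}\cdots n_d\,\rho
 \weq \rho',
\]
so the entries of $\cY'$ are sub-Poisson with dispersion parameter $\rho'$. Alternatively, this step can be short-circuited by invoking the independent-sum closure property of sub-Poisson distributions \SPcite{independent sum}.

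There is no real obstacle: the argument is essentially bookkeeping, the substantive observation being that the Bennett-type variance proxy is additive across independent summands, which is precisely the feature that makes the sub-Poisson class stable under aggregation. The only place demanding care is the re-indexing needed to align \eqref{eq:AggregateCoreTensor} with the expanded expression for $\E \cY'$.
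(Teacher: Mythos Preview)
Your proof is correct and follows the same approach as the paper's, just with more detail filled in: the paper's proof is a two-line argument that cites \SPcite{independent sum} for the MGF bound and the triangle inequality for the $L^1$ bound, leaving the mean structure, independence, and the $[-1,1]$ range of $\cS'$ implicit. Your explicit verification of these points and the unrolled MGF computation are exactly what the cited closure property packages.
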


\begin{proof}
The sub-Poisson property follows by 
\SPcite{independent sum}.
By the triangle inequality, $\E\abs{\mathcal{E}_{i_1\dots i_{d'}}'}=\E\abs{\sum_{i_{d'+1},\dots,i_d}\mathcal{E}_{i_1\dots i_d}}\le n_{d'+1}\cdots n_d\rho$. 
\end{proof}

\begin{corollary}[Weak consistency in Aggregated TBM]
\label{thm:Weak consistency, aggregated data}
Assume that $\cY \in \Z^{n_1 \times \dots \times n_d}$ is sampled from
$\TBM(\rho,\cS,z_1,\dots,z_d)$ 
with $r_k\ll n_k^{1/3}$ clusters in mode $k$, and density of order
\begin{equation}
 \label{eq:weak consistency regime, aggregated}
 \frac{r_k^{3/2}}{\sqrt{n_1\cdots n_{d'}}n_{d'+1}\cdots n_d}\vee\frac{n_k\log n_k}{n_1\cdots n_d}
 \wll \rho
 \wll \frac{1}{n_kn_{d'+1}\cdots n_{d}(n_1\dots n_{d'}/n_k)^{\eps}}
\end{equation}
for some constant $\eps>0$.
Assume that clusters of mode $k$ are balanced by $\al_k \asymp 1$,
and the aggregate core tensor $\cS'$ defined by \eqref{eq:AggregateCoreTensor}
has mode-$k$ separation $\de_k' \asymp 1$.
Assume that the entries of $\cY$ are sub-Poisson with dispersion parameter $\rho$.
Then there exists a universal constant $C$ such that the Algorithm~\ref{alg:Spektraaliklusterointi} applied to mode $k$ of the aggregate tensor $\cY'$ with
trimming parameter $\tau = \Ctrim \rho^2 n_1 \cdots n_d$
where $\Ctrim \ge C \vee \eps^{-1}$
is weakly consistent.
\end{corollary}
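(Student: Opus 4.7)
The plan is to derive Corollary~\ref{thm:Weak consistency, aggregated data} as a direct application of Theorem~\ref{thm:Weak consistency} to the aggregated tensor $\cY'$, with the distributional structure of $\cY'$ supplied by Theorem~\ref{thm:aggregated data}.

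First, I would apply Theorem~\ref{thm:aggregated data} to $\cY$ to identify $\cY'$ as a sample from $\TBM(\rho',\cS',z_1,\dots,z_{d'})$ with
\[
 \rho' \weq n_{d'+1}\cdots n_d\,\rho
\]
and aggregate core tensor $\cS'$ given by \eqref{eq:AggregateCoreTensor}, whose entries have deviations that are sub-Poisson with dispersion parameter $\rho'$. The mode-$k$ cluster membership vector $z_k$ and its balance coefficient $\alpha_k$ are inherited unchanged, while the mode-$k$ separation of $\cS'$ is by assumption of order $1$.

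Next, I would verify the hypotheses of Theorem~\ref{thm:Weak consistency} for the aggregate model $(n_1,\dots,n_{d'};\,\rho',\cS',z_1,\dots,z_{d'})$ on mode $k$. The cluster count bound $r_k\ll n_k^{1/3}$ and the balance/separation conditions are immediate. The key check is the density window
\[
 \frac{r_k^{3/2}}{\sqrt{n_1\cdots n_{d'}}}\vee\frac{n_k\log n_k}{n_1\cdots n_{d'}}
 \wll \rho'
 \wll n_k^{-1}(n_1\cdots n_{d'}/n_k)^{-\eps}
\]
required by Theorem~\ref{thm:Weak consistency}. Substituting $\rho'=(n_{d'+1}\cdots n_d)\rho$ and using the factorization $n_1\cdots n_d=(n_1\cdots n_{d'})(n_{d'+1}\cdots n_d)$, this window reduces by a short algebraic manipulation to the assumed regime \eqref{eq:weak consistency regime, aggregated}.

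Finally, Theorem~\ref{thm:Weak consistency} applied to $\cY'$ on mode $k$ with the appropriate trimming threshold yields weak consistency of $\hat z_k$. The main obstacle is essentially bookkeeping: carefully translating the density, the dimensional factors and the trimming parameter $\tau$ from the scaling of $\cY$ to the rescaled parameters that govern $\cY'$, and verifying that the resulting configuration lies inside the admissible region of Theorem~\ref{thm:Weak consistency}. Once these substitutions are performed, the corollary follows immediately.
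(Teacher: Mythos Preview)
Your proposal is correct and follows exactly the paper's approach: the paper's proof is the single sentence ``The claim follows from Theorem~\ref{thm:Weak consistency} which is applicable by Theorem~\ref{thm:aggregated data},'' and you have simply unpacked the parameter substitutions $\rho'=n_{d'+1}\cdots n_d\,\rho$ and $n_1\cdots n_{d'}$ that make this application go through. Your verification that the density window~\eqref{eq:weak consistency regime, aggregated} translates to the window~\eqref{eq:weak consistency regime} for the aggregate model is correct.
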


\begin{proof}
The claim follows from Theorem \ref{thm:Weak consistency} which is applicable by Theorem \ref{thm:aggregated data}.
\end{proof}

\begin{example}
Under extra assumptions $n_1=\dots =n_d=n$ and $r_k\asymp 1$, condition \eqref{eq:weak consistency regime, aggregated} is equivalent to $n^{-(d-d'/2)}\ll\rho\ll n^{-(1+d-d')-\eps(d'-1)}$, where the lower bound is orders of magnitude smaller than $n^{-d/2}$ for the nonaggregated model in Theorem \ref{thm:Weak consistency}. That is, aggregating allows to handle much sparser data as it improves the density threshold by a factor $n^{-1/2}$ per aggregated mode. However,
the cluster separation condition \eqref{eq:ClusterSeparation} for the aggregated TBM is stronger as it considers separation in $\cS'$.
\end{example}

\section{Numerical experiments}\label{sec:Numerical experiments}

This section presents numerical experiments to demonstrate the performance of Algorithm~\ref{alg:Spektraaliklusterointi} in various sparsity regimes.

\subsection{Setup}


We focus on a three-mode Bernoulli $\TBM(\rho,\cS,z,z,z)$ with $r=2$ clusters of equal size, where 
every mode has the same membership vector $z \in \set{1,2}^n$ (see Example~\ref{ex:Bernoulli TBM}).
We consider two core tensors (see Fig.~\ref{fig:S tensor})
\begin{align*}
\cS_{\rm uninformative}
= 
\begin{bmatrix}
	1 & 0 & 0 & 1 \\
	0 & 1 & 1 & 0
\end{bmatrix},
\qquad
\cS_{\rm informative} 
= 
\begin{bmatrix}
	1 & 0 & 0 & 0 \\
	0 & 0 & 0 & 1
\end{bmatrix},
\end{align*}
represented as $\cS = [ \cS_{::1} \ \cS_{::2}]$.
Both are symmetric, so that the mode chosen for matricization does not matter.
Recall that aggregating one mode yields $\TBM(\rho',\cS',z,z)$
with a density parameter $\rho'=n\rho$ and
a core tensor $\cS_{l_1l_2}' = \frac{1}{n}\sum_{i_3}\cS_{l_1l_2z(i_3)} = \frac{1}{2}(\cS_{l_1l_21} + \cS_{l_1l_22})$ (Theorem \ref{thm:aggregated data}).
In this case we obtain
\begin{align*} 
\cS_{\rm uninformative}'
= 
\begin{bmatrix}
	1/2 & 1/2 \\
	1/2 & 1/2
\end{bmatrix},
\qquad
\cS_{\rm informative}'
= 
\begin{bmatrix}
	1/2 & 0 \\
	0 & 1/2
\end{bmatrix},
\end{align*}
and we see that the former, having constant rows, is uninformative for clustering.

\begin{figure}
\centering
\includegraphics[trim={2cm 3cm 2cm 3cm}, clip,scale=0.9]{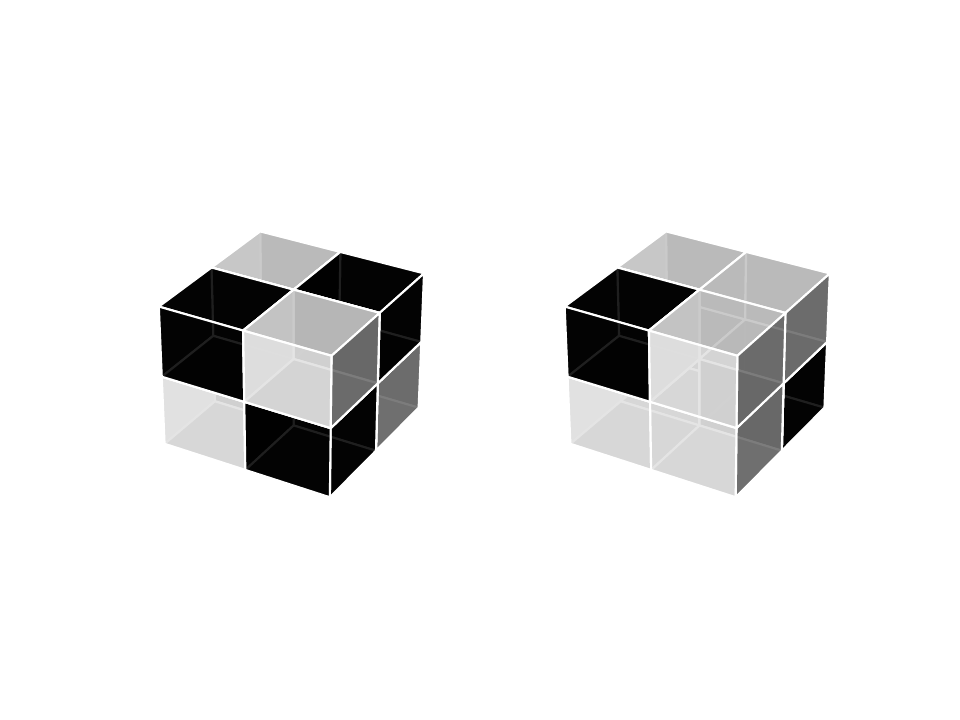}
\caption{
Core tensors $\cS_{\rm uninformative}$ (left) and $\cS_{\rm informative}$ (right) used in simulations, with black = 1, white = 0.
}\label{fig:S tensor}
\end{figure}

Different algorithms are evaluated over a grid of pairs $(n,\rho)$, with both parameters spaced logarithmically. For each pair, we record the proportion of correctly clustered nodes. Since the predicted phase transition follows $\rho = C n^{-\gamma}$, this appears as a line 
$
\log \rho = C - \gamma \log n
$
with slope $-\gamma$ in logarithmic coordinates.
To estimate the empirical transition, we fit a line in $(\log n,\log \rho)$-space using a logistic regression: we first label each run according to whether the clustering accuracy exceeds a chosen threshold (e.g.\ $0.9$), and then learn the decision boundary in log--log coordinates. Although this is not necessarily an optimal estimator of the true phase transition, it provides a visually clear proxy. For comparison, we also plot the theoretical line with slope $-\gamma$ to assess
the agreement between the simulated and predicted transitions.

The following four tensor clustering algorithms are compared:
\begin{enumerate}[(i)]
\item \textsc{Hollow SVD}. Algorithm~\ref{alg:Spektraaliklusterointi} with trimming threshold $3 n^3 \rho^2$.
By Theorem \ref{thm:Weak consistency}, we expect the phase transition line to have $\gamma=1.5$.

\item \textsc{Vanilla SVD}. The same as Algorithm~\ref{alg:Spektraaliklusterointi} but without trimming and diagonal resetting.
We expect the phase transition line to have $\gamma\approx 1.33$. Namely, the diagonal entries of the Gram matrix $(\mat_1\cY)(\mat_1\cY)^\top$ have standard deviations
\[
\begin{split}
	\SD(((\mat_1\cY)(\mat_1\cY)^\top)_{i_1i_1})
	&= \SD\left(\sum_{i_2,i_3}\cY_{i_1i_2i_3}^2\right) 
    \asymp n\sqrt{\rho},
\end{split}
\]
which dominate the spectral norm of the signal Gram matrix
\[
\begin{split}
	\spenorm{(\mat_1\cX)(\mat_1\cX)^\top}
	&\overset{(*)}{\asymp} \fronorm{(\mat_1\cX)(\mat_1\cX)^\top}
	\asymp n^3\rho^2,
\end{split}
\]
when $\rho\ll n^{-4/3}$. Here $(*)$ follows from $\spenorm{A}\leq\fronorm{A}\leq\sqrt{\operatorname{rank}(A)}\spenorm{A}$.

\item \textsc{HSC}. A simplification of high-order spectral clustering (HSC) proposed in \cite{Han_Luo_Wang_Zhang_2022}. First, HSC calculates a low-rank approximation of the data tensor with HOSVD~\cite{DeLathauwer_DeMoor_Vandewalle_2000} and essentially one iteration of HOOI \cite{DeLathauwer_DeMoor_Vandewalle_2000b}. In our simplification, HOSVD matricizes the data tensor and calculates its SVD only once since as the underlying signal tensor is symmetric.
This will slightly speed up the computations.
Then the algorithm clusters the first mode of the low-rank tensor with $k$-means++.
Since HSC is initialized with a simple rank approximation (via eigenvalue decomposition of $(\mat_1\cY)(\mat_1\cY)^\top$),
it is expected not to improve $\gamma$. That is, we expect $\gamma\approx 1.33$.

\item \textsc{Aggregate SVD}. Spectral clustering from an aggregate matrix, a simplified version of the algorithm proposed in \cite{Zhang_Tan_2023}. We compute the aggregate matrix $A_{ij}=\sum_{k}\cY_{ijk}$,
remove rows and columns with too large $L_1$ norms (trimming threshold $3 n^2 \rho$),
calculate the best rank-$2$ approximation,
and then cluster the rows with $k$-means++ \cite{Arthur_Vassilvitskii_2007}.
In the original paper, the data tensor is symmetric, which leads to minor differences in how $A$ is formed. The clustering step also does not use $k$-means++, and the algorithm includes an additional refinement stage. Nonetheless, the approaches are conceptually very similar. If the aggregate matrix is informative, we expect the phase transition to have $\gamma = 2$.
%
\end{enumerate}

\subsection{Results}

Figure \ref{fig:algoritmivertailu1} shows a comparison of the four algorithms when the aggregate matrix is uninformative.
The fitted phase transitions slopes are relatively close to their theoretical values.
We see that \textsc{Hollow SVD} has the highest accuracy, with estimated $\hat\ga = 1.43$ not far from the theoretical
value $\ga=1.5$. 
In contrast, \textsc{Vanilla SVD} and \textsc{HSC} have lower $\ga$ values close to 1.33.
The refinement steps in HSC do not improve the exponent $\gamma$ compared to the \textsc{Vanilla SVD}, but they do improve the clustering performance significantly. 
As expected, \textsc{Aggregate SVD} gives poor results overall.

\begin{figure}[h]
\centering
\includegraphics[scale=0.45]{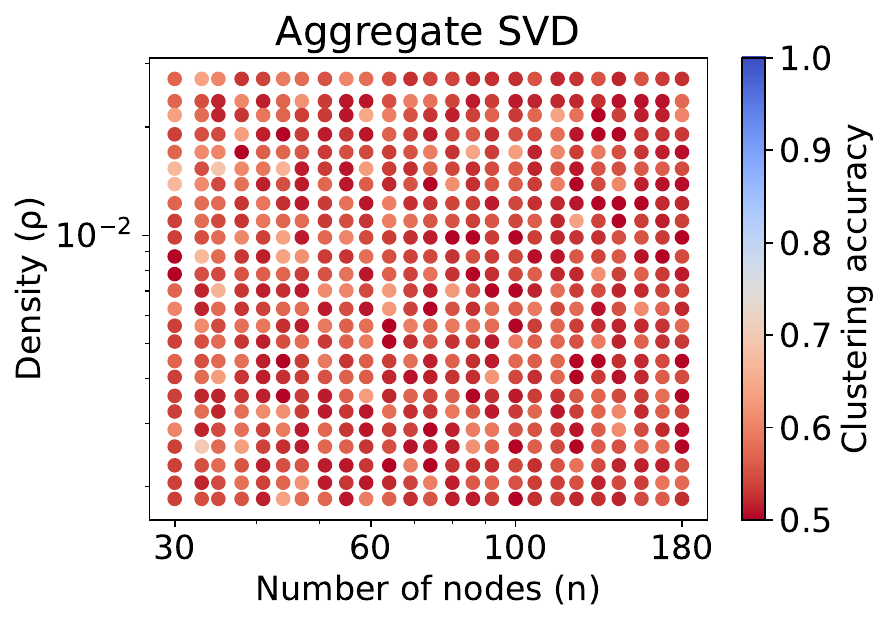}\hfill
\includegraphics[scale=0.45]{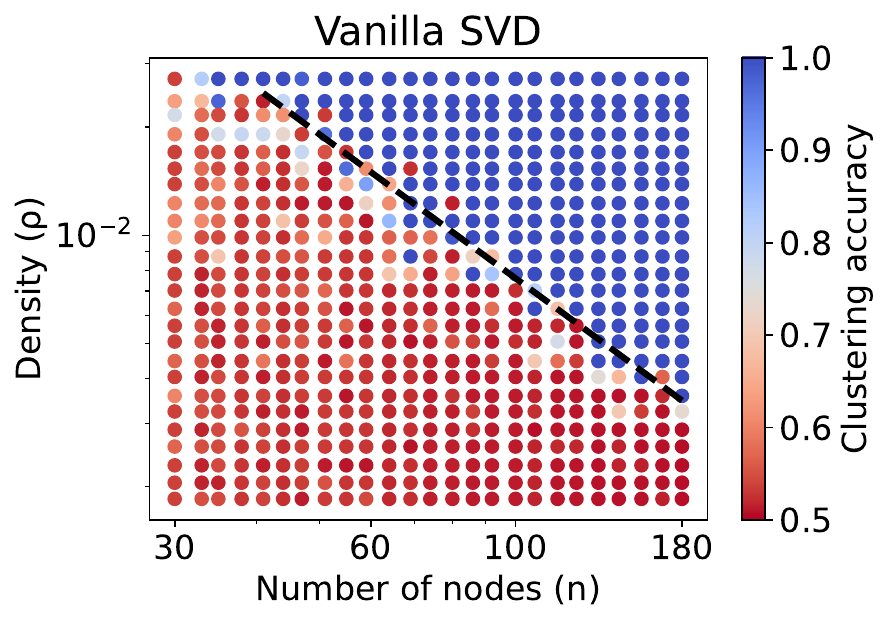} 
\newline\newline
\includegraphics[scale=0.45]{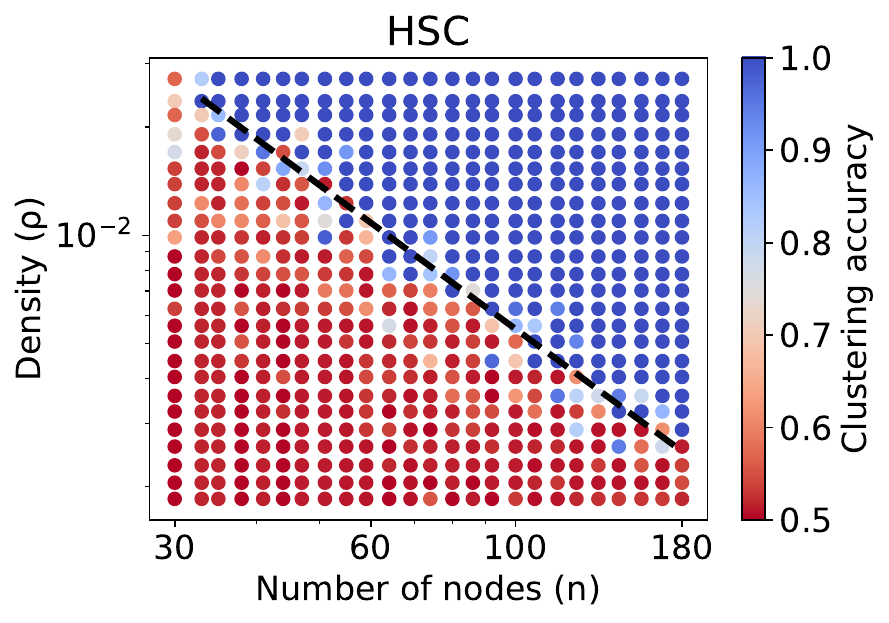}\hfill
\includegraphics[scale=0.45]{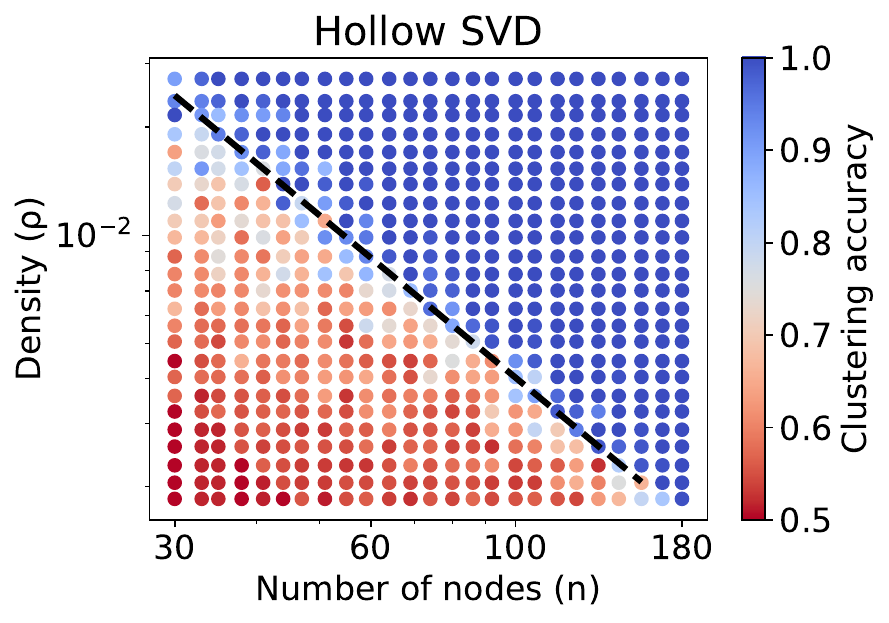}
\caption{Comparison of clustering algorithms when the aggregate matrix is uninformative. The number of nodes varies logarithmically between $30$ and $180$, and the density parameter varies logarithmically between $0.002$ and $0.027$. The dashed black line shows the theoretical phase-transition boundary corresponding to 
$\gamma = 1.33$, $1.33$, and $1.5$ for \textsc{Vanilla SVD}, \textsc{HSC}, and 
\textsc{Hollow SVD}, respectively. The corresponding estimated values are $1.29$, $1.26$, and $1.43$.
}\label{fig:algoritmivertailu1}
\end{figure}

Figure \ref{fig:algoritmivertailu2} shows a comparison of the four algorithms when the aggregate matrix is informative.
Again, the fitted $\ga$ values match well with the theoretical counterparts.
As expected from the above discussion, \textsc{Aggregate SVD} gives the best results in this case.

\begin{figure}[h]
\centering
\includegraphics[scale=0.45]{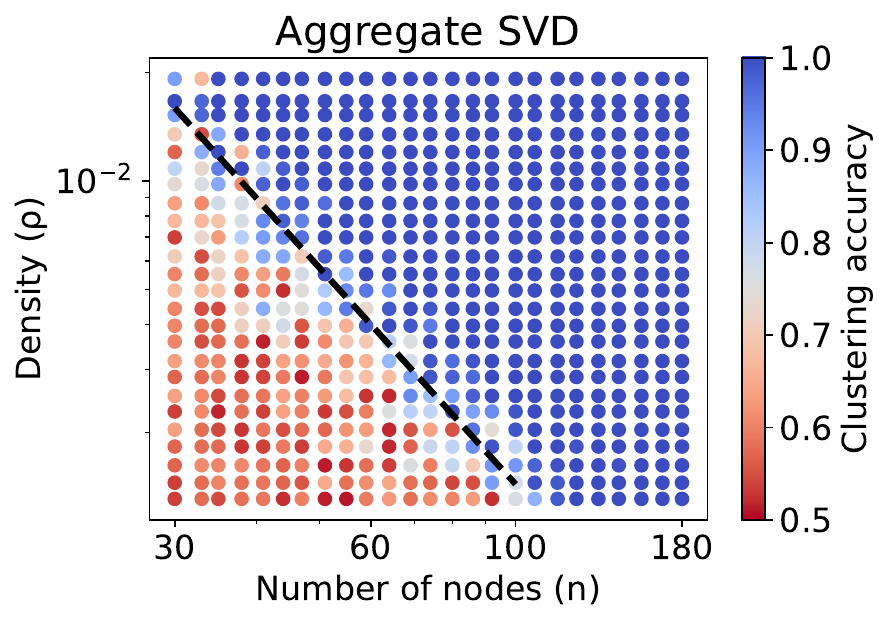}\hfill
\includegraphics[scale=0.45]{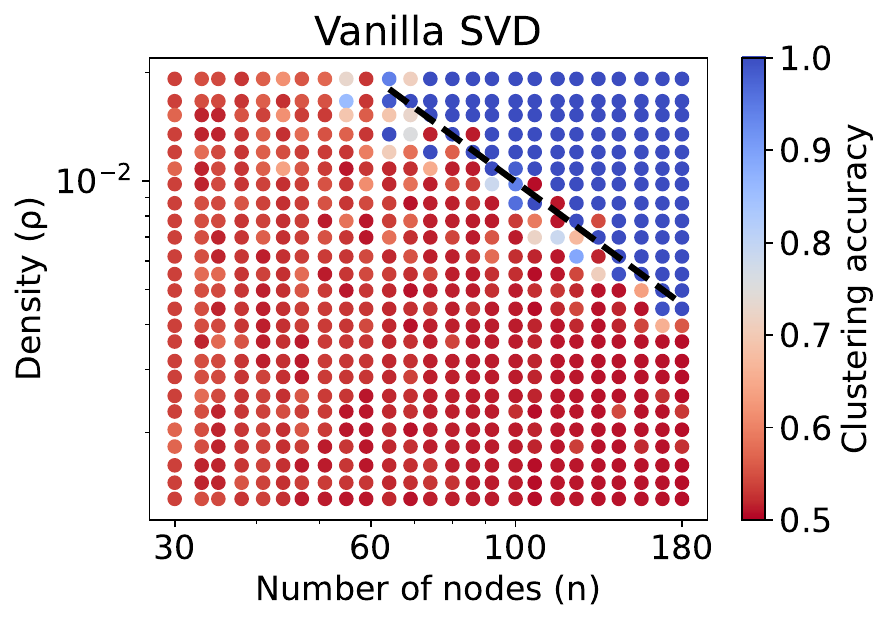} 
\newline\newline
\includegraphics[scale=0.45]{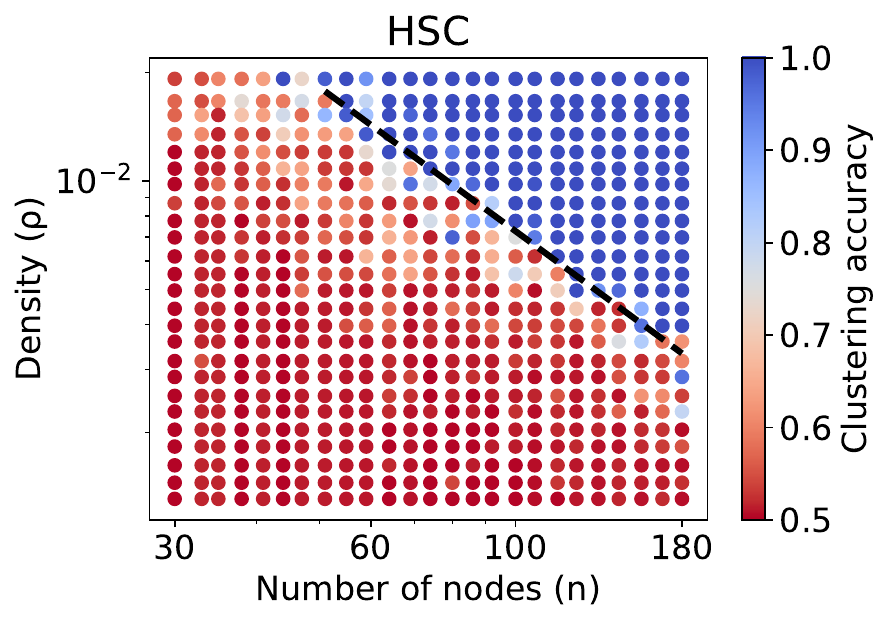}\hfill
\includegraphics[scale=0.45]{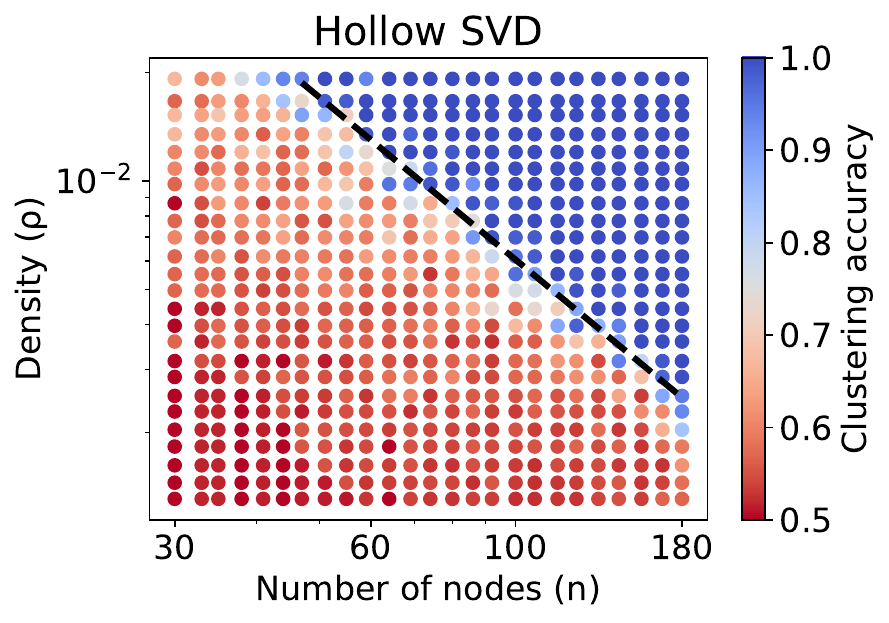}
\caption{Comparison of clustering algorithms when the aggregate matrix is uninformative. The number of nodes varies logarithmically between $30$ and $180$, and the density parameter varies logarithmically between $0.001$ and $0.019$. The dashed black line shows the theoretical phase-transition boundary corresponding to 
$\gamma = 2$, $1.33$, $1.33$ and $1.5$ for \textsc{Aggregate SVD}, \textsc{Vanilla SVD}, \textsc{HSC} and \textsc{Hollow SVD}, respectively. The corresponding estimated values are $2.00$, $1.31$, $1.24$, and $1.43$.
}\label{fig:algoritmivertailu2}
\end{figure}

\subsection{HSC initialization}

Previous simulations demonstrate that \textsc{HSC} does not improve the exponent $\gamma$ of its initialization algorithm. This subsection studies visually, how \textsc{HSC} depends on its initialization algorithm, and how it is possible that for some pairs of $n$ and $\rho$, the initialization algorithm does not cluster the nodes successfully while \textsc{HSC} does.

Similar to the experiment with an uninformative aggregate matrix, we consider a symmetric  core tensor
\[
\cS
= 
\begin{bmatrix}
1 & 1/2 & 1/2 & 1 \\
1/2 & 1 & 1 & 1/2
\end{bmatrix}.
\]
The only difference is that the entries with value $0$ have been changed to the value $1/2$. If the entries were zero, then any two row vectors of $\mat_k\cY$ from different clusters would be orthogonal. This, in turn, usually makes the projected row vectors to lie on two orthogonal lines representing different clusters, and the projected row vectors would overlap with each other in the visualization. Furthermore, when testing \textsc{HSC}, the initialization algorithm is changed from \textsc{Vanilla SVD} to the proposed \textsc{Hollow SVD}, because without any trimming step near the phase transition, there would be ``outliers'' with significantly larger norms, which would force the visualizations to focus on the outliers.

Figure \ref{fig:projektiot} shows the projections obtained from the initialization algorithm (\textsc{Hollow SVD}) and \textsc{HSC} for four different density parameters. As the density parameter decreases, the clusters merge in the projections. It may occur that the initialization provides somewhat meaningful projections, but the $k$-means algorithm cannot detect the clusters. However, such initialization might be informative enough for \textsc{HSC} to improve the projection and cluster the vectors. However, as expected, once the initialization is dominated by noise, \textsc{HSC} cannot cluster the vectors.

\begin{figure}[h!]
\centering
\subfloat[\textsc{Hollow SVD}]{
	\includegraphics[trim={0.2cm 0cm 0.2cm 0cm},clip,scale=0.43]{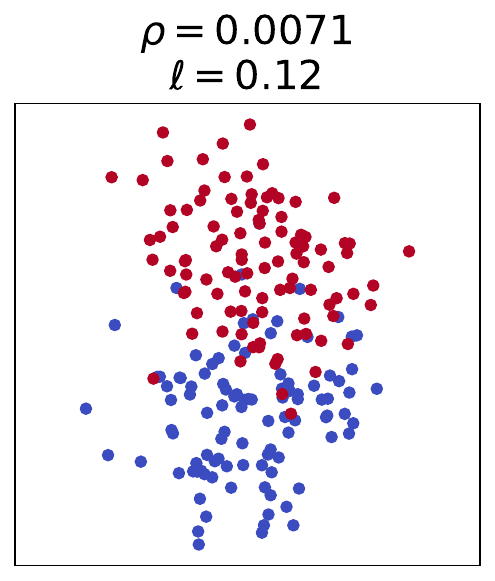}
	\includegraphics[trim={0.2cm 0cm 0.2cm 0cm},clip,scale=0.43]{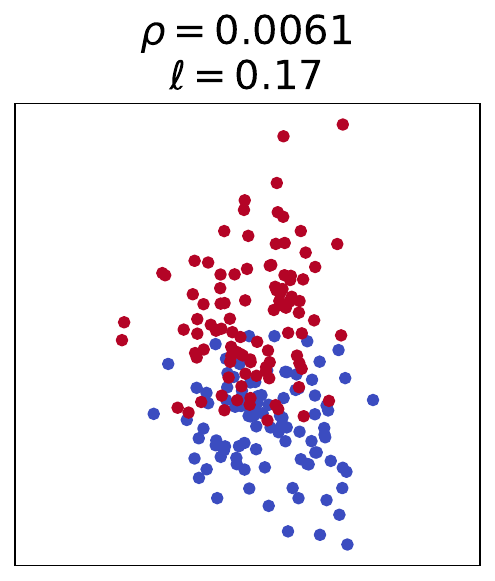}
	\includegraphics[trim={0.2cm 0cm 0.2cm 0cm},clip,scale=0.43]{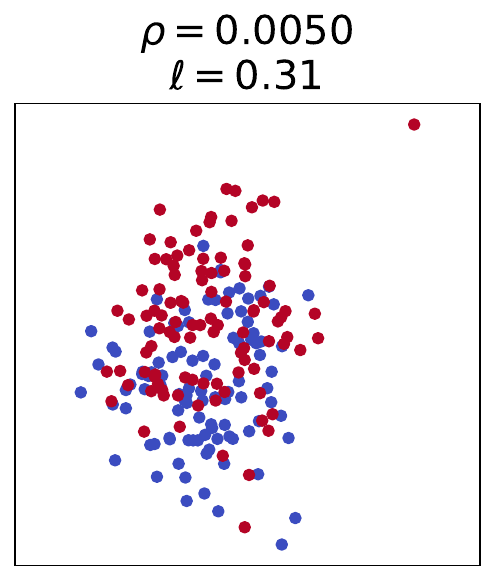}
	\includegraphics[trim={0.2cm 0cm 0.2cm 0cm},clip,scale=0.43]{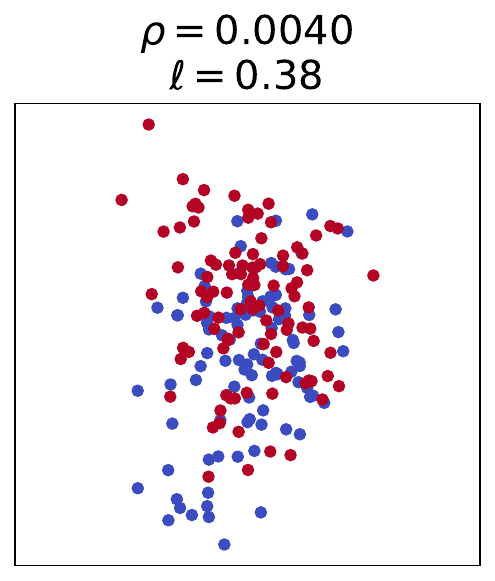}
} \\
\subfloat[\textsc{HSC} initialized with \textsc{Hollow SVD}]{
	\includegraphics[trim={0.2cm 0cm 0.2cm 0cm},clip,scale=0.43]{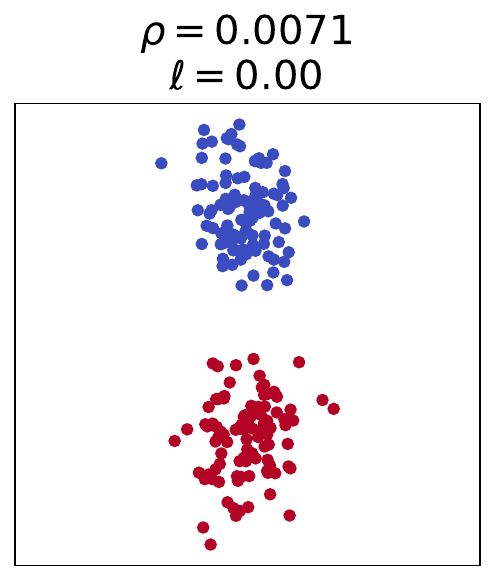}
	\includegraphics[trim={0.2cm 0cm 0.2cm 0cm},clip,scale=0.43]{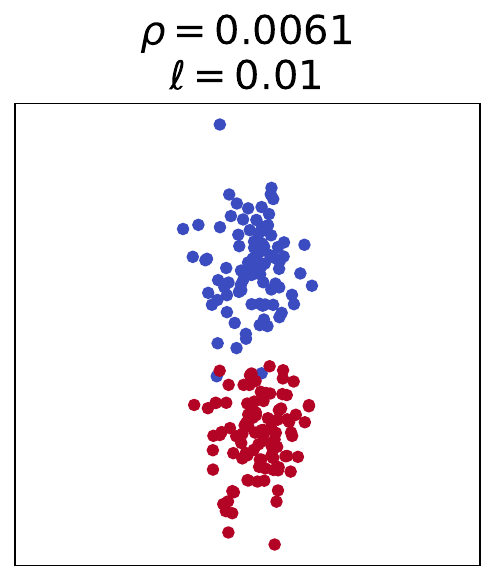}
	\includegraphics[trim={0.2cm 0cm 0.2cm 0cm},clip,scale=0.43]{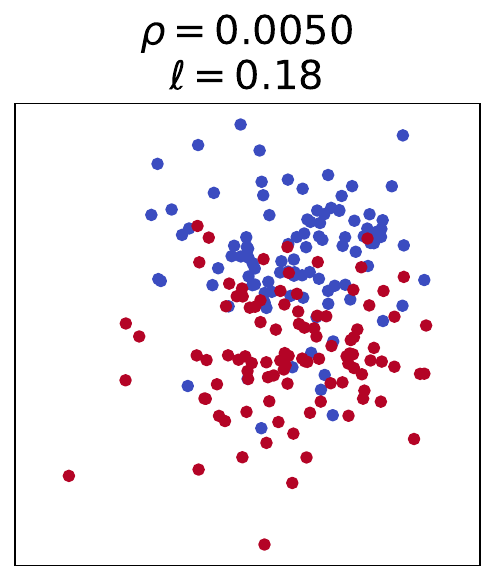}
	\includegraphics[trim={0.2cm 0cm 0.2cm 0cm},clip,scale=0.43]{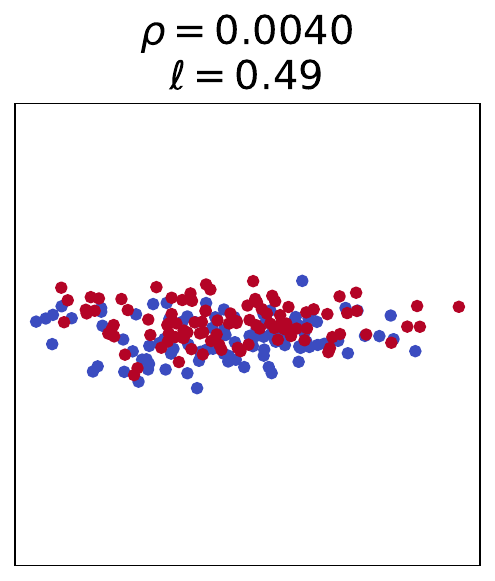}
} \\
\caption{Node embeddings computed with {\bf (a)} \textsc{Hollow SVD} and {\bf (b)} \textsc{HSC} initialized with \textsc{Hollow SVD}. The number of nodes is fixed to $n=200$ and the density parameter $\rho$ is varied around the phase transition.
}
\label{fig:projektiot}
\end{figure}

\section{Discussion}\label{sec:Discussion}

\subsection{Clustering}

This section discusses related theoretical research on multiway clustering with an emphasis on recent developments on hypergraphs and multilayer networks.

Zhang and Tan \cite{Zhang_Tan_2023} considered clustering of a $d$-uniform hypergraph on $n$ nodes, which is comparable to a TBM of order $d$ with Bernoulli distributed entries. They showed that strong consistency is impossible when $\rho\le c\frac{\log n}{n^{d-1}}$ for a sufficiently small constant $c$. Furthermore, they showed that a spectral clustering algorithm applied to an aggregate matrix $A$ with entries $A_{i_1i_2}=\sum_{i_3\dots i_d}\cY_{i_1\dots i_d}$ is strongly consistent when $\rho\ge C\frac{\log n}{n^{d-1}}$ for a sufficiently large constant $C$, assuming that $\E A$ is informative. Up to a constant factor, $A_{i_1i_2}$ counts the number of hyperedges shared by the nodes $i_1$ and $i_2$, effectively projecting the hypergraph onto a weighted graph.
Stephan and Zhu \cite{Stephan_Zhu_2024} showed that a spectral clustering based on non-backtracking walks achieves weak reconstruction (performs better than assigning all nodes to the largest cluster)
when $\rho\geq Cn^{-(d-1)}$ for a sufficiently large constant $C$, again assuming that $\E A$ is informative. However, these results do not address the case where $\E A$ is uninformative.

There has been research on multiway clustering with an uninformative adjacency matrix~$A$. Lei, Chen and Lynch \cite{Lei_Chen_Lynch_2020} analyzed an undirected multilayer network model, which can be formulated as a TBM with $\cY\in\set{0,1}^{n\times n\times t}$ with no cluster structure on the third mode (or $z_3(i)=i$, i.e., every index forms one cluster) and identical cluster structures on the first two modes $z_2=z_3=z$, $r_2=r_3=r$. Each slice $\cY_{::i}$ corresponds to a symmetric adjacency matrix of an undirected network layer in a multilayer network. They consider a clustering algorithm solving a least-squares problem
\[
 (\hat{Z},\hat{\cS})
 = \argmin_{\breve{Z},\ \breve{\cS}} \fronorm{\cY - \cS\times_1\breve{Z}\times_2\breve{Z}}^2
\]
with a membership matrix $\breve{Z}\in\set{0,1}^{n\times r}$ and a core tensor $\breve{S}\in[0,1]^{r\times r\times t}$. They did not give an algorithm finding an exact or provably approximate solution, but they did provide an algorithm trying to solve the optimization problem. They showed that $\hat{Z}$ achieves weak consistency, when $\rho\gg \frac{\log^{3/2}n}{nt^{1/2}}$. Lei and Lin \cite{Lei_Lin_2022} analyzed a similar problem and proposed an algorithm similar to Algorithm~\ref{alg:Spektraaliklusterointi}. Specifically, their algorithm masks out the diagonal (which they call debiasing or bias adjusting) but it does not remove the nodes with too large $L_1$-norms. Their algorithm is guaranteed to be weakly consistent for $\rho\gg\frac{\log^{1/2}(n+t)}{nt^{1/2}}$. Ke, Shi and Xia \cite{Ke_Shi_Xia_2020} proved an analogous result for hypergraph degree-corrected block models. Su, Guo, Chang and Yang \cite{Su_Guo_Chang_Yang_2024} extended these results and ideas to directed multilayer networks. The proof requires a matrix concentration inequality based on classical matrix Bernstein's inequalities (see for example \cite{Tropp_2012}). However, matrix Bernstein's inequalities, which may be based on a deep theorem by Lieb depending on the proof, suffer from a logarithmic factor leading to the threshold $\frac{\log^{1/2}(n+t)}{nt^{1/2}}$ instead of $\frac{1}{nt^{1/2}}$.   

Lei, Zhang and Zhu \cite{Lei_Zhang_Zhu_2024} studied computational and information theoretical limits of the multilayer network model corresponding to a TBM with $\cY\in\set{0,1}^{n\times n\times t}$ with no cluster structure on the third mode. They showed that weak consistency is impossible for $\rho\ll \frac{1}{nt}$ and the maximum likelihood estimator is weakly consistent for $\rho\gg\frac{1}{nt}$. Furthermore, based on a conjecture, they showed that a polynomial-time algorithm cannot be weakly consistent for $\rho\le \frac{1}{2nt^{1/2}\log^{1.4} n}$. Kunisky \cite{Kunisky_2025} studied the impossibility of detecting a presence of a cluster structure in symmetric tensors with low coordinate degree functions (LCDF). LCDFs are arbitrary linear combinations of functions depending on at most $D$ entries of a vector, in our case the data tensor. Here the coordinate degree $D$ is argued to roughly correspond to algorithms requiring computation time $e^{\Theta(D)}$. Although the developed theory is more general, in our special case Theorem 1.13 states that the detection task is impossible if $\rho\le cn^{-\frac{d}{2}}D^{-\frac{d-2}{2}}$ for a sufficiently small constant $c$ and $D\le cn$. Since polynomial-time algorithms correspond to $D\gg\log n$, the threshold corresponds roughly to $\rho\le c n^{-\frac{d}{2}}\log^{-\frac{d-2}{2}}n$. Moreover, Kunisky showed that the detection task becomes easier when an aggregated tensor remains informative, each aggregated mode decreasing the density threshold by $n^{-\frac{1}{2}}$. This agrees with our Corollary \ref{thm:Weak consistency, aggregated data}.

Similar computational gaps have been established under sub-Gaussian noise. Han, Luo, Wang, and Zhang \cite{Han_Luo_Wang_Zhang_2022} analyzed multiway clustering on a TBM with sub-Gaussian noise. Their algorithm is initialized with a slightly refined version of a HOSVD algorithm, which involves calculating eigenvalue decompositions of $(\mat_k\cY)(\mat_k\cY)^\top$ without removing the diagonal, and without removing rows and columns with too large $L_1$-norms. 
They showed that the initialization algorithm is weakly consistent for $\Delta^2/\sig^2\gg n^{-d/2}$, where $\sig$ is
the maximum sub-Gaussian norm of the noise tensor entries,
and $\Delta \asymp \rho$ in our asymptotic setting.
By combining the initialization with an iterative Lloyd algorithm,
they obtain an algorithm that is strongly consistent if
$\rho^2/\sig^2 \ge C n^{-d/2}$
for a sufficiently large constant $C$.
Notice that this threshold is similar to Theorem \ref{thm:Weak consistency}, namely both $\sig^2$ and $\rho$ are upper bounds of the variance of the data entries, and setting $\sig^2=\rho$ yields the same weak consistency condition $\rho\gg n^{-d/2}$. However, sub-Gaussian analysis with Bernoulli distributed entries does not give $\sig^2\asymp\rho$ but
\[
\sig^2
= \frac{1-2\rho}{2\log\frac{1-\rho}{\rho}}
\sim\frac{1}{2\log\frac{1}{\rho}}
\]
as $\rho\to 0$ by Theorem 2.1 in \cite{Buldygin_Moskvichova_2013}. This implies a density condition of $\rho \gg n^{-d/4} \log^{1/2}n$. Nonetheless, it seems that the analogous statistical-computational gap $\frac{\log n}{n^{d-1}}\lesssim\rho\ll\frac{1}{n^{d/2}}$ is present in clustering a binary TBM.

\subsection{Random matrix norm bounds}

Proving matrix concentration inequalities for a sparse Bernoulli matrix is more involved than for a sub-Gaussian matrix. Kahn and Szemer\'{e}di \cite{Friedman_Kahn_Szemeredi_1989} bound the second largest eigenvalue of an adjacency matrix of a random regular graph (random graphs with constant degree) and Feige and Ofek \cite{Feige_Ofek_2005} repeat the same arguments for Erd\H{o}s--R\'enyi graphs (symmetric adjacency matrix with lower diagonal having independent and identically distributed entries). Lei and Rinaldo \cite{Lei_Rinaldo_2015} extend Feige's and Ofek's argument to bound the spectral norm of a centered symmetric binary matrices with independent but possibly not identically distributed entries. With this they are able to show consistency of a spectral clustering algorithm under a stochastic block model, which is similar to a TBM with $d=2$. Chien, Lin, and Wang \cite{Chien_Lin_Wang_2019} extend this further to a centered adjacency matrix of a hypergraph stochastic block model (similar to binary TBM) to show strong consistency of a spectral clustering algorithm. Zhang and Tan \cite{Zhang_Tan_2023} relax some parametric assumptions made in \cite{Vannieuwenhoven_Vandebril_Meerbergen_2012}. We also apply the proof techniques developed in \cite{Friedman_Kahn_Szemeredi_1989} and \cite{Feige_Ofek_2005}. 

In the context of clustering multilayer networks, Lei, Chen and Lynch \cite{Lei_Chen_Lynch_2020} adapt Feige's and Ofek's argument to tensor setting. Lei and Lin \cite{Lei_Lin_2022} consider alternative approach by applying matrix concentration inequality based on classical matrix Bernstein's inequalities (see for example \cite{Tropp_2012}). However, matrix Bernstein's inequalities, which may be based on a deep theorem by Lieb depending on the proof, suffer from a logarithmic factor making the matrix bounds slightly suboptimal.

In the case of sub-Gaussian TBM, Han, Luo, Wang, and Zhang \cite{Han_Luo_Wang_Zhang_2022} approach by analyzing concentration of a singular subspace of a possibly wide random matrix (i.e., a vector subspace spanned by the first $r$ right or left singular vectors corresponding to the $r$ largest singular values of a random matrix). For this purpose, the classical Davis--Kahan--Wedin theorem is insufficient as it provides a common error bound for both left and right singular subspaces. Hence, they rely on more sophisticated perturbation bounds developed by Cai and Zhang \cite{Cai_Zhang_2018} providing different bounds for left and right singular subspaces which is relevant for particularly tall and wide matrices. The probabilistic analysis is based on $\eps$-nets and Hanson--Wright inequality (see for example Chapter 6 in \cite{Vershynin_2018}).


\section{Proofs}\label{sec:Technical overview}

This section presents the proof of the main theorem (Theorem \ref{thm:Weak consistency}) in three subsections. Section \ref{subsec:Deterministic analysis} analyzes spectral clustering with deterministic error bounds, Section \ref{subsec:Sparse matrices} studies concentration of sparse matrices and Section \ref{subsec:Weak consistency} proves the main theorem. As the analysis of matrix concentration is quite involved, the detailed proofs of Section \ref{subsec:Sparse matrices} are postponed to the appendix.

\subsection{Deterministic analysis}\label{subsec:Deterministic analysis}

The following theorem states that a low-rank approximation of a data matrix allows efficient clustering when the noise level measured with spectral norm is sufficiently small.
The proof is similar to the proofs of Lemma 4
in \cite{Zhang_Tan_2023}, Theorem 3 in \cite{Han_Luo_Wang_Zhang_2022}, Theorem~2.2 in \cite{Jin_2015} and Lemma 5.3 in \cite{Lei_Rinaldo_2015}. However, the first three included this step into consistency theorems assuming statistical models not appropriate for our purposes. The second part of the proof of Theorem~\ref{thm:k-means} can be replaced by Lemma 5.3 in \cite{Lei_Rinaldo_2015} assuming equal dimensions $n=m$. For completeness, the full proof is presented below.

\begin{theorem}
\label{thm:k-means}
Consider a matrix $X \in \R^{n\times m}$ with
rows $X_{i:}=S_{z(i):}$, where $z \in [r]^n$, and $S \in \R^{r\times m}$ has minimum row separation
$\Delta = \min_{i \ne j} \norm{S_{i:}-S_{j:}} > 0$. Let $Y\in\R^{n\times m}$ be another matrix and $\hat{X}$ be its best rank-$r$ approximation. Let $\hat{z}$ be the output of a quasi-optimal $k$-means clustering algorithm applied to the rows of
$\hat{X}$, i.e., find a vector $\hat{z} \in [r]^n$
and centroids $\hat{\theta}_1,\dots,\hat{\theta}_{r}\in\R^{m}$ satisfying
\[
\sum_{j}\norm{\hat{X}_{j:}-\hat{\theta}_{\hat{z}_j}}^{2}
\wle Q \min_{\breve{z},\breve{\theta}}\sum_{j}\norm{\hat{X}_{j:}-\breve{\theta}_{\breve{z}_j}}^{2}
\]
with some relaxation parameter $Q>1$.
If 
$\min_l \abs{z^{-1}\set{l}}> 128Q\frac{r\spenorm{Y - X}^{2}}{\Delta^{2}}$,
then the misclassification rate
has an upper bound
\[
 \loss(z,\hat{z})
 \wle 128 Q \frac{r\spenorm{Y - X}^{2}}{n\Delta^{2}}.
\]
\end{theorem}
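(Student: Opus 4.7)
The plan is to proceed in three stages: first reduce the deviation of $\hat{X}$ from $X$ to a rank-corrected Frobenius bound, then use the quasi-optimality of the $k$-means solution to control the sum of squared distances from the rows of $X$ to the fitted centroids, and finally extract a label-matching permutation via a pigeonhole argument enabled by the cluster balance assumption.

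For the first step, both $\hat{X}$ and $X$ have rank at most $r$, so $\hat{X} - X$ has rank at most $2r$ and therefore $\fronorm{\hat{X} - X}^2 \le 2r \spenorm{\hat{X} - X}^2$. Since $\hat{X}$ is the best rank-$r$ approximation of $Y$ while $X$ is a rank-$r$ competitor, $\spenorm{\hat{X} - Y} \le \spenorm{Y - X}$, and the triangle inequality then yields $\spenorm{\hat{X} - X} \le 2\spenorm{Y - X}$. Combining the two gives $\fronorm{\hat{X} - X}^2 \le 8r \spenorm{Y - X}^2$.

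For the second step, the pair $(z, S)$ attains $k$-means cost exactly $\fronorm{\hat{X} - X}^2$ on the rows of $\hat{X}$, so quasi-optimality produces $\sum_i \norm{\hat{X}_{i:} - \hat{\theta}_{\hat{z}(i)}}^2 \le Q \fronorm{\hat{X} - X}^2$. A $(a+b)^2 \le 2a^2 + 2b^2$ triangle step then transfers the bound to $X$: $\sum_i \norm{X_{i:} - \hat{\theta}_{\hat{z}(i)}}^2 \le 2(1+Q) \fronorm{\hat{X} - X}^2 \le 32 Q r \spenorm{Y - X}^2$, where I use $Q > 1$ so that $1+Q \le 2Q$. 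Defining the bad set $T := \{i : \norm{X_{i:} - \hat{\theta}_{\hat{z}(i)}} \ge \Delta/2\}$, Markov's inequality then yields $|T| \le 128 Q r \spenorm{Y - X}^2 / \Delta^2$, which already matches the target misclassification bound.

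For the third step, I would argue that $T$ in fact contains every misclassified row under a suitable label permutation. For any $i, j \notin T$ with $z(i) \ne z(j)$, the triangle inequality together with the separation $\Delta$ rules out $\hat{z}(i) = \hat{z}(j)$; hence the images $\hat{z}(z^{-1}\{l\} \setminus T)$ for $l \in [r]$ are pairwise disjoint subsets of $[r]$. The cluster-size hypothesis forces $|T| < \min_l |z^{-1}\{l\}|$, so each of these $r$ images is nonempty, and since they are pairwise disjoint subsets of $[r]$ pigeonhole makes each a singleton. This defines a permutation $\pi$ with $\pi(z(i)) = \hat{z}(i)$ on every $i \notin T$, so the misclassification rate is at most $|T|/n$, completing the proof. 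The delicate part is precisely tuning the Markov threshold against $\Delta$ so that the single constant $128Q$ simultaneously activates the pigeonhole step (via $|T| < \min_l |z^{-1}\{l\}|$) and gives the final misclassification bound; picking threshold $\Delta/2$ leaves exactly enough room in the triangle inequality $\norm{S_{l:} - S_{l':}} \ge \Delta$ to separate centroids across true clusters.
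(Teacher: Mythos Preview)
Your proof is correct and follows essentially the same approach as the paper's. The only cosmetic differences are that the paper routes the bound $\spenorm{\hat{X}-Y}\le\spenorm{Y-X}$ through Eckart--Young--Mirsky and Weyl's inequality explicitly (via $\sigma_{r+1}(Y)$) rather than citing the rank-$r$ optimality directly, and that the paper builds its permutation from estimated labels to true labels (checking well-definedness separately) whereas your pigeonhole on the disjoint nonempty images $\hat{z}(z^{-1}\{l\}\setminus T)$ goes the other direction; both yield the same $|T|/n$ bound.
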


\begin{proof}
Define $E=Y-X$. Calculate a singular value decomposition $Y=\hat{U}\hat{\Sigma}\hat{V}^\top+\hat{U}_{\perp}\hat{\Sigma}_{\perp}\hat{V}_{\perp}^\top$ and define $\hat{X}=\hat{U}\hat{\Sigma}\hat{V}^\top$, which is the best rank-$r$ approximation of $Y$ (in statistical applications $\hat{X}$ is used to estimate $X$). Since the rank of $\hat{X}$ is at most $r$ and the rank of $X$ is at most $r$, the rank of their difference $\hat{X}-X$ is at most $2r$ (every vector $u$ of the image of $\hat{X}-X$ can be written as $u=u_1+u_2$, where $u_1\in\operatorname{im}\hat{X}$ is in an $r$-dimensional subspace and $u_2\in\operatorname{im}X$ is in another $r$-dimensional subspace). By Corollary 2.4.3 in \cite{Golub_VanLoan_2013}, the Frobenius norm and the spectral norm of the rank-$2r$ matrix $\hat{X}-X$ are comparable according to $\fronorm{\hat{X}-X}^{2} \le 2r \spenorm{\hat{X}-X}^{2}$. Now by the Eckart--Young--Mirsky theorem \cite{Eckart_Young_1936} and Weyl's inequality \cite{Weyl_1912}, we have
\[
\begin{split}
	\fronorm{\hat{X}-X}^{2}
	&\le 2r\spenorm{\hat{X}-X}^{2} 
	\le 2r (\spenorm{\hat{X}-Y}+\spenorm{E})^{2} 
	\overset{\rm EYM}{\le} 2r (\sig_{r+1}(Y)+\spenorm{E})^{2} \\
	&\overset{\rm Weyl}{\le} 2r(\sig_{r+1}(X)+\spenorm{E}+\spenorm{E})^{2} 
	\le 8r\spenorm{E}^{2}. \\
\end{split}
\]
This shows that the estimate $\hat{X}$ is close to $X$ in Frobenius norm. Denote the estimated clusters by $\hat{z}_{j}\in[r]$, $j=1,\dots,n$ and their centers by $\hat{\theta}_{k}\in\R^{m}$, $k=1,\dots,r$. By quasi-optimality, we have
\[
\begin{split}
	\sum_{j}\norm{\hat{X}_{j:}-\hat{\theta}_{\hat{z}_j}}^{2}
	&\le Q\min_{\breve{z},\breve{\theta}}\sum_{j}\norm{\hat{X}_{j:}-\breve{\theta}_{\breve{z}_j}}^{2} 
	\le Q\sum_{j}\norm{\hat{X}_{j:}-X_{j:}}^{2} 
	= Q\fronorm{\hat{X}-X}^{2} 
	\le 8Qr\spenorm{E}^{2}.
\end{split}
\]
This shows that the row vectors of the estimate $\hat{X}$ are close to the estimated cluster means on average. Now we can estimate the distances between the true cluster means (rows of $S$) and the estimated cluster means ($\hat{\theta}_{\hat{z}_j}$):
\[
\begin{split}
	\sum_{j}\norm{X_{j:}-\hat{\theta}_{\hat{z}_j}}^{2}
	&\le 2\sum_{j}\norm{X_{j:}-\hat{X}_{j:}}^{2}+\norm{\hat{X}_{j:}-\hat{\theta}_{\hat{z}_j}}^{2}  
	\le 32Qr\spenorm{E}^{2}. \\
\end{split}
\]
For any $t>0$, Markov's inequality with respect to the counting measure gives
\[
\begin{split}
	\abs{\set{j\mid\norm{\hat{\theta}_{\hat{z}_j} - X_{j:}}^{2}\ge t^{2}}}
	&\le\frac{\sum_j\norm{\hat{\theta}_{\hat{z}_j} - X_{j:}}^{2}}{t^2} 
	\le\frac{32Qr\spenorm{E}^2}{t^2}.
\end{split}
\]
Consider the set $A=\set{j\in[n]\mid\norm{\hat{\theta}_{\hat{z}_j} - X_{j:}}<t}$. If $z_i\ne z_j$ for $i,j\in A$, then
\[
\begin{split}
	\norm{\hat{\theta}_{\hat{z}_i} - S_{z_j:}}
	&\ge \norm{S_{z_i:} - S_{z_j:}} - \norm{S_{z_i:} - \hat{\theta}_{\hat{z}_i}}
	>\Delta - t.
\end{split}
\]
If $z_i= z_j$ for some $i,j\in A$, then $\norm{\hat{\theta}_{\hat{z}_i} - S_{z_j:}} < t$. Consider $t$ satisfying $\Delta-t=t$, or equivalently, $t=\Delta/2$. The previous calculation shows that for every $i,j\in A$, $z_i=z_j$ if and only if $\norm{\hat{\theta}_{\hat{z}_i} - S_{z_j:}} < \Delta/2$. This gives a well-defined mapping $\pi\colon\hat{z}(A)\to[r]$, $\pi(\hat{z}_i)=z_i$ (if $\hat{z}_i=\hat{z}_j$, then $\norm{\hat{\theta}_{\hat{z}_i} - S_{z_j:}} < \Delta/2$ and hence $z_i=z_j$).

Next, let us show that $\pi$ is a bijection. Since $\abs{A^c}\leq\frac{32Qr\spenorm{E}^2}{t^2}< \min_l\abs{z^{-1}\set{l}}$, where the latter inequality holds by assumption, the intersection $A\cap z^{-1}\set{l}$ must be nonempty for all $l\in[r]$. Therefore, for every $l\in[r]$, there exists $i\in A$ such that $\pi(\hat{z}_i)=z_i=l$. This shows that $\pi\colon[r]\to[r]$ is a surjection, and hence a bijection.
The bound for the misclassification rate follows now from the estimate $\loss(z,\hat{z})\le \abs{A^{c}}/n$.
\end{proof}

As Theorem \ref{thm:k-means} shows, clustering from a low-rank approximation works well when the spectral norm of the noise matrix is sufficiently small and the clusters are sufficiently separated in the signal. The following result (Lemma~\ref{lem:separation of clusters}) addresses the latter issue by estimating the separation of clusters $\Delta$ in Theorem \ref{thm:k-means} under assumptions \eqref{eq:ClusterSeparation} and \eqref{eq:ClusterBalance}. The former issue is addressed in Section~\ref{subsec:Sparse matrices}.

\begin{lemma}
\label{lem:separation of clusters}
Let $X = \E Y$ where $Y = \mat_k \cY$ is
the mode-$k$ matricization of the data tensor
$\cY$ sampled from $\TBM(\rho, \cS, z_1,\dots,z_d)$. Then 
\[
 \min_{i,j \in [n_k] : z_k(i) \ne z_k(j)} \norm{(XX^\top)_{i:} - (XX^\top)_{j:}}
 \wge
 \frac{\prod_{k'}\al_{k'}}{\sqrt{2\al_k}} \frac{\de_k^2}{\sqrt{n_k r_k}} n_1 \cdots n_d \rho^2,
\]
where
$\de_k$ is the mode-$k$ cluster separation defined by \eqref{eq:ClusterSeparation},
and
$\alpha_k$ is the mode-$k$ cluster balance coefficient defined by \eqref{eq:ClusterBalance}.
\end{lemma}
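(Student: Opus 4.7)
The plan is to exploit the Tucker structure of the signal and reduce everything to the row separation of $\mat_k\cS$. Using $\cX = \rho\,\cS \times_1 Z^{(1)} \times_2 \cdots \times_d Z^{(d)}$ I would first write $X = \mat_k\cX = Z^{(k)}M$, where $M\in\R^{r_k\times m_k}$ is the mode-$k$ matricization of $\rho\,\cS\times_1 Z^{(1)}\times_2\cdots\times_{k-1}Z^{(k-1)}\times_{k+1}Z^{(k+1)}\times_{k+2}\cdots\times_d Z^{(d)}$. Writing $\xi_a := M_{a:}$, the row $X_{i:}$ depends on $i$ only through $a=z_k(i)$, so $(XX^\top)_{il} = \langle \xi_{z_k(i)},\xi_{z_k(l)}\rangle$, and the quantity to bound equals, over distinct cluster labels $a\ne b$,
\[
\|(XX^\top)_{i:} - (XX^\top)_{j:}\|^2
= \sum_{c=1}^{r_k}\abs{z_k^{-1}\{c\}}\,\langle \xi_a-\xi_b,\xi_c\rangle^2.
\]

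The main obstacle is that a Cauchy--Schwarz estimate on an individual inner product runs in the wrong direction. I would bypass this by retaining only the two terms $c\in\{a,b\}$ and using the identity $\langle \xi_a-\xi_b,\xi_a\rangle - \langle \xi_a-\xi_b,\xi_b\rangle = \|\xi_a-\xi_b\|^2$ together with the elementary inequality $x^2+y^2\ge\tfrac12(x-y)^2$ to obtain $\langle \xi_a-\xi_b,\xi_a\rangle^2 + \langle \xi_a-\xi_b,\xi_b\rangle^2 \ge \tfrac12\|\xi_a-\xi_b\|^4$. Combined with the cluster balance bound $\abs{z_k^{-1}\{c\}}\ge \alpha_k n_k/r_k$, this yields the key inequality
\[
\|(XX^\top)_{i:}-(XX^\top)_{j:}\|^2
\ \ge\ \frac{\alpha_k n_k}{2r_k}\,\|\xi_a-\xi_b\|^4,
\]
in which the fourth power of the separation is the essential feature.

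It then remains to lower bound $\|\xi_a-\xi_b\|^2$ in terms of $\delta_k$. I would expand this squared norm as a sum over the columns of $M$, indexed by $(i_1,\dots,i_{k-1},i_{k+1},\dots,i_d)$, and group the contributions by the cluster labels $(l_{k'})_{k'\ne k}=(z_{k'}(i_{k'}))_{k'\ne k}$. Applying the balance assumption $\abs{z_{k'}^{-1}\{l_{k'}\}}\ge \alpha_{k'}n_{k'}/r_{k'}$ in each of these modes produces
\[
\|\xi_a-\xi_b\|^2
\ \ge\ \rho^2\prod_{k'\ne k}\frac{\alpha_{k'}n_{k'}}{r_{k'}}\,\|(\mat_k\cS)_{a:}-(\mat_k\cS)_{b:}\|^2,
\]
and the separation hypothesis \eqref{eq:ClusterSeparation} then supplies $\|(\mat_k\cS)_{a:}-(\mat_k\cS)_{b:}\|^2\ge \delta_k^2(r_1\cdots r_d)/r_k$.

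Substituting this into the previous display and simplifying the products of $\alpha$'s, $n$'s and $r$'s reduces to the identity $\prod_{k'\ne k}\tfrac{\alpha_{k'}n_{k'}}{r_{k'}}\cdot\tfrac{r_1\cdots r_d}{r_k} = (\prod_{k'\ne k}\alpha_{k'})\,\tfrac{n_1\cdots n_d}{n_k}$, after which taking a square root and collecting the $\alpha_k$ factors yields precisely the claimed bound. Apart from the squaring trick that resolves the sign-of-inner-product issue, everything else is bookkeeping with the balance coefficients and the definition of $\delta_k$, so I expect no further difficulties.
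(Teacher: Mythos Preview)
Your proof is correct and follows essentially the same route as the paper: reduce via the Tucker structure to the row vectors $\xi_a$, establish the key inequality $\|(XX^\top)_{i:}-(XX^\top)_{j:}\|^2 \ge \tfrac{\alpha_k n_k}{2r_k}\|\xi_a-\xi_b\|^4$, then bound $\|\xi_a-\xi_b\|^2$ via the balance coefficients and the separation $\delta_k$. The only difference is in how the key inequality is derived: the paper proves the generic fact $\|(AA^\top)_{i:}-(AA^\top)_{i':}\|\ge \tfrac{1}{\sqrt 2}\|A_{i:}-A_{i':}\|^2$ by Cauchy--Schwarz applied to $(e_i-e_{i'})^\top AA^\top(e_i-e_{i'})$, whereas you discard all but the $c\in\{a,b\}$ terms and use $x^2+y^2\ge\tfrac12(x-y)^2$; both yield the same constant and the same final bound.
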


\begin{proof}
Fix $k\in[d]$ and let $i_k,i_k'\in [n_k]$ be entities from different clusters
$z_k(i_k)\ne z_k(i_k')$. Denote $l_{-k}=(l_1,\dots,l_{k-1},l_{k+1},\dots,l_d)\in[r_1\dots r_d/r_k]$ and $j_{-k}=(j_1,\dots,j_{k-1},j_{k+1},\dots,j_d)\in[n_1\cdots n_d/n_k]$.
Define a diagonal matrix $D\in\R^{\frac{r_1\dots r_d}{r_k}\times\frac{r_1\dots r_d}{r_k}}$ with nonnegative diagonal entries
$D_{l_{-k}l_{-k}}
= \frac{\abs{z_1^{-1}\set{l_1}}\cdots\abs{z_{d}^{-1}\set{l_{d}}}}{\abs{z_k^{-1}\set{l_k}}}$.
Now
\[
\begin{split}
 &\norm{((\mat_k\cX)(\mat_k\cX)^\top)_{i_k:} - ((\mat_k\cX)(\mat_k\cX)^\top)_{i_k':}}^2 \\
 &= \sum_{j_k}\left(\sum_{j_{-k}}((\mat_k\cX)_{i_kj_{-k}} - (\mat_k\cX)_{i_k'j_{-k}}))(\mat_k\cX)_{j_kj_{-k}}\right)^2 \\
 &= \rho^4\sum_{l_k}\abs{z_k^{-1}\set{l_k}}
 \Bigg(\sum_{l_{-k}}\abs{z_1^{-1}\set{l_1}}\dots\abs{z_{k-1}^{-1}\set{l_{k-1}}}\abs{z_{k+1}^{-1}\set{l_{k+1}}}\dots\abs{z_d^{-1}\set{l_d}} \\
 &\quad \times((\mat_k\cS)_{z_k(i_k),l_{-k}} - (\mat_k\cS)_{z_k(i_k'),l_{-k}})(\mat_k\cS)_{l_k,l_{-k}}\Bigg)^2.
\end{split}
\]
By applying the cluster balance condition \eqref{eq:ClusterBalance},
the last term is bounded from below by
\[
\begin{split}
 &\ge \frac{\rho^4\alpha_k n_k}{r_k}\sum_{l_k}\Bigg(\sum_{l_{-k}}D_{l_{-k}l_{-k}}((\mat_k\cS)_{z_k(i_k),l_{-k}} - (\mat_k\cS)_{z_k(i_k'),l_{-k}})(\mat_k\cS)_{l_k,l_{-k}}\Bigg)^2 \\
 &= \frac{\rho^4\alpha_k n_k}{r_k}\norm{((\mat_k\cS)D(\mat_k\cS)^\top)_{z_k(i_k):} - ((\mat_k\cS)D(\mat_k\cS)^\top)_{z_k(i_k'):}}^2.
\end{split}
\]	
Next, we observe that for any matrix $A$ and indices $i\ne i'$,
the Cauchy--Schwarz inequality gives
\[
\begin{split}
	\norm{A_{i:} - A_{i':}}^2
	&= \norm{(e_i - e_{i'})^\top A}^2
	= (e_i - e_{i'})^\top AA^\top (e_i - e_{i'}) \\
	&\le \sqrt{2}\norm{(e_i - e_{i'})^\top AA^\top }
	= \sqrt{2}\norm{(AA^\top )_{i:} - (AA^\top )_{i':}}.
\end{split}
\]
Applying this inequality to matrix $(\mat_k\cS)\sqrt{D}$ now implies a lower bound
\[
\begin{split}
 &\norm{((\mat_k\cX)(\mat_k\cX)^\top )_{i_k:} - ((\mat_k\cX)(\mat_k\cX)^\top )_{i_k':}}^2 \\
 &\wge \frac{\rho^4\alpha_k n_k}{2r_k}
 \norm{((\mat_k\cS)\sqrt{D})_{z_k(i_k):} - ((\mat_k\cS)\sqrt{D})_{z_k(i_k'):}}^4 \\
 &\wge \frac{\rho^4\alpha_k n_k}{2r_k}\left(\prod_{k'\ne k}\frac{\alpha_{k'} n_{k'}}{r_{k'}}\right)^2\norm{(\mat_k\cS)_{z_k(i_k):} - (\mat_k\cS)_{z_k(i_k'):}}^4.
\end{split}
\]
By the cluster separation assumption \eqref{eq:ClusterSeparation}, we obtain
\[
 \norm{((\mat_k\cX)(\mat_k\cX)^\top )_{i_k:} - ((\mat_k\cX)(\mat_k\cX)^\top )_{i_k':}}
 \wge \frac{\prod_{k'}\al_{k'}}{\sqrt{2\al_k}} \frac{\de_k^2}{\sqrt{n_kr_k}}n_1\cdots n_d\rho^2.
\]
\end{proof}

\subsection{Sub-Poisson random matrices}
\label{subsec:Sparse matrices}

Theorem \ref{the:TrimmedMatrixConcentration} bounds the spectral norm of a random matrix, from which rows and columns with too large $L_1$-norms are removed. This result along with its proof is a mild generalization of Theorem 1.2 in \cite{Feige_Ofek_2005}.
Namely, we relax the assumption of binary entries to sub-Poisson entries.
In a similar spirit, Theorem \ref{thm:hollow gram matrix bound} analyzes concentration of a trimmed product $XX^\top $, where the entries of the random matrix $X\in \Z^{n\times m}$ are independent and centered. The focus is only on the off-diagonal entries, because the diagonal and off-diagonal entries concentrate at different rates. The proof is based on decoupling and an observation that the entries of $XX^\top $ are nearly independent, eventually allowing us to apply Theorem \ref{the:TrimmedMatrixConcentration}. This argument, however, relies on having a sufficiently sparse matrix $X$. Finally, Lemma \ref{lem:hollow gram matrix row L1 bound} asserts that only a small fraction of rows and columns are removed.

\begin{theorem}
\label{the:TrimmedMatrixConcentration}
Let $X \in \R^{n\times m}$ be a random matrix with
independent centered 
sub-Poisson entries with variance proxy $\sig^2$.
Define a trimmed matrix $X' \in \R^{n\times m}$ by
\begin{equation}
	\label{eq:TrimmedMatrix}
	X_{ij}'
	\weq
	\begin{cases}
		X_{ij}, & \norm{X_{i:}}_1\vee\norm{X_{:j}}_1\le \Ctrim(n\vee m) \sig^2, \\
		0, & \text{otherwise}.
	\end{cases}
\end{equation}
Then
\[
\pr \left( \spenorm{X'}\ge 9 ( \Ctrim + 66 t) \sig\sqrt{n\vee m} \right)
\wle 2 \left( e(n\vee m) \right)^{-2t}
\qquad \text{for all $t\ge 1$}.
\]
\end{theorem}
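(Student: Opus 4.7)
The plan is to follow the Kahn--Szemer\'edi argument as refined by Feige and Ofek \cite{Feige_Ofek_2005}, replacing the Bernoulli Chernoff bound with the sub-Poisson Bennett inequality \SPcite{Bennett}. The technology is standard once the correct MGF estimate is in hand; the only substantive change from \cite{Feige_Ofek_2005} is upgrading a Bernoulli concentration step to a sub-Poisson one.

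First I would reduce the spectral norm to a bilinear form over a finite net. Writing $\spenorm{X'}=\sup_{\twonorm{u}=\twonorm{v}=1} u^\top X' v$, I pass to the Feige--Ofek lattice net $T$ of (almost) unit vectors whose entries are integer multiples of $1/\sqrt{n\vee m}$, so that $\spenorm{X'}\le c\sup_{u,v\in T}u^\top X' v$ with $\abs{T}\le e^{C(n\vee m)}$. For each pair $(u,v)\in T\times T$ I split the bilinear form according to the magnitude of $u_iv_j$: let $L=\{(i,j)\colon \abs{u_iv_j}\le \sig/\sqrt{n\vee m}\}$ (light pairs) and $H$ its complement (heavy pairs), so that $u^\top X'v=\sum_L u_iv_jX'_{ij}+\sum_H u_iv_jX'_{ij}$.

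For the light contribution, since $X'_{ij}$ coincides with $X_{ij}$ on the un-trimmed entries, it suffices to bound $\sum_L u_iv_jX_{ij}$, a sum of independent centered random variables with coefficients bounded by $\sig/\sqrt{n\vee m}$ and with variance-proxy sum $\sig^2\sum_{(i,j)\in L}(u_iv_j)^2\le \sig^2$. The sub-Poisson Bennett inequality, applied after noting that the coefficient $u_iv_j$ enters the MGF bound \eqref{eq:SubPoissonMain} multiplicatively through $\abs{\la u_iv_j}$, then yields a tail of order $\exp(-ct(n\vee m))$ once the target deviation is $t\sig\sqrt{n\vee m}$ with $t\ge 1$. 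A union bound over the $\abs{T}^2\le e^{2C(n\vee m)}$ pairs leaves the desired $(e(n\vee m))^{-2t}$-type tail with room for the universal constants.

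For the heavy contribution I would argue deterministically from the trimming. Since $\onenorm{X'_{i:}}\vee\onenorm{X'_{:j}}\le\Ctrim(n\vee m)\sig^2$ for every row and column of $X'$, a dyadic decomposition of $H$ by the magnitude of $\abs{u_iv_j}$ together with the row/column $L_1$ bounds gives, after summing over the $O(\log(n\vee m))$ dyadic levels (or directly by the Feige--Ofek bookkeeping that packs the levels into a telescoping estimate), $\bigabs{\sum_H u_iv_jX'_{ij}}\le C'\Ctrim\sig\sqrt{n\vee m}$ for all $(u,v)\in T\times T$ simultaneously. Adding the light and heavy bounds and undoing the net approximation yields the claim. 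The hard part is the Bennett step: because $X_{ij}$ is unbounded, one must verify that the MGF \eqref{eq:SubPoissonMain} is effective up to $\abs{\la}\sim\sqrt{n\vee m}/\sig$, which is exactly the regime in which $e^{\abs{\la u_iv_j}}-1-\abs{\la u_iv_j}$ behaves like $(\la u_iv_j)^2$ for light pairs and thereby delivers the subgaussian-type tail the net argument needs.
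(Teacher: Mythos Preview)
Your light-pair treatment is essentially right, though the reduction from $X'$ to $X$ on light pairs is not as automatic as you suggest: zeroing entries can move a bilinear form in either direction. The paper handles this by taking a \emph{sign-symmetric} $\eps$-net and observing that $\max_{u,v}\iprod{u,X'v}_L\le\max_{u,v}\iprod{u,Xv}_L$ once the maximum ranges over all sign flips (Lemma~\ref{the:QuadraticFormSign}). This is a fixable detail.

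The genuine gap is in the heavy-pair treatment. You claim that the heavy contribution is controlled \emph{deterministically} by the trimming bound $\onenorm{X'_{i:}}\vee\onenorm{X'_{:j}}\le\Ctrim(n\vee m)\sig^2$ alone. This is not true and is not what Feige--Ofek do. A back-of-envelope check: with the dyadic bins $S_k=\{i:\abs{u_i}\in(2^{-k-1},2^{-k}]\}$ and $T_\ell$ similarly, the row/column $L_1$ bounds give at best $\sum_{(k,\ell)\in H}2^{-k-\ell}\sum_{S_k\times T_\ell}\abs{X'_{ij}}\lesssim\Ctrim n\sig^2$ per dyadic diagonal, and summing over the diagonals introduces a $\log$ factor and leaves you with $\Ctrim n\sig^2\log(\sqrt{n}/\sig)$, not $\Ctrim\sig\sqrt{n}$. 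The trimming is only strong enough to handle the highly anisotropic bins where $a_k/b_\ell$ or $b_\ell/a_k$ exceeds $\sig\sqrt{n}$ (the set $H_1$ in the paper's proof).

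The missing ingredient, and the heart of the Feige--Ofek argument, is a \emph{probabilistic} bound on all block sums $X_{S,T}=\max_{u\in\{\pm1\}^S,v\in\{\pm1\}^T}\sum_{S\times T}u_iX_{ij}v_j$ simultaneously over every pair of subsets $S\subset[n]$, $T\subset[m]$ (Lemma~\ref{the:SubmatrixSums} in the paper). This is where the sub-Poisson Bennett inequality does real work beyond the light part: one needs $-\log\beta(X_{S,T},\abs{S}\abs{T}\sig^2)\lesssim(1+t)(\abs{S}\log\tfrac{2en}{\abs{S}}+\abs{T}\log\tfrac{2em}{\abs{T}})$ with the full Bennett function $\beta$, and then a careful four-way case split of the heavy bins (the paper's $H_1,H_2,H_3,H_4$) combines this discrepancy-type bound with the trimming to extract $\sig\sqrt{n}$. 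The $66t$ in the final constant comes precisely from this probabilistic heavy-pair analysis, not from the light part alone. Your sketch omits this entire mechanism.
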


\begin{proof}
See Appendix \ref{app:the:TrimmedMatrixConcentration}.
\end{proof}

Given a random matrix $X \in \Z^{n \times m}$, we show that a trimmed version of the Gram matrix $X X^\top$ concentrates around a similarly trimmed version of $(\E X)(\E X)^\top$.
Below we recall that $\abs{X}$ denotes the matrix of entrywise absolute values of $X$.

\begin{theorem}
\label{thm:hollow gram matrix bound}
Let $X \in \Z^{n\times m}$ be a random matrix with independent sub-Poisson entries
with variance proxy $\sig^2$, and such that
$\E\abs{X_{ij}}\le \sig^2$.
Let $\Ctrim \ge 0$
and define
indicator matrices $M,N \in \{0,1\}^{n \times n}$ by
$M_{ij} = \indic(i \ne j)$ and
\[
 N_{ij}
 \weq \indic \bigg( \onenorm{ (\abs{X} \abs{X}^\top) \odot M)_{i:} }
 \vee \onenorm{ (\abs{X} \abs{X}^\top) \odot M)_{:j} }
 \le \Ctrim n m \sig^4 \bigg).
\]
There exists an absolute constant $C$ such that
\[
 \Pr\left( \spenorm{((XX^\top )\odot M - \E X\E X^\top ) \odot N}
    \ge C(t+\Ctrim)\sqrt{nm}\sig^2 + m\sig^4
 \right)
 \wle C n^{-1} e^{-t^{1/3}}
\]
whenever
$8\log en\le m\sig^2$,
$6e^2n\sig^2\le 1$,
and
$t \ge C \left( 1\vee\frac{\log^3 m}{\log^3(1/(6en\sig^2))} \right)$.
\end{theorem}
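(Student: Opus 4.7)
The plan is to decompose the hollow Gram matrix into linear and quadratic parts in the centered matrix $\tilde X \coloneqq X - \E X$, then control each piece separately and glue the trimming masks together. Writing
\[
XX^\top - (\E X)(\E X)^\top \weq (\E X)\tilde X^\top + \tilde X (\E X)^\top + \tilde X \tilde X^\top,
\]
and noting that $((\E X)(\E X)^\top) \odot M = (\E X)(\E X)^\top - \diag((\E X)(\E X)^\top)$, the target quantity splits into two cross terms, a hollow self-product $(\tilde X \tilde X^\top) \odot M$, and the diagonal correction. The diagonal entries are $\sum_k (\E X_{ik})^2 \le m \sig^4$ since $|\E X_{ij}| \le \E|X_{ij}| \le \sig^2$, which accounts for the additive $m\sig^4$ term in the bound. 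So the task reduces to controlling the spectral norms of the three pieces on the trimmed submatrix selected by $N$.

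For each of the linear cross terms $(\E X)\tilde X^\top \odot N$ and its transpose, I would invoke Lemma~\ref{lem:AX^T bound}, which bounds $\spenorm{A \tilde X^\top}$ when $A$ is deterministic with entries bounded by $\sig^2$ and $\tilde X$ has centered sub-Poisson entries, under a comparable trimming. Tuning constants so that the output is $\lesssim (t+\Ctrim)\sqrt{nm}\sig^2$ with the right tail exponent matches the target bound.

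For the quadratic piece $(\tilde X \tilde X^\top) \odot M \odot N$, I would use a standard decoupling argument: introducing an independent copy $\tilde X'$ of $\tilde X$, the spectral norm of the hollow product is controlled (up to a universal constant) by that of the decoupled product $\tilde X (\tilde X')^\top$. Conditional on $\tilde X'$, this matrix has independent entries of the form $\sum_k \tilde X_{ik} \tilde X'_{jk}$, each a weighted sum of centered sub-Poisson variables in $\tilde X_{ik}$ with weights $\tilde X'_{jk}$. Under the trimming these weights are $L_1$-controlled, so the conditional distribution admits a sub-Poisson variance proxy that lets us apply Theorem~\ref{the:TrimmedMatrixConcentration} to produce a spectral bound of order $\sqrt{nm}\sig^2$.

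The hard part is \emph{transferring the trimming}: the mask $N$ is built from $|X||X|^\top$ rather than from $\tilde X$ or $\tilde X'$, so the decoupled matrix is not trimmed in exactly the form required by Theorem~\ref{the:TrimmedMatrixConcentration}. To bridge this, I would combine Lemma~\ref{lem:hollow gram matrix row L1 bound}, which shows that at most a controlled fraction of rows and columns are removed by $N$, with the sparsity hypotheses $8 \log(en) \le m\sig^2$ and $6 e^2 n\sig^2 \le 1$; these ensure that with high probability the trimming thresholds for $X$, $\tilde X$, and the decoupled pair coincide up to constants, so that removing the trimmed rows/columns from the decoupled matrix costs only a factor in the constant. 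The cubic root $e^{-t^{1/3}}$ in the probability bound then arises from the three-stage union: one failure probability for the cross-term bound, one for the decoupled self-product, and one for the trimming transfer, each requiring a factor $t^{1/3}$ in the underlying sub-exponential tail in order to yield $t$ after composition.
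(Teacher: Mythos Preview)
Your decomposition and overall strategy are close to the paper's, but there is a genuine gap in the quadratic piece that breaks the argument as written.

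You claim that, conditional on $\tilde X'$, the decoupled product $\tilde X(\tilde X')^\top$ has independent entries. It does not: the entries $(i,j)$ and $(i,j')$ both equal linear combinations of the \emph{same} row $\tilde X_{i1},\dots,\tilde X_{im}$, with different (fixed) weight vectors $\tilde X'_{j\cdot}$ and $\tilde X'_{j'\cdot}$. These are dependent in general, so Theorem~\ref{the:TrimmedMatrixConcentration} is not applicable to $\tilde X(\tilde X')^\top$ directly. The paper's fix is precisely why it uses the \emph{asymmetric} decomposition $(X-\E X)X^\top + (\E X)(X-\E X)^\top$ rather than your symmetric one: keeping the second factor as the uncentered, \emph{integer-valued} $X$ allows a layer decomposition $X=\sum_k X^{(k)}$ where each $X^{(k)}\in\{-1,0,1\}^{n\times m}$ has at most one nonzero entry per column (Lemma~\ref{lem:product of centered and integer matrices}). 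Because different rows of $X^{(k)}$ then have disjoint supports, the entries of $(X-\E X)(X^{(k)})^\top$ \emph{are} conditionally independent, and Theorem~\ref{the:TrimmedMatrixConcentration} applies layer by layer. Your $\tilde X'$ is not integer-valued, so this mechanism is unavailable under your decomposition.

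Two smaller points. First, the $e^{-t^{1/3}}$ tail does not come from a three-way union bound; it arises because the layer decomposition contributes a factor $\sqrt{s}$ (number of layers) on top of the $s$ from the trimmed-matrix bound, giving a norm bound scaling as $s^{3/2}$, and the decoupling step converts this to moments $\E Z^p \lesssim \Gamma(3p+1)$, which yields the $t^{1/3}$ exponent. Second, the trimming transfer is done not via Lemma~\ref{lem:hollow gram matrix row L1 bound} but by an entrywise comparison $|X-\E X||X|^\top \le |X||X|^\top + \text{(row-$L_1$ terms)}$ combined with Lemma~\ref{lem:row L1 bound}, showing that on a high-probability event the mask $N$ is dominated by the mask needed in the decoupling lemma.
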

\begin{proof}
See Appendix~\ref{sec:hollow gram matrix bound}.
\end{proof}

In particular, if $n\sig^2\lesssim m^{-\eps}$ for some arbitrarily small constant $\eps>0$, then the bound
$\spenorm{((XX^\top )\odot M - \E X\E X^\top )\odot N}\lesssim \sqrt{nm}\sig^2 + m\sig^4$ holds with high probability as $n\to\infty$, and if $n\sig^2\lesssim 1$, then $\spenorm{((XX^\top )\odot M - \E X\E X^\top )\odot N} \lesssim \log^3(m)\sqrt{nm}\sig^2 + m\sig^4$ with high probability as $n\to\infty$.

The next result guarantees that
only a small fraction of rows and columns is masked out.

\begin{lemma}
\label{lem:hollow gram matrix row L1 bound}
Let $X\in \Z^{n\times m}$, $n\le m$, be a random matrix with
independent sub-Poisson entries with variance proxy $\sig^2$,
and
$\E\abs{X_{ij}}\le \sig^2$.
Denote $M_{ij} = \indic(i \ne j)$, and let
\[
 \xi_i
 \weq \indic \bigg( \norm{ (\abs{X} \abs{X}^\top) \odot M)_{i:} }_1
 \le (4+5t) nm \sig^4 \bigg).
\]
If $8\log en\le m\sig^2$ and $6e^2n\sig^2\le 1$, then
\[
 \Pr\left( \frac{1}{n}\sum_{i=1}^{n} \xi_i \le 1-s \right)
 \wle \frac{1}{s} \left( \sqrt{t} \exp \left(-\frac{\sqrt{t}nm\sig^4}{10\sqrt{6}}\right)
  + 3n^{-\sqrt{t}/2\sqrt{6}}\right)
\]
for all $s>0$ and
$t\geq 6\vee \frac{96\log^2 m}{\log^2(1/6en\sig^2)}$.
\end{lemma}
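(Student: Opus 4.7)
The plan is to reduce the claim to a single-row tail bound on $W_i := \onenorm{((\abs{X}\abs{X}^\top)\odot M)_{i:}} = \sum_{j\ne i,\,k}\abs{X_{ik}}\abs{X_{jk}}$ and then to establish that bound by sub-Poisson concentration. Applying Markov's inequality to $\sum_i(1-\xi_i)$ gives
\[
\Pr\Bigl(\tfrac{1}{n}\sum_i\xi_i \le 1-s\Bigr) \wle \frac{1}{ns}\sum_{i=1}^n\Pr(\xi_i=0),
\]
so it suffices to prove, for each fixed $i$,
\[
\Pr\bigl(W_i > (4+5t)nm\sigma^4\bigr) \wle \sqrt{t}\exp\bigl(-\tfrac{\sqrt{t}\,nm\sigma^4}{10\sqrt{6}}\bigr) + 3n^{-\sqrt{t}/(2\sqrt{6})}.
\]

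For this row-wise tail, I would decompose $W_i = \sum_k \abs{X_{ik}} V_k$ with $V_k := \sum_{j\ne i}\abs{X_{jk}}$. The $V_k$ are independent across $k$ and jointly independent of the row $X_{i,:}$, and by \SPcite{absolute value} together with \SPcite{independent sum}, each $V_k$ is sub-Poisson with variance proxy at most $n\sigma^2$ and mean at most $n\sigma^2$. I would then split $\Pr(W_i > (4+5t)nm\sigma^4)$ using the event $E = \{V_k \le a\, n\sigma^2 \text{ for all } k\in[m]\}$, with parameter $a = a(t)$ chosen on the scale $\sqrt{t}$. On $E^c$, Bennett's inequality \SPcite{Bennett} applied to each $V_k$ in its logarithmic tail regime, followed by a union bound over $k\in[m]$, gives $\Pr(E^c) \lesssim n^{-\sqrt{t}/(2\sqrt{6})}$; the hypothesis $t\geq 6\vee 96\log^2 m/\log^2(1/(6en\sigma^2))$ is exactly what forces $V_k$ into the logarithmic Bennett regime and lets the union-bound factor $m$ be absorbed into the polynomial-in-$n$ decay.

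On $E$, conditional on $V=(V_1,\dots,V_m)$, $W_i = \sum_k V_k\abs{X_{ik}}$ is a weighted sum of independent sub-Poisson variables with uniformly bounded weights $V_k\le an\sigma^2$. A conditional Bennett bound \SPcite{Bennett} with this uniform weight, combined with an auxiliary tail bound on the total sum $\sum_k V_k$ (itself sub-Poisson with variance proxy at most $nm\sigma^2$ and mean at most $nm\sigma^2$), yields the exponential term $\sqrt{t}\exp(-\sqrt{t}nm\sigma^4/(10\sqrt{6}))$. Here the constant $4nm\sigma^4$ absorbs both $\E W_i \le nm\sigma^4$ and a bounded Bennett deviation buffer for $\sum_k V_k$, while the $5t\,nm\sigma^4$ piece is matched to Bennett's large-deviation rate at scale $an\sigma^2$.

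The principal obstacle will be the constant bookkeeping: calibrating $a(t)$ and tracking the numerical constants so the two pieces combine into precisely $(4+5t)$, $1/(10\sqrt{6})$, and $1/(2\sqrt{6})$. The $\sqrt{t}$ (rather than $t$) scaling in both exponents reflects that on $E$ the effective scale of $W_i$ is $\sim a n\sigma^2 \sim \sqrt{t}\,n\sigma^2$, so the two Bennett rates naturally inherit a common $\sqrt{t}$. The assumption $8\log en\le m\sigma^2$ places the conditional Bennett bound on $W_i$ in its sub-exponential tail, and $6e^2 n\sigma^2\le 1$ places the $V_k$-concentration in its logarithmic regime, which together allow the constants to line up cleanly.
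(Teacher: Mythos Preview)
The Markov-inequality reduction to a single-row tail bound on $W_i$ is correct and matches the paper's final step. The problem is the conditioning event $E=\{V_k\le a\,n\sigma^2\ \forall k\}$ with $a\asymp\sqrt{t}$: under the hypothesis $6e^2n\sigma^2\le 1$, the threshold $a\,n\sigma^2\le \sqrt{t}/(6e^2)$ is typically below $1$, while $V_k=\sum_{j\ne i}|X_{jk}|$ is a nonnegative \emph{integer} (since $X\in\Z^{n\times m}$). Hence $E$ forces every column of $X$ outside row $i$ to vanish, and $\Pr(E^c)$ is close to $1$, not $n^{-\sqrt{t}/(2\sqrt{6})}$. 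Even ignoring integrality, Bennett's inequality for $V_k$ (variance proxy $\sim 2n\sigma^2$, deviation $\sim(a-1)n\sigma^2$) yields a rate $\sim a\,n\sigma^2\log a$, which is $O(1)$ because $n\sigma^2\ll 1$; the union bound over $m$ columns then gives nothing. The assumption $t\ge 96\log^2 m/\log^2(1/(6en\sigma^2))$ does not rescue this, since it controls a $\log(1/(n\sigma^2))$ factor, not the vanishing prefactor $n\sigma^2$ in the exponent.

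The paper proceeds differently. It first splits $|X_{ik}X_{jk}|\le |(X_{ik}-\E X_{ik})X_{jk}|+|\E X_{ik}||X_{jk}-\E X_{jk}|+|\E X_{ik}||\E X_{jk}|$, reducing the main term to $\|(|X_1||X_2|^\top)_{1:}\|_1$ with $X_1=(X-\E X)_{i:}$ and $X_2=X_{[n]\setminus\{i\},:}$. This is then handled by Lemma~\ref{lem:product of centered and integer matrices}(i), whose key device is an \emph{integer layer decomposition} of $X_2$: write $X_2=\sum_k X_2^{(k)}$ where each $X_2^{(k)}\in\{0,\pm1\}^{(n-1)\times m}$ has at most one nonzero entry per column, and bound the number of nonzero layers $K_{\max}=\max_l\|(X_2)_{:l}\|_1$ at an \emph{absolute} integer level $k_{\max}=\lfloor\sqrt{t/6}\rfloor$ (not at $an\sigma^2$). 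The Bennett bound $\Pr(K_{\max}\ge k_{\max})\le m(3en\sigma^2/k_{\max})^{k_{\max}}$ then decays like $(n\sigma^2)^{\sqrt{t}}$, which is precisely where the assumed lower bound on $t$ absorbs the factor $m$ and produces the $n^{-\sqrt{t}/(2\sqrt{6})}$ term. Your approach could be repaired by conditioning on $\{\max_k V_k\le k_{\max}\}$ with $k_{\max}\asymp\sqrt{t}$ an absolute integer, but the subsequent conditional step also needs the row-sum bound $\max_j\|(X_2)_{j:}\|_1\lesssim m\sigma^2$ from Lemma~\ref{lem:row L1 bound} to get the right variance proxy for the layered sums; this is the second ingredient you are missing.
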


\begin{proof}
See Appendix~\ref{sec:hollow gram matrix row L1 bound}.
\end{proof}

In particular, if $nm\sig^4\to\infty$ and $n\to\infty$, then one can choose $s\to0$ to approach zero sufficiently slowly to obtain a high-probability event.

\subsection{Proof of Theorem~\ref{thm:Weak consistency}}
\label{subsec:Weak consistency}
Let $Y = \mat_k \cY$ be the mode-$k$ matricization of the data tensor,
and denote its width by $m_k = n_1 \cdots n_d / n_k$.
Observe first that $n_k \gg 1$ due to $n_k \gg r_k^{1/3}$.
The scaling assumptions~\eqref{eq:weak consistency regime} then imply that
$m_k \gg m_k \rho \gg \log n_k \gg 1$ and $n_k \rho \ll 1$.
Therefore, the inequalities
$8 \log e n_k \le m_k \rho$ and
$n_k \rho \le 6^{-1} e^{-2}$ needed in
Theorem~\ref{thm:hollow gram matrix bound} and
Lemma~\ref{lem:hollow gram matrix row L1 bound} hold eventually.
Algorithm~\ref{alg:Spektraaliklusterointi} starts by
computing a trimmed Gram matrix
\[
 A
 \weq Y Y^\top \odot M \odot N,
\]
using binary matrices $M,N \in \R^{n_k\times n_k}$, where
$M_{ij}=\I\set{i\ne j}$,
and $N = \xi \xi^\top$ is
determined by the indicator vector $\xi \in \R^{n_k}$ with entries
\[
 \xi_i
 \weq \I\set*{ \norm{ (\abs{Y} \abs{Y}^\top) \odot M)_{i:} }_1 \le \Ctrim \rho^2 n_k m_k}.
\]
Lemma~\ref{lem:hollow gram matrix row L1 bound} ensures that 
$N$ masks out only a small fraction of the Gram matrix. 
In detail, the needed sub-Poisson variance proxy is $2\rho$ since the lemma also requires that $\E\abs{Y_{ij}}\leq\E\abs{Y_{ij}-\E Y_{ij}}+\abs{\E Y_{ij}}\leq 2\rho$. Consequently, the trimming constant is $\Ctrim/4$. Hence, there exists an absolute constant $C_1$ such that 
setting $\Ctrim\ge C_1(1\vee \frac{\log^2 m_k}{\log^2(1/12en_k\rho)})$ gives
\[
\begin{split}
\Pr\left(\frac{1}{n_k}\norm{1-\xi}_1 \ge s\right)
&\wle \frac{1}{s}\left(\sqrt{\Ctrim}e^{-n_1\cdots n_d\rho^2} + 3n_k^{-1}\right).
\end{split}
\]
Notice that eventually $12en_k\rho\leq m_k^{-\varepsilon}$ and hence $\frac{\log m_k}{\log(1/12en_k\rho)}\leq\varepsilon^{-1}$. Here we may choose $e^{-n_1\cdots n_d\rho^2}+n_k^{-1}\ll s\ll r_k^{-3}$ since $r_k^{3}e^{-n_1\cdots n_d\rho^2}\leq\frac{r_k^3}{n_1\cdots n_d\rho^2}\ll 1$ and $\frac{r_k^{3}}{n_k}\ll 1$. The lower bound of $s$ ensures a high-probability event and the upper bound will be needed later.
The number of zeroed entries is
\[
 \fronorm{1-N}^2
 \weq n_k^2 - \norm{\xi}_1^2
 \wle n_k^2 - ((1-s)n_k)^2
 \wle 2sn_k^2.
\]
Denote $X = \E Y$.
The resulting  signal loss is bounded from above by
\begin{align}
 \label{eq:signal loss from trimming}
 &\spenorm{(X X^\top) \odot (1-N)} 
 \le \fronorm{(X X^\top) \odot (1-N)} \nonumber \\
 &\le \max_{i,j}\abs{(XX^\top )_{ij}} \fronorm{1-N}
 \leq \sqrt{2s}n_1\cdots n_d\rho^{2}.
\end{align}
By combining inequality \eqref{eq:signal loss from trimming} with Theorem \ref{thm:hollow gram matrix bound} with sub-Poisson variance proxy $2\rho$ and trimming constant $\Ctrim/4$, there exists an absolute constant $C_2$ such that
\[
 \begin{split}
 &\Pr\left(\spenorm{(Y Y^\top \odot M \odot N - XX^\top } \ge u\right) \\
 &\leq C_2n_k^{-1}e^{-t^{1/3}}
 \quad \text{for all }t\geq C_2\left(1\vee \frac{\log^3 m_k}{\log^3(1/12en_k\rho)}\right),
 \end{split}
\]
where
\[
u
= \sqrt{2s}n_1\cdots n_d\rho^{2} + C_2(t+\Ctrim)\sqrt{n_1\cdots n_d}\rho + \frac{4n_1\cdots n_d\rho^2}{n_k}.
\]
Notice that eventually $12en_k\rho\leq m_k^{-\varepsilon}$ and hence $\frac{\log m_k}{\log(1/12en_k\rho)}\leq\varepsilon^{-1}$. Set $t=C_2(1\vee \eps^{-3})$ to be a constant. 

Next, define 
\[
\Delta
= \min_{i_k,i_k':z_k(i)\neq z_k(i')}\norm{(XX^\top )_{i_k:} - (XX^\top )_{i_k':}}.
\]
After trimming, Algorithm~\ref{alg:Spektraaliklusterointi} proceeds by computing a best rank-$r_k$ approximation $\hat{U}\hat{\Lambda}\hat{U}^\top$ of $YY^\top \odot M\odot N$. Notice that Algorithm \ref{alg:Spektraaliklusterointi} applies $k$-means clustering to the rows of $\hat{U}\hat{\Lambda}$ instead of the rows of $\hat{U}\hat{\Lambda}\hat{U}^\top$ as in Theorem \ref{thm:k-means}. However, given the centroids $\hat{\theta}_1,\dots,\hat{\theta}_{r_k}\in\R^{r_k}$ (treated as row vectors) and the membership vector $\hat{z}_k\in[r_k]^{n_k}$ computed in Algorithm \ref{alg:Spektraaliklusterointi}, the transformed centroids $\hat{\theta}_{1}\hat{U}^\top ,\dots,\hat{\theta}_{r_k}\hat{U}^\top \in\R^{n_k}$ and the same membership vector $\hat{z}_k\in[r_k]^{n_k}$ solve the $k$-means problem quasi-optimally as assumed in Theorem \ref{thm:k-means}. To prove this, first observe that
\[
\begin{split}
&\sum_{j}\norm{(\hat{U}\hat{\Lambda}\hat{U}^\top)_{j:} - \hat{\theta}_{\hat{z}_k(j)}\hat{U}^\top }^{2}
= \sum_{j}\norm{((\hat{U}\hat{\Lambda})_{j:} - \hat{\theta}_{\hat{z}_k(j)})\hat{U}^\top }^{2}
= \sum_{j}\norm{(\hat{U}\hat{\Lambda})_{j:} - \hat{\theta}_{\hat{z}_k(j)}}^{2} \\
&\leq Q\min_{\breve{z},\breve{\theta}}\sum_{j}\norm{(\hat{U}\hat{\Lambda})_{j:} - \breve{\theta}_{\breve{z}(j)}}^{2}
= Q\min_{\breve{z},\breve{\theta}}\sum_{j}\norm{((\hat{U}\hat{\Lambda})_{j:} - \breve{\theta}_{\breve{z}(j)})\hat{U}^\top}^{2}. 
\end{split}
\]
By minimizing over centroids of the form $\breve{\theta}_{j}\hat{U}$ with $\breve{\theta}_j\in\R^{n_k}$ instead of $\breve{\theta}_j\in\R^{r_k}$, we get a further upper bound
\[
\begin{split}
 &Q\min_{\breve{z},\breve{\theta}}\sum_{j}\norm{((\hat{U}\hat{\Lambda}\hat{U}^\top)_{j:} - \breve{\theta}_{\breve{z}(j)})\hat{U}\hat{U}^\top}^{2} 
 \wle Q\min_{\breve{z},\breve{\theta}}\sum_{j}\norm{(\hat{U}\hat{\Lambda}\hat{U}^\top)_{j:} - \breve{\theta}_{\breve{z}(j)}}^{2}.
\end{split}
\]
This justifies the applicability of Theorem~\ref{thm:k-means} by which there exists an absolute constant $C_3$ satisfying
\[
\begin{split}
&\Pr\left(\loss(z_k,\hat{z}_k)\geq C_3Q\frac{r_ku^2}{n_k\Delta^2}\right) \\
&\leq \Pr\left(\frac{1}{n_k}\sum_{i=1}^{n_k}\I\set*{\sum_{j',l:j'\neq i}\abs{Y_{il}Y_{j'l}} \ge \Ctrim n_1\cdots n_d\rho^2}\geq s\right) \\&\quad + \Pr\left(\spenorm{(YY^\top )\odot M\odot N - XX^\top }\geq u\right) \\
&\leq \frac{1}{s}\left(\sqrt{\Ctrim}e^{-n_1\cdots n_d\rho^2} + \frac{3}{n_k}\right) + \frac{C_2}{n_k}
\end{split}
\]
when $\min_l\abs{z_k^{-1}\set{l}}\geq C_3 Q \frac{r_ku^{2}}{\Delta^2}$. By Lemma \ref{lem:separation of clusters}, there exists a constant $C_{4}$ such that
$\Delta \ge C_4\frac{\prod_{k'}\al_{k'}}{\sqrt{\al_k}} \frac{\de_k^2}{\sqrt{n_k r_k}} n_1 \cdots n_d \rho^2$. Now
\[
\begin{split}
 \frac{\sqrt{r_k}u}{\sqrt{n_k}\Delta}
 &\le \frac{\sqrt{2s}n_1\cdots n_d\rho^{2} + C_2(t+\Ctrim)\sqrt{n_1\cdots n_d}\rho
 + \frac{4n_1\cdots n_d\rho^2}{n_k}}{C_4\frac{\prod_{k'}\al_{k'}}{\sqrt{\al_k}} \frac{\de_k^2}{r_k} n_1 \cdots n_d \rho^2} \\
 &= \frac{r_k\sqrt{\al_k}}{C_4\de_k^2\prod_{k'}\al_{k'}}\left(\sqrt{2s} + \frac{C_2(t+\Ctrim)}{\sqrt{n_1\cdots n_d}\rho} + \frac{4}{n_k}\right).
\end{split}
\]
Now $\min_l\abs{z_k^{-1}\set{l}}\geq C_3Q\frac{r_ku^{2}}{\Delta^2}$ holds eventually by assumptions $s\ll r_k^{-3}$, $\rho\gg r_k^{3/2}(n_1\cdots n_d)^{-1/2}$ and $n_k\gg r_k^{3}\geq r_k^{3/2}$, namely
\[
\begin{split}
\frac{C_3Q\frac{r_ku^{2}}{\Delta^2}}{\min_l\abs{z_k^{-1}\set{l}}}
&\leq \frac{C_3Qn_k\left(\frac{r_k\sqrt{\al_k}}{C_4\de_k^2\prod_{k'}\al_{k'}}\left(\sqrt{2s} + \frac{C_2(t+\Ctrim)}{\sqrt{n_1\cdots n_d}\rho} + \frac{4}{n_k}\right)\right)^2}{\frac{\alpha_k n_k}{r_k}} \\
&= \frac{C_3Qr_k^3}{C_4^2\de_k^4\prod_{k'}\al_{k'}^2}\left(\sqrt{2s} + \frac{C_2(t+\Ctrim)}{\sqrt{n_1\cdots n_d}\rho} + \frac{4}{n_k}\right)^2
\ll 1.
\end{split}
\]
Recall that $e^{-n_1\cdots n_d\rho^2}+n_k^{-1}\ll s\ll r_k^{-1}$ and $t=C_2(1\vee \eps^{-3})$. Now we have
\[
\begin{split}
&\Pr\Bigg(\loss(z_k,\hat{z}_k)\geq \underbrace{C_3Q\left(\frac{r_k\sqrt{\al_k}}{C_4\de_k^2\prod_{k'}\al_{k'}}\left(\sqrt{2s} + \frac{C_2(C_2(1\vee \eps^{-3})+\Ctrim)}{\sqrt{n_1\cdots n_d}\rho} + \frac{4}{n_k}\right)\right)^2}_{\ll 1}\Bigg) \\
&\leq \underbrace{\frac{1}{s}\left(\sqrt{\Ctrim}e^{-n_1\cdots n_d\rho^2} + \frac{3}{n_k}\right) + \frac{C_2}{n_k}}_{\ll 1},
\end{split}
\]
that is, $\loss(z_k,\hat{z}_k)\to 0$ in probability. This concludes the proof of Theorem \ref{thm:Weak consistency}. \qed

\ifims
\bibliographystyle{imsart-number}
\fi

\ifarxiv
\addcontentsline{toc}{section}{References}
\bibliographystyle{plain}
\fi

\bibliography{lslReferences-2025-12-04}

\clearpage

\begin{appendix}

\section{Preliminaries}\label{app:Preliminaries}

A random variable $X$ and its probability distribution is called \emph{upper sub-Poisson} if there exists $\sigma^2 \ge 0$ such that
\[
\E e^{\lambda (X - \E X)} \le e^{\sigma^2(e^\lambda - 1 - \lambda)}
\qquad \text{for all $\la \ge 0$}.
\]
Such a number $\sig^2$ is called an upper sub-Poisson variance proxy of $X$. The random variable $X$ is called
\emph{lower sub-Poisson} if $-X$ is upper sub-Poisson, and \emph{sub-Poisson} if it is both upper and lower sub-Poisson.
The following result summarizes key tail bounds for upper sub-Poisson random variables \cite{Leskela_Valimaa_2025}.
Inequality \eqref{eq:Bennett} is known as Bennett's inequality and inequalities \eqref{eq:Bernstein1} and \eqref{eq:Bernstein2} are known as Bernstein's inequalities.

\begin{lemma}
\label{lem:Chernoff}
If a random variable $X$ has an upper sub-Poisson variance proxy $\sigma^2$,
then for all $t\ge 0$,
\begin{align}
 \Pr\left(X\ge t\right)
 &\wle e^{-\sig^2}\left(\frac{e\sig^2}{\sig^2 + t}\right)^{\sig^2 + t} \label{eq:Bennett} \\
 &\wle \exp\left(-\frac{t^2/2}{\sig^2+t/3}\right) \label{eq:Bernstein1}\\
 &\wle \exp\left(-\left(\frac{t^2}{4\sig^2}\wedge\frac{3t}{4}\right)\right). \label{eq:Bernstein2}
\end{align}
\end{lemma}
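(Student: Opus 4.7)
The plan is to prove \eqref{eq:Bennett} by the standard Cram\'er--Chernoff method and then derive the two Bernstein forms \eqref{eq:Bernstein1}--\eqref{eq:Bernstein2} by elementary analytic manipulations of the Bennett rate function. Throughout the statement $\Pr(X \ge t)$ should be read as the tail bound for the centered variable $X - \E X$, which is the object controlled by the upper sub-Poisson MGF hypothesis.

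First, for any $\lambda \ge 0$, Markov's inequality applied to $e^{\lambda (X - \E X)}$ combined with the hypothesis yields
\[
\Pr(X - \E X \ge t) \wle e^{-\lambda t}\,\E e^{\lambda(X - \E X)} \wle \exp\bigl(\sigma^2(e^\lambda - 1 - \lambda) - \lambda t\bigr).
\]
Minimizing the right-hand side over $\lambda \ge 0$ gives the optimizer $\lambda^\ast = \log(1 + t/\sigma^2)$, which is nonnegative precisely because $t \ge 0$, so the one-sided MGF hypothesis suffices. Substituting $\lambda^\ast$, collecting logarithms, and rearranging the factor $e^{\sigma^2 + t}$ produces exactly the stated form $e^{-\sigma^2}\bigl(e\sigma^2/(\sigma^2 + t)\bigr)^{\sigma^2+t}$ of \eqref{eq:Bennett}.

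To pass from \eqref{eq:Bennett} to the Bernstein forms, I would rewrite the Bennett exponent as $-\sigma^2 h(t/\sigma^2)$, where $h(u) = (1+u)\log(1+u) - u$, and then apply two elementary lower bounds for $h$ on $[0,\infty)$. The inequality $h(u) \ge \tfrac{u^2/2}{1 + u/3}$ (verified, for instance, by showing that $(1 + u/3)h(u) - u^2/2$ together with its first two derivatives vanishes at zero while its third derivative is nonnegative) directly implies \eqref{eq:Bernstein1}. Splitting into the cases $t \le 3\sigma^2$ and $t > 3\sigma^2$ then bounds the denominator $\sigma^2 + t/3$ by $2\sigma^2$ and $2t/3$ respectively, yielding the piecewise minimum in \eqref{eq:Bernstein2}.

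I do not expect any serious obstacle: the chain is the classical Cram\'er transform computation for the centered Poisson, and the only piece requiring care is the elementary convexity-type bound for $h$. The main stylistic subtlety is the interpretation of $\Pr(X \ge t)$ as a centered tail probability, which is forced on us by the form of the sub-Poisson definition.
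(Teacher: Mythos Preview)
Your proposal is correct and follows essentially the same route as the paper's (commented-out) proof: Cram\'er--Chernoff with optimizer $\lambda^\ast=\log(1+t/\sigma^2)$ for \eqref{eq:Bennett}, and the same denominator split $\sigma^2+t/3\le 2(\sigma^2\vee t/3)$ for \eqref{eq:Bernstein2}. The only minor divergence is in the passage to \eqref{eq:Bernstein1}: you bound the Bennett rate function $h(u)=(1+u)\log(1+u)-u$ from below, whereas the paper instead bounds the MGF exponent via the geometric-series estimate $e^\lambda-1-\lambda\le \tfrac{\lambda^2/2}{1-\lambda/3}$ and re-optimizes; these are dual standard arguments and yield the same constants. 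Your remark that the statement must be read for the centered variable $X-\E X$ is exactly right and matches how the lemma is invoked throughout the paper.
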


The following result summarizes key properties of the function
\begin{equation}
\label{eq:BennettBound}
\beta(t, \sig^2)
\weq e^{-\sig^2} \left( \frac{e \sig^2}{\sig^2 + t} \right)^{\sig^2 + t}.
\end{equation}
appearing in Bennett's inequality \eqref{eq:Bennett}.

\begin{lemma}
\label{the:BennettBijection}
For any $\sig^2 > 0$, the function $t \mapsto \beta(t,\sig^2)$ defined by \eqref{eq:BennettBound}
is a strictly decreasing bijection from $[0,\infty)$ into $(0,1]$, and bounded by
$\beta(t, \sig^2) \le \left( \frac{e \sig^2}{t} \right)^t$ for all $t > 0$.
\end{lemma}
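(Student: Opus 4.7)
The plan is straightforward calculus: differentiate, check endpoints, and reduce the stated upper bound to a one-variable inequality.

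First I would check the boundary value $\beta(0,\sig^2) = e^{-\sig^2}(e\sig^2/\sig^2)^{\sig^2} = e^{-\sig^2}e^{\sig^2} = 1$, which gives the upper end of the range. Next I would pass to logarithms and write
\[
\log\beta(t,\sig^2)
\weq t - (\sig^2+t)\log\!\left(1+\frac{t}{\sig^2}\right),
\]
then differentiate:
\[
\frac{\partial}{\partial t}\log\beta(t,\sig^2)
\weq 1 - \log\!\left(1+\frac{t}{\sig^2}\right) - (\sig^2+t)\cdot\frac{1/\sig^2}{1+t/\sig^2}
\weq -\log\!\left(1+\frac{t}{\sig^2}\right),
\]
which is strictly negative for $t>0$. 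This proves strict monotonicity and (combined with $\beta(0,\sig^2)=1$) strict injectivity. To obtain the range $(0,1]$, I would note that $(\sig^2+t)\log(1+t/\sig^2) \sim t\log(t/\sig^2) \gg t$ as $t\to\infty$, so $\log\beta(t,\sig^2)\to-\infty$, i.e.\ $\beta(t,\sig^2)\to 0$. Continuity of $\beta$ and the intermediate value theorem then give that $\beta(\cdot,\sig^2)$ maps $[0,\infty)$ bijectively onto $(0,1]$.

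For the bound $\beta(t,\sig^2) \le (e\sig^2/t)^t$, I would take logarithms and rearrange; after cancellation the inequality reduces to
\[
(\sig^2+t)\log(\sig^2+t) \wge \sig^2\log\sig^2 + t\log t
\qquad (t>0).
\]
I would verify this by setting
\[
h(t) \weq (\sig^2+t)\log(\sig^2+t) - \sig^2\log\sig^2 - t\log t,
\]
computing $h'(t) = \log(\sig^2+t) + 1 - \log t - 1 = \log(1+\sig^2/t) > 0$, and noting $\lim_{t\to 0^+}h(t) = 0$ (since $t\log t\to 0$), so $h(t)\ge 0$ throughout. This is simply the superadditivity of $x\mapsto x\log x$ on $(0,\infty)$.

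No step is a real obstacle; the only mild care needed is in evaluating the limit $t\log t\to 0$ at the left endpoint when proving the superadditivity inequality, and in checking surjectivity onto $(0,1]$ rather than merely injectivity.
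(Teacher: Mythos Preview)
Your proof is correct. The monotonicity argument is identical to the paper's: both compute $\log\beta(t,\sig^2)=t-(\sig^2+t)\log(1+t/\sig^2)$ and differentiate to get $-\log(1+t/\sig^2)$. You are in fact more thorough on the bijection part, explicitly checking $\beta(0,\sig^2)=1$ and $\beta(t,\sig^2)\to 0$; the paper only verifies strict decrease.

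For the upper bound the two arguments diverge slightly. The paper simply drops the $\sig^2$ from the coefficient $(\sig^2+t)$ in front of $\log(1+t/\sig^2)$ (valid since that logarithm is nonnegative), obtaining $\log\beta\le t-t\log(1+t/\sig^2)=t\log\frac{e}{1+t/\sig^2}\le t\log\frac{e\sig^2}{t}$, the last step using $1+t/\sig^2\ge t/\sig^2$. Your route instead rewrites the inequality as superadditivity of $x\mapsto x\log x$ and proves that by a separate derivative computation. Both are valid; the paper's version is a touch more direct, while yours identifies the underlying convexity/superadditivity structure explicitly.
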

\begin{proof}
We note that $\beta(t, \sig^2) = e^{\sig^2 \ga(t/\sig^2)}$ where
$\ga(s) = s - (1+s)\log(1+s)$.
Because $\ga'(s) = - \log(1+s)$, we find that
$\frac{d}{dt} \beta(t,\sig^2) = \beta(t,\sig^2) \ga'(t/\sig^2) = - \beta(t,\sig^2) \log(1+t/\sig^2)$.
Because $\beta(t,\sig^2) > 0$, it follows that $\frac{d}{dt} \beta(t,\sig^2) < 0$ for all $t$.

To prove the upper bound, a simple computation yields the identity
\[
\log \beta(t,\sig^2)
\weq t - (\sig^2+t) \log(1+t/\sig^2),
\]
from which we find that
$\log \beta(t,\sig^2) \le t - t \log(1+t/\sig^2) = t \log \frac{e}{1+t/\sig^2} \leq \log \left( \frac{e \sig^2}{t} \right)^t$.
\end{proof}

The following result is used in obtaining a cleaner version of a Bernstein's inequality in Lemma~\ref{lem:row L1 bound}.

\begin{lemma}
\label{lem:inverse of minimum}
If $f,g:\R_{\ge 0}\to\R_{\ge 0}$ are strictly increasing  bijections, then $f\wedge g$ is a strictly increasing bijection with an inverse given by $(f\wedge g)^{-1} = f^{-1}\vee g^{-1}$.
\end{lemma}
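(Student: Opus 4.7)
The plan is to verify the three claims about $h := f\wedge g$ in sequence: strict monotonicity, surjectivity (together with the formula for the inverse), and injectivity.

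First, I would show $h$ is strictly increasing. Fix $0 \le x < y$. Since $f$ and $g$ are strictly increasing, $f(x) < f(y)$ and $g(x) < g(y)$. Without loss of generality assume $h(x) = f(x)$, i.e., $f(x) \le g(x)$. If $h(y) = f(y)$, then $h(y) = f(y) > f(x) = h(x)$, and if instead $h(y) = g(y)$, then $h(y) = g(y) > g(x) \ge f(x) = h(x)$. In either case $h(x) < h(y)$. In particular, $h$ is injective.

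Next, I would guess the inverse formula and verify it directly. Fix $y \in \R_{\ge 0}$ and set $a = f^{-1}(y)$, $b = g^{-1}(y)$. Let $x = a \vee b = (f^{-1}\vee g^{-1})(y)$. Suppose $a \le b$ (the other case is symmetric). Then $f(x) = f(b) \ge f(a) = y$ and $g(x) = g(b) = y$, so $h(x) = f(x)\wedge g(x) = y$. This simultaneously shows surjectivity of $h$ onto $\R_{\ge 0}$ and establishes the identity $h\circ(f^{-1}\vee g^{-1}) = \id$. Combined with the injectivity from the first step, $h$ is a bijection and its inverse is $f^{-1}\vee g^{-1}$, as claimed.

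The proof is essentially a short case analysis rather than a deep argument, so I do not expect any real obstacle; the only step that requires minor care is ensuring that the case split $h(x) = f(x)$ versus $h(x) = g(x)$ is carried out symmetrically in the monotonicity argument, and that in the surjectivity step one takes the maximum $a\vee b$ rather than $a\wedge b$ (the latter need not be a preimage of $y$ under $h$, since at the smaller of $a,b$ the other function is strictly below $y$).
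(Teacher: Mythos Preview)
Your proof is correct and follows essentially the same elementary case-analysis approach as the paper. The only minor difference is the direction of the identity you verify: you show $h\circ(f^{-1}\vee g^{-1})=\id$ (right inverse, which directly yields surjectivity), whereas the paper shows $(f^{-1}\vee g^{-1})\circ h=\id$ (left inverse); both combine with the strict monotonicity step to give the bijection and the inverse formula.
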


\begin{proof}
If $0\le x_1<x_2$, then $f(x_1)<f(x_2)$ and $g(x_1)<g(x_2)$ giving $f(x_1)\wedge g(x_1)\le f(x_1)<f(x_2)$ and $f(x_1)\wedge g(x_1)\le g(x_1)<g(x_2)$, that is, $f(x_1)\wedge g(x_1)<f(x_2)\wedge g(x_2)$ and $f\wedge g$ is strictly increasing. 

Suppose $f(x)\wedge g(x) = y$. Then either $f(x)=y$ or $g(x)=y$ and hence $f^{-1}(y)=x$ or $g^{-1}(y)=x$. If $f(x)\ge g(x) = y$, then $f^{-1}(y)\le x = g^{-1}(y)$ (since $f$ is increasing) and hence $x = f^{-1}(y)\vee g^{-1}(y)$. Similarly, if $y = f(x)\le g(x)$, then $g^{-1}(y)\le x = f^{-1}(y)$ (since $g$ is increasing) and hence $x = f^{-1}(y)\vee g^{-1}(y)$. This shows that $x = f^{-1}(y)\vee g^{-1}(y)$.
\end{proof}

\section{Proof of Theorem \ref{the:TrimmedMatrixConcentration}}\label{app:the:TrimmedMatrixConcentration}

The proofs mainly follow \cite{Feige_Ofek_2005}, which focuses on Bernoulli distributed entries (random graphs). The proof technique originates from \cite{Friedman_Kahn_Szemeredi_1989}, where an analogous result is given for random regular graphs.

The following lemma in its original form \cite{Feige_Ofek_2005} bounds sums of the form $\sum_{i\in S}\sum_{j\in T}\abs{X_{ij}}$, where $X\in\R^{n\times m}$ is a random matrix and $S\subset[n]$ and $T\subset[m]$. When $X$ corresponds to an adjacency matrix of a graph, then this sum counts the number of (directed) edges from the vertex set $S$ to the vertex set $T$. However, here the statement is modified to draw a clearer connection to the spectral norm $\spenorm{X}=\sup_{\norm{u},\norm{v}\le 1}\sum_{i,j}u_i X_{ij} v_j$.

\begin{lemma}
\label{the:SubmatrixSums}
For any random matrix $X\in\R^{n\times m}$ with independent centered entries satisfying
$\E e^{\la X_{ij}} \le e^{\sig^2(e^{\abs{\la}}-1-\abs{\la})}$ for all $\la \in \R$, and any $t \ge 1$,
\[
\pr\bigg( \bigcup_{S,T} \Big\{ - \log \beta(X_{S,T}, \abs{S} \abs{T} \sig^2) \ge (1+t) \tau_{\abs{S},\abs{T}} \Big\} \bigg)
\wle (e^2nm)^{-t},
\]
where the union is taken over nonempty sets $S \subset [n]$ and $T \subset [m]$,
\[
X_{S,T}
\weq \max_{u \in \{\pm 1\}^S} \max_{v \in \{\pm 1\}^T} \sum_{i \in S} \sum_{j \in T} u_i X_{ij} v_j,
\qquad
\tau_{k,\ell}
\weq k \log \frac{2en}{k} + \ell \log \frac{2em}{\ell},
\]
and the function $\beta$ is defined by \eqref{eq:BennettBound}.
\end{lemma}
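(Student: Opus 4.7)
The plan is to obtain the bound via a classical Chernoff plus union-bound argument in the style of Feige and Ofek, with the Bernoulli concentration replaced by the sub-Poisson Bennett bound from Lemma~\ref{lem:Chernoff}. First, I would fix a quadruple $(S,T,u,v)$ with $u\in\{\pm1\}^S$ and $v\in\{\pm1\}^T$, and analyze the linear form $Z_{u,v}=\sum_{i\in S}\sum_{j\in T}u_iX_{ij}v_j$. Each summand $u_iv_jX_{ij}$ is centered, and since $|u_iv_j|=1$ the hypothesis $\E e^{\lambda X_{ij}}\le e^{\sigma^2(e^{|\lambda|}-1-|\lambda|)}$ is preserved. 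By independence, $Z_{u,v}$ is upper sub-Poisson with variance proxy $|S||T|\sigma^2$.

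Next, I would apply Bennett's inequality~\eqref{eq:Bennett} together with the bijectivity from Lemma~\ref{the:BennettBijection}. Since $z\mapsto\beta(z,|S||T|\sigma^2)$ is a strictly decreasing bijection from $[0,\infty)$ onto $(0,1]$, the event $\{-\log\beta(Z_{u,v},|S||T|\sigma^2)\ge s\}$ coincides with $\{Z_{u,v}\ge\beta^{-1}(e^{-s},|S||T|\sigma^2)\}$, which has probability at most $e^{-s}$. Setting $s=(1+t)\tau_{|S|,|T|}$ bounds the probability of the bad event for a single $(S,T,u,v)$. Since $X_{S,T}=\max_{u,v}Z_{u,v}$ and $\beta$ is decreasing, a union bound over the $2^{|S|+|T|}$ sign patterns controls the event with $X_{S,T}$ in place of $Z_{u,v}$.

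The remaining step is counting. Applying a further union bound over subsets $S\subset[n]$ and $T\subset[m]$ with the estimate $\binom{n}{k}\le(en/k)^k$ yields
\[
\binom{n}{k}\binom{m}{\ell}\,2^{k+\ell}\wle (2en/k)^k(2em/\ell)^\ell \weq e^{\tau_{k,\ell}},
\]
which precisely matches the definition of $\tau_{k,\ell}$. Thus the total failure probability is at most $\sum_{k=1}^n\sum_{\ell=1}^m e^{-t\tau_{k,\ell}}$.

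The main obstacle is verifying that this residual double sum is bounded by $(e^2nm)^{-t}$. The sum factorizes as $\bigl(\sum_k(k/2en)^{tk}\bigr)\bigl(\sum_\ell(\ell/2em)^{t\ell}\bigr)$, and each univariate series is geometrically dominated by its leading term: the ratio of consecutive terms is controlled by $(k/2n)^t$, so for $t\ge1$ one readily checks $\sum_k(k/2en)^{tk}\le 2(2en)^{-t}$, and similarly in $\ell$. Multiplying yields an upper bound of $4(2en)^{-t}(2em)^{-t}=4^{1-t}(e^2nm)^{-t}$, which is at most $(e^2nm)^{-t}$ for $t\ge1$. The subtlety in this last step is confirming that the geometric decay kicks in uniformly in $n$ and $m$; handling the first few terms of each series carefully, rather than using a crude union-style bound, is what avoids losing the $n$ and $m$ factors in the final probability.
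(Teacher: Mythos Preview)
Your proposal is correct and follows essentially the same approach as the paper: fix $(S,T,u,v)$, apply Bennett's inequality to the sub-Poisson sum with variance proxy $|S||T|\sigma^2$, union bound over the $2^{k+\ell}$ sign patterns and over subsets via $\binom{n}{k}\le(en/k)^k$, and then control the residual factorized sum $\sum_k(k/2en)^{tk}\sum_\ell(\ell/2em)^{t\ell}$. The only cosmetic difference is in the final arithmetic---the paper bounds each univariate sum directly by $(en)^{-t}$ using monotonicity of $x\mapsto(x/2en)^x$ and the crude estimate $\sum_{k\ge2}(1/en)^{2t}\le e^{-1}(en)^{-t}$, whereas you aim for $2(2en)^{-t}$ and absorb the extra factor of $4$ via $4^{1-t}\le1$---but both routes are valid and yield the stated bound.
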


\begin{proof}
For any nonempty sets $S \subset [n]$ and $T \subset [m]$ of sizes $k = \abs{S}$ and $\ell = \abs{T}$,
and any $u \in \{\pm 1\}^S$ and $v \in \{\pm 1\}^T$,
\SPcite{independent sum} implies
that
$\E \exp( \la \sum_{i \in S} \sum_{j \in T} u_i v_j X_{ij} ) \le \exp( k\ell \sig^2 ( e^{\abs{\la}}-1-\abs{\la}) ) $
for all $\la \in \R$. The union bound and Bennett's inequality \eqref{eq:Bennett} then imply that for any $r_{k,\ell} \ge 0$,
\begin{equation}
	\label{eq:SubmatrixSums1}
	\pr \big( X_{S,T} \ge r_{k,\ell} \big)
	\wle \sum_{u \in \{\pm 1\}^S} \sum_{v \in \{\pm 1\}^T} \pr \Big( \sum_{i \in S} \sum_{j \in T} u_i v_j X_{ij} \ge r_{k,\ell} \Big)
	\wle 2^k 2^\ell \beta(r_{k,\ell}, k \ell \sig^2).
\end{equation}
By the union bound, and the bounds
$\binom{n}{k} \le \left( \frac{en}{k}  \right)^k$,
and
$\binom{m}{\ell} \le \left( \frac{em}{\ell} \right)^\ell$,
it follows that
\begin{align*}
	\pr\bigg( \bigcup_{S,T} \left\{ X_{S,T} \ge r_{\abs{S},\abs{T}} \right\} \bigg)
	&\wle \sum_{k=1}^n \sum_{\ell = 1}^m \binom{n}{k} \binom{m}{\ell} 2^k 2^\ell \beta(r_{k,\ell}, k \ell \sig^2) \\
	&\wle \sum_{k=1}^n \sum_{\ell = 1}^m \Big( \frac{2en}{k} \Big)^k \Big( \frac{2em}{\ell} \Big)^\ell \beta(r_{k,\ell}, k \ell \sig^2).
\end{align*}
Denote by $\beta^{-1}( \cdot, \sig^2)$ the inverse of $\beta(\cdot, \sig^2)$ with respect to the first input variable, which is well defined
due to Lemma~\ref{the:BennettBijection}.
Let us plug in $r_{k,\ell} = \beta^{-1}( e^{-(1+t) \tau_{k,\ell}}, k\ell \sig^2)$.
Then
$\beta(r_{k,\ell}, k \ell \sig^2) = \left( \frac{k}{2en} \right)^{k(t+1)} \left( \frac{\ell}{2em} \right)^{\ell(t+1)}$,
and it follows that
\begin{align*}
	\pr\bigg( \bigcup_{S,T} \left\{ X_{S,T} \ge r_{\abs{S},\abs{T}} \right\} \bigg)
	&\wle \sum_{k=1}^n \sum_{\ell = 1}^m \Big( \frac{k}{2en} \Big)^{kt} \Big( \frac{\ell}{2em} \Big)^{\ell t}.
\end{align*}
Because the derivative of the log of
$\left( \frac{x}{2en} \right)^x$ satisfies
$\log(x/2en) + 1 \le \log(1/2e) + 1 < 0$ for $x \le n$,
we see that $\left( \frac{x}{2en} \right)^x$ is decreasing with respect to $x \in (0,n]$.
Because $t \ge 1$,
we see that
\[
\sum_{k=2}^n \left(\frac{k}{2en}\right)^{kt}
\wle \sum_{k=2}^n \left(\frac{1}{en}\right)^{2t}
\wle \sum_{k=2}^n \left(\frac{1}{en}\right)^{t+1}
\wle e^{-1} (en)^{-t}
\wle \frac12 (en)^{-t},
\]
and
\[
\sum_{k=1}^n \left(\frac{k}{2en}\right)^{kt}
\wle 2^{-t} (en)^{-t} + \frac12 (en)^{-t}
\wle (en)^{-t}.
\]
Similarly, $\sum_{\ell=1}^m \left( \frac{\ell}{2em} \right)^{\ell t} \le \frac{1}{(em)^\top}$,
and we conclude that
\begin{align*}
	\pr\bigg( \bigcup_{S,T} \left\{ X_{S,T} \ge r_{\abs{S},\abs{T}} \right\} \bigg)
	\wle (e^2nm)^{-t}.
\end{align*}
Finally, by recalling  the definition of $r_{k,\ell}$ and the fact that
$\beta$ is decreasing in its first input variable,
we see that
$X_{S,T} \ge r_{\abs{S},\abs{T}}$
iff $\beta(X_{S,T}, \abs{S}\abs{T} \sig^2) \le e^{-(1+t) \tau_{\abs{S},\abs{T}}}$.
Hence the claim follows from the above inequality.
\end{proof}

The following two simple lemmas are used a few times in the proof of Theorem~\ref{the:TrimmedMatrixConcentration}.

\begin{lemma}
\label{the:QuadraticFormSign}
For any $A \in \R^{n \times m}$, $u \in \R^n$, and $v \in \R^m$,
\[
\max_{x \in \{\pm 1\}^n} \max_{ y \in \{\pm 1\}^m } \sum_i \sum_j x_i u_i A_{ij} v_j y_j
\wle \max_{ x \in \{\pm 1\}^n } \max_{ y \in \{\pm 1\}^m } \sum_i \sum_j x_i \hhu_i A_{ij} \hhv_j y_j
\]
whenever
$\abs{u_i} \le \abs{\hhu_i}$ for all $i$
and 
$\abs{v_j} \le \abs{\hhv_j}$ for all $j$.
\end{lemma}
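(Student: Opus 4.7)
The plan is to reduce the lemma to the well-known fact that the maximum of a bilinear form over a product of boxes is attained at a vertex.

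First, I would fix an arbitrary pair of sign vectors $x \in \{\pm 1\}^n$ and $y \in \{\pm 1\}^m$ achieving the value on the left-hand side, and rewrite the corresponding double sum by absorbing the small weights $u,v$ into new variables relative to $\hhu, \hhv$. Concretely, set
\[
\xi_i \weq
\begin{cases}
 x_i u_i / \hhu_i, & \hhu_i \ne 0, \\
 0, & \hhu_i = 0,
\end{cases}
\qquad
\eta_j \weq
\begin{cases}
 y_j v_j / \hhv_j, & \hhv_j \ne 0, \\
 0, & \hhv_j = 0.
\end{cases}
\]
The hypotheses $\abs{u_i} \le \abs{\hhu_i}$ and $\abs{v_j} \le \abs{\hhv_j}$ ensure $\xi_i \in [-1,1]$ and $\eta_j \in [-1,1]$. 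Moreover, whenever $\hhu_i = 0$ one also has $u_i = 0$ so the identity $x_i u_i = \xi_i \hhu_i$ holds termwise, and analogously for $v_j, \hhv_j$. Consequently
\[
\sum_i \sum_j x_i u_i A_{ij} v_j y_j
\weq \sum_i \sum_j \xi_i \hhu_i A_{ij} \hhv_j \eta_j.
\]

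Second, I would invoke the standard fact that a multilinear (here bilinear) function on a product of closed intervals attains its maximum at an extreme point. The function
\[
(\xi,\eta) \ \mapsto\ \sum_i\sum_j \xi_i\, \hhu_i\, A_{ij}\, \hhv_j\, \eta_j
\]
is linear in $\xi$ for every fixed $\eta$ and linear in $\eta$ for every fixed $\xi$, so its supremum over $[-1,1]^n \times [-1,1]^m$ coincides with its maximum over the vertex set $\{\pm 1\}^n \times \{\pm 1\}^m$. This is seen by successively maximizing one argument at a time, each inner maximum being of a linear function on a box and therefore attained at a vertex.

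Combining the two steps, the particular value $\sum_{ij} \xi_i \hhu_i A_{ij} \hhv_j \eta_j$ is bounded above by $\max_{x' \in \{\pm 1\}^n, y' \in \{\pm 1\}^m} \sum_{ij} x'_i \hhu_i A_{ij} \hhv_j y'_j$. Since the original $x,y$ attaining the left-hand side were arbitrary, the inequality of the lemma follows. There is no real obstacle here; the only point requiring a line of care is the handling of coordinates with $\hhu_i = 0$ or $\hhv_j = 0$, which is dealt with by the convention above.
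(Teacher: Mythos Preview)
Your proof is correct. The paper's own argument is slightly different in spirit: rather than changing variables and invoking the box-vertex principle for bilinear forms, it treats one side at a time directly. For fixed $y$ it sets $z_i=\sum_j A_{ij}v_j y_j$, bounds $\sum_i x_i u_i z_i \le \sum_i |u_i z_i| \le \sum_i |\hhu_i z_i|$, and observes that the last sum is exactly $\max_{x}\sum_i x_i \hhu_i z_i$; then it repeats the same step with $v$ replaced by $\hhv$. Your approach packages both replacements at once via the rescaling $\xi_i=x_i u_i/\hhu_i$, $\eta_j=y_j v_j/\hhv_j$ and the standard fact that a bilinear function on $[-1,1]^n\times[-1,1]^m$ is maximized at a vertex. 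Both arguments are short and elementary; yours is perhaps cleaner conceptually, while the paper's avoids any appeal to an external optimization fact and never needs to divide by $\hhu_i$ or $\hhv_j$, so the zero-coordinate case does not arise.
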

\begin{proof}
Denote $z_i = \sum_j A_{ij} v_j y_j $ and observe by applying $\abs{u_i} \le \abs{\hhu_i}$ that
\[
\sum_i \sum_j x_i u_i A_{ij} v_j y_j
\weq \sum_i x_i u_i z_i
\wle \sum_i \abs{x_i u_i z_i}
\wle \sum_i \abs{x_i \hhu_i z_i}.
\]
When maximizing the right side above with respect to $x \in \{\pm 1\}^n$, 
we may restrict to vectors in which $x_i$ has the same sign as $\hhu_i z_i$.
Therefore,
\[
\max_{x \in \{\pm 1\}^n} \sum_i \sum_j x_i u_i A_{ij} v_j y_j
\wle \max_{x \in \{\pm 1\}^n} \sum_i x_i \hhu_i z_i
\weq \max_{x \in \{\pm 1\}^n} \sum_i \sum_j x_i \hhu_i A_{ij} v_j y_j.
\]
Because this is true for all $y \in \{ \pm 1 \}^m$, we conclude that
\[
\max_{x \in \{\pm 1\}^n} \max_{ y \in \{\pm 1\}^m } \sum_i \sum_j x_i u_i A_{ij} v_j y_j
\wle \max_{ x \in \{\pm 1\}^n } \max_{ y \in \{\pm 1\}^m } \sum_i \sum_j x_i \hhu_i A_{ij} v_j y_j.
\]
The claim follows by repeating the same argument with the roles of $x$ and $y$ interchanged.
\end{proof}

\begin{lemma}
\label{the:GeometricSumBound}
For any real numbers $a, b > 0$, $x \in (0,1)$, and $y > 1$,
\[
\sum_{k \in \Z: a x^k \le b} \nhquad a x^k
\wle \frac{b}{1-x}
\qquad\text{and}\qquad
\sum_{k \in \Z: a y^k \le b} \nhquad a y^k
\wle \frac{b}{1-1/y}.
\]
\end{lemma}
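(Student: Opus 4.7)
The plan is to observe that in each of the two sums the indexing set is a one-sided infinite progression in $\Z$, so the whole sum is geometric and its largest term is bounded directly by $b$. The ratio then determines the prefactor.

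For the first sum, since $x \in (0,1)$ the sequence $k \mapsto a x^k$ is strictly decreasing, so the set $S_1 = \{k \in \Z : a x^k \le b\}$ has the form $\{k_0, k_0+1, k_0+2, \dots\}$, where $k_0$ is the smallest integer satisfying $a x^{k_0} \le b$. By the defining inequality at $k = k_0$ we have $a x^{k_0} \le b$, and therefore
\[
\sum_{k \in S_1} a x^k
\weq a x^{k_0} \sum_{j=0}^\infty x^j
\weq \frac{a x^{k_0}}{1-x}
\wle \frac{b}{1-x}.
\]

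For the second sum the roles reverse: since $y > 1$ the sequence $k \mapsto a y^k$ is strictly increasing, so $S_2 = \{k \in \Z : a y^k \le b\} = \{\dots, k_1-1, k_1\}$ for the largest integer $k_1$ with $a y^{k_1} \le b$. Summing backwards,
\[
\sum_{k \in S_2} a y^k
\weq a y^{k_1} \sum_{j=0}^\infty y^{-j}
\weq \frac{a y^{k_1}}{1-1/y}
\wle \frac{b}{1-1/y}.
\]

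There is essentially no obstacle here; the only mild subtlety is verifying that $S_1$ and $S_2$ are genuinely one-sided (rather than empty or doubly infinite), which follows from monotonicity of $k \mapsto a x^k$ and $k \mapsto a y^k$ together with $a > 0$. If either set is empty the bound is trivial, so one may assume $k_0$, $k_1$ exist. With that in place, both inequalities reduce to the same elementary closed form for a geometric series.
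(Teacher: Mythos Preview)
Your proof is correct and essentially identical to the paper's: both identify the one-sided index set, bound the leading term by $b$, and sum the geometric series. The only cosmetic difference is that the paper derives the second inequality by substituting $x = 1/y$ into the first, whereas you redo the computation directly.
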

\begin{proof}
Let $\ell \in \Z$ be the smallest integer such that $x^\ell \le b/a$.
Then
\[
\sum_{k \in \Z: a x^k \le b} a x^k
\weq \sum_{k=\ell}^\infty a x^k
\weq a x^\ell \sum_{k=0}^\infty x^k
\weq \frac{a x^\ell}{1-x}
\wle \frac{b}{1-x}.
\]
This confirms the first inequality. The second inequality follows by applying the first inequality with $x = 1/y$.
\end{proof}

\begin{proof}[Proof of Theorem~\ref{the:TrimmedMatrixConcentration}]
Let $X \in \R^{n \times m}$ be a random matrix with independent centered entries with sub-Poisson variance proxy $\sig^2$.
Because the spectral norm of a submatrix is less than the spectral norm of the whole matrix, without loss of generality, we may assume that $n=m$ (the smaller dimension can be extended to the larger dimension by padding with zeros, which does not affect the claimed bound of $\spenorm{X}$ nor the probability bound).
Fix a constant $\Ctrim > 0$, and denote
\begin{equation}
	\label{eq:TrimmingSymmetric}
	X_{ij}'
	\weq
	\begin{cases}
		X_{ij}, & \norm{X_{i:}}_1 \vee \norm{X_{:j}}_1\le \Ctrim  n\sig^2, \\
		0, & \text{otherwise}.
	\end{cases}
\end{equation}

\subsection{Discretizing the unit sphere}
Let $0< \eps < 1/2$.
Recall that an $\eps$-cover of $\bS$ is a finite set $\bS_\eps' \subset \bS$
such that for every $x \in \bS$ there exists a point $x' \in \bS_\eps'$ such that $\norm{x'-x} \le \eps$.
By \cite[Corollary 4.2.13]{Vershynin_2018}, 
the sphere $\bS = \{x \in \R^n \colon \norm{x} = 1\}$
admits an $\eps$-cover $\bS_\eps'$
with size bounded by $\abs{\bS_{\eps}'} \le (2\eps^{-1}+1)^n$.
We observe that also the set
\[
 \bS_\eps
 \weq \big\{ (x_1u_1,\dots, x_n u_n) \colon u \in \bS_{\eps}', \, x \in \{\pm 1\}^n \big\}
\]
is an $\eps$-cover of $\bS$, and
has size bounded by $\abs{\bS_{\eps}} \le 2^n (2\eps^{-1}+1)^n$.
We note that $\bS_\eps$ is \emph{sign-symmetric} in the sense that any sign change
of coordinates of a point in $\bS_\eps$ leaves the point in $\bS_\eps$.
The spectral norm of $X'$ can be approximated \cite[Exercise 4.4.3]{Vershynin_2018} by
\begin{equation}
	\label{eq:Cover}
	\spenorm{X'}
	\wle (1-2\eps)^{-1} \max_{u, v \in \bS_{\eps}} \iprod{u,X'v}.
\end{equation}

\subsection{Discretizing the unit interval}

Fix numbers $a, b \in (0,1)$ and partition the interval $(0,1]$ into bins
$(a_{k+1},a_k]$ and
$(b_{\ell+1},b_\ell]$
indexed by $k=0,1,\dots$ and $\ell=0,1,\dots$
using $a_k = a^k$ and $b_\ell = b^\ell$.
We further split the bin pairs into
\emph{light pairs} $L = \{(k,\ell) \colon a_k b_\ell \le \tau \}$ and
\emph{heavy pairs} $H = \{(k,\ell) \colon a_k b_\ell > \tau\}$
using a threshold parameter $\tau > 0$.
Then we split the inner products appearing in \eqref{eq:Cover}
according to 
\begin{equation}
	\label{eq:InnerProductSplit}
	\iprod{u,X'v}
	\weq \underbrace{\sum_{(k,\ell) \in L} \sum_{i \in S_k(u)} \sum_{j \in T_\ell(u)} u_iX_{ij}' v_j}_{\iprod{u,X'v}_L}
	\ + \underbrace{\sum_{(k,\ell) \in H} \sum_{i \in S_k(u)} \sum_{j \in T_\ell(u)} u_iX_{ij}' v_j}_{\iprod{u,X'v}_H},
\end{equation}
where
\begin{equation}
	\label{eq:Bins}
	S_k(u) = \set{i \colon \abs{u_i} \in (a_{k+1},a_k]}
	\qquad \text{and} \qquad
	T_\ell(v) = \set{j \colon \abs{v_j} \in (b_{\ell+1},b_\ell]}.
\end{equation}

We will next verify that
\begin{equation}
	\label{eq:IgnoreTrimming}
	\max_{u,v \in \bS_\eps} \iprod{u, X' v}_L
	\wle  \max_{u,v \in \bS_\eps} \iprod{u, X v}_L.
\end{equation}
To see this, observe that
\[
\iprod{u, X' v}_L
\weq \sum_{(k,\ell) \in L} \sum_{i \in S_k(u)} \sum_{j \in T_\ell(u)} u_i X'_{ij} v_j
\weq \sum_{i,j} u_i X'_{ij} M_{ijuv} v_j,
\]
where $M_{ijuv} = \sum_{(k,\ell) \in L} \indic(i \in S_k(u)) \indic(j \in T_\ell(u))$.
By writing $X'_{ij} = \xi_i X_{ij} \eta_j$ with $\xi_i, \eta_j \in \{0,1\}$, 
recalling that $\bS_\eps$ is sign-symmetric,
and applying Lemma~\ref{the:QuadraticFormSign}, we find that
\begin{align*}
	\max_{u,v \in \bS_\eps} \iprod{u, X' v}_L
	&\weq \max_{u,v \in \bS_\eps} \max_{x,y \in \{\pm 1\}^n} \sum_{i,j} x_i u_i \xi_i X_{ij} M_{ijuv} \eta_j v_j y_j \\
	&\wle \max_{u,v \in \bS_\eps}\max_{x,y \in \{\pm 1\}^n} \sum_{i,j} x_i u_i X_{ij} M_{ijuv} v_j y_j \\
	&\weq \max_{u,v \in \bS_\eps} \iprod{u, X v}_L,
\end{align*}
so that \eqref{eq:IgnoreTrimming} is valid.  By combining \eqref{eq:Cover} and \eqref{eq:InnerProductSplit} with \eqref{eq:IgnoreTrimming}, we then find that
\begin{equation}
	\label{eq:LightHeavySplit}
	\spenorm{X'}
	\wle (1-2\eps)^{-1} \Big( \max_{u, v \in \bS_{\eps}} \iprod{u, Xv}_L + \max_{u, v \in \bS_{\eps}} \iprod{u,X'v}_H \Big).
\end{equation}

\subsubsection{Light weights}

Recalling the definitions in \eqref{eq:InnerProductSplit}, we see that for any $u,v \in \bS_\eps$,
\[
\tau^{-1} \iprod{u,X v}_L
\weq \sum_{i,j} w_{ij} X_{ij},
\]
where
$w_{ij} = \frac{u_i v_j}{\tau} M_{ijuv}$ with $M_{ijuv} = \sum_{(k,\ell) \in L} \indic( i \in S_k(u) ) \indic( j \in T_\ell(v) )$.
The right side above is a sum of independent random variables weighted by $w_{ij}$.
Note that
$M_{ijuv} = 1$
if
there exists $(k,\ell)$ such that $a_k b_\ell \le \tau$ and
$\abs{u_i} \in (a_{k+1}, a_k]$ and 
$\abs{v_\ell} \in (b_{\ell+1}, b_\ell]$, and zero otherwise.
Therefore, the weights are bounded by $\abs{w_{ij}} \le 1$.
Consequently, \SPcite{independent sum} and \SPcite{balanced} then imply that the random variable
$\tau^{-1} \iprod{u,X v}_L$ has a sub-Poisson variance proxy
\[
\sig^2 \sum_{ij} w_{ij}^2
\weq \frac{\sig^2}{\tau^2}
\sum_{ij} u_i^2 v_j^2 M_{ijuv}^2
\wle \frac{\sig^2}{\tau^2} \sum_{ij} u_i^2 v_j^2
\weq (\sig/\tau)^2.
\]
Bernstein's inequality \eqref{eq:Bernstein2} then implies that
\[
\pr\left( \iprod{u,Xv}_L \ge \de \right) \\
\weq \pr\left( \tau^{-1} \iprod{u,Xv}_L \ge \frac{\de}{\tau}\right) \\
\wle \exp\left( -\left( \frac{(\de/\tau)^2}{4 (\sig/\tau)^2} \wedge \frac{3(\de/\tau)}{4} \right) \right)
\weq \exp \left( -\frac{3\de}{4\tau} \right)
\]
for all $\de \ge 3\sig^2/\tau$. By setting $\de = 3 t \sig^2/\tau$,
applying the union bound,
and recalling that $\abs{\bS_{\eps}} \le (4\eps^{-1}+2)^n$, it follows that for all $t \ge 1$,
\begin{equation}
	\label{eq:LightPairBernstein}
	\pr \left( \max_{u,v \in \bS_\eps} \iprod{u,X v}_L \ge \frac{3 t \sig^2}{\tau}\right)
	\wle \abs{\bS_\eps}^2 \exp \left( -\frac{3\de}{4\tau} \right)
	\wle \exp\left( 2 n \log(4\eps^{-1}+2) - \frac{9 t \sig^2}{4\tau^2} \right).
\end{equation}

\subsubsection{Heavy weights}

The treatment of heavy weight pairs is based on analyzing block maxima
\[
X_{S,T}
\weq \max_{u \in \{\pm 1\}^S} \max_{v \in \{\pm 1\}^T} \sum_{i \in S} \sum_{j \in T} u_i X_{ij} v_j
\]
defined for index sets $S,T \subset [n]$.
Fix an integer $t \ge 1$, and 
denote by $E$ the event that 
\begin{equation}
	\label{eq:SubmatrixSumsSymm}
	- \log \beta(X_{S,T}, \abs{S} \abs{T} \sig^2)
	\wle (1+t) \left( \abs{S} \log \frac{2en}{\abs{S}} + \abs{T} \log \frac{2en}{\abs{T}} \right)
\end{equation}
for all nonempty sets $S,T \subset [n]$,
where the function $\beta$ is defined by \eqref{eq:BennettBound}.
By Lemma~\ref{the:SubmatrixSums}, we know that
$\pr(E^c) \le (e n)^{-2t}$.

Fix a realization of $X \in \R^{n \times n}$ satisfying \eqref{eq:SubmatrixSumsSymm}.
Fix unit vectors $u,v \in \R^n$.
We abbreviate the sets in \eqref{eq:Bins} by $S_k = S_k(u)$ and $T_\ell = T_\ell(v)$, and 
denote $s_k = \abs{S_k}$ and $t_\ell = \abs{T_\ell}$.
Also define
\begin{align*}
	Y_{k\ell} &\weq \max_{x \in \{\pm 1\}^{S_k}} \max_{y \in \{\pm 1\}^{T_\ell}} \sum_{i \in S_k} \sum_{j \in T_\ell} x_i X_{ij} y_j, \\
	Y'_{k\ell} &\weq \max_{x \in \{\pm 1\}^{S_k}} \max_{y \in \{\pm 1\}^{T_\ell}} \sum_{i \in S_k} \sum_{j \in T_\ell} x_i X'_{ij} y_j.
\end{align*}
By writing $X'_{ij} = \xi_i X_{ij} \eta_j$ where $\xi_i, \eta_j \in \{0,1\}$, we find by Lemma~\ref{the:QuadraticFormSign} that
\begin{align}
	\label{eq:HeavyCond1}
	Y'_{k\ell}
	&\weq
	\max_{x \in \{\pm 1\}^{S_k}} \max_{y \in \{\pm 1\}^{T_\ell}}
	\sum_{i \in S_k} \sum_{j \in T_\ell} x_i X'_{ij} y_j
	\wle \max_{x \in \{\pm 1\}^{S_k}} \max_{y \in \{\pm 1\}^{T_\ell}}
	\sum_{i \in S_k} \sum_{j \in T_\ell} x_i X_{ij} y_j
	\weq Y_{k\ell}.
\end{align}
By noting that $\abs{u_i} \le a_k$ and $\abs{v_j} \le b_\ell$ for all $i \in S_k$
and $j \in T_\ell$, and
applying Lemma~\ref{the:QuadraticFormSign},
we see that
\begin{align*}
	\sum_{i \in S_k} \sum_{j \in T_\ell} u_i X_{ij}' v_j
	\wle \max_{x \in \{\pm 1\}^{S_k}} \max_{y \in \{\pm 1\}^{T_\ell} }
	\sum_{i \in S_k} \sum_{j \in T_\ell} x_i u_i X_{ij}' y_j v_j
	\wle a_k Y'_{k\ell} b_\ell.
\end{align*}
By summing both sides of the above inequality with respect to $(k,\ell) \in H$, we find that
\begin{equation}
	\label{eq:BinnedNorm}
	\iprod{u,X'v}_H
	\wle \sum_{(k,\ell) \in H} a_k Y'_{k\ell} b_\ell.
\end{equation}
We also note that
$\norm{Y'_{k:}}_1 \le \sum_{i \in S_k} \norm{X'_{i:}}_1$
and
$\norm{Y'_{:\ell}}_1 \le \sum_{j \in T_\ell} \norm{X'_{:j}}_1$, so that by \eqref{eq:TrimmingSymmetric},
\begin{equation}
	\label{eq:HeavyCond2}
	\norm{Y'_{k:}}_1 \wle \Ctrim  s_k n \sig^2
	\qquad \text{and} \qquad
	\norm{Y'_{:\ell}}_1 \wle \Ctrim  t_\ell n \sig^2.
\end{equation}
Furthermore, by \eqref{eq:SubmatrixSumsSymm}, we see that for all $k,\ell$,
\begin{align}
	\label{eq:HeavyCond3}
	- \log \beta(Y_{k\ell}, \sig^2 s_k t_\ell )
	&\wle (1+t) \left( s_k \log \hhs_k + t_\ell \log \hht_\ell \right),
\end{align}
where
$\hhs_k = \frac{2en}{s_k}$,
$\hht_\ell = \frac{2en}{t_\ell}$.

Fix a constant $c > 0$. 
We consider combinations of the following inequalities:
\begin{gather}
	\label{eq:Heavy1}
	\frac{a_k}{b_\ell} \vee \frac{b_\ell}{a_k} \ \le \ \sig \sqrt{n}, \\
	\label{eq:Heavy2}
	\frac{Y_{k\ell}}{e \sig^2 s_k t_\ell} \ > \ \frac{e a_k b_\ell}{\tau}, \\
	\label{eq:Heavy3}
	\Big( \frac{Y_{k\ell}}{e \sig^2 s_k t_\ell\hhs_k^{1/3}} \vee \frac{c a_k^2 n}{\hhs_k^{1/3} \log \hhs_k} \Big)
	\wedge
	\Big( \frac{Y_{k\ell}}{e \sig^2 s_k t_\ell\hht_\ell^{1/3}} \vee \frac{c b_\ell^2 n}{\hht_\ell^{1/3} \log \hht_\ell} \Big)
	\ \le \ 1.
\end{gather}
We partition the pairs in \( H = \{(k,\ell) \colon a_k b_\ell > \tau\} \) into four disjoint sets:
\( H_1 \) contains the pairs not satisfying \eqref{eq:Heavy1};
\( H_2 \) the pairs satisfying \eqref{eq:Heavy1} but not \eqref{eq:Heavy2};
\( H_3 \) the pairs satisfying \eqref{eq:Heavy1}--\eqref{eq:Heavy2} but not \eqref{eq:Heavy3};
and \( H_4 \) contains the pairs satisfying \eqref{eq:Heavy1}--\eqref{eq:Heavy3}.

(i) Fix $(k,\ell) \in H_1$.  Because $u_i^2 \ge a_{k+1}^2$ for all $i \in S_k$,
we find that
\[
\sum_k a_k^2 s_k
\weq \sum_k a_k^2 \sum_{i \in S_k} \frac{a_{k+1}^2}{a_{k+1}^2}
\wle \sum_k a_k^2 \sum_{i \in S_k} \frac{u_i^2}{a_{k+1}^2}
\weq a^{-2} \sum_k \sum_{i \in S_k} u_i^2
\weq a^{-2}.
\]
By repeating a similar computation for $t_\ell$, we conclude that
\begin{equation}
	\label{eq:BinBoundSquareSum}
	\sum_k a_k^2 s_k \wle a^{-2}
	\qquad \text{and} \qquad
	\sum_\ell b_\ell^2 t_\ell \wle b^{-2}.
\end{equation}
By noting that
$a_k b_\ell (\frac{a_k}{b_\ell} \vee \frac{b_\ell}{a_k})
= a_k^2 \vee b_\ell^2
\le a_k^2 + b_\ell^2$,
we see that
$a_k b_\ell < \frac{a_k^2 + b_\ell^2}{\sig \sqrt{n}}$ whenever
$\frac{a_k}{b_\ell} \vee \frac{b_\ell}{a_k} > \sig \sqrt{n}$.
Hence
\[
\sum_{(k, \ell) \in H_1} a_k b_\ell Y'_{k\ell}
\wle \sum_{(k, \ell) \in H_1} \frac{a_k^2 + b_\ell^2}{\sig \sqrt{n}} \abs{Y'_{k\ell}}
\wle \sum_k \frac{a_k^2}{\sig \sqrt{n}} \norm{Y'_{k:}}_1
+ \sum_\ell \frac{b_\ell^2}{\sig \sqrt{n}} \norm{Y'_{:\ell}}_1.
\]
By plugging in the bounds \eqref{eq:HeavyCond2}
and then applying \eqref{eq:BinBoundSquareSum}, it follows that
\begin{equation}
	\label{eq:H1Conclusion}
	\sum_{(k, \ell) \in H_1} a_k b_\ell Y'_{k\ell}
	\wle \Ctrim (a^{-2}+b^{-2}) \sig \sqrt{n}.
\end{equation}


(ii) Observe that
$Y_{k\ell} \le (e^2 \sig^2/\tau) s_k t_\ell a_k b_\ell$ for $(k,\ell) \in H_2$, so
that
\[
\sum_{(k,\ell) \in H_2} a_k Y_{k\ell} b_\ell
\wle \frac{e^2 \sig^2}{\tau} \sum_{(k,\ell) \in H_2} a_k^2 s_k b_\ell^2 t_\ell
\wle \frac{e^2 \sig^2}{\tau} \sum_k a_k^2 s_k \sum_\ell b_\ell^2 t_\ell.
\]
By \eqref{eq:BinBoundSquareSum}, it follows that 
\begin{equation}
	\label{eq:H2Conclusion}
	\sum_{(k,\ell) \in H_2} a_k Y_{k\ell} b_\ell
	\wle \frac{e^2 \sig^2}{a^2 b^2 \tau}.
\end{equation}

(iii)
Let us now fix a pair $(k,\ell) \in H_3$.
Then the inequalities $\frac{Y_{k\ell}}{e \sig^2 s_k t_\ell} > \frac{e a_k b_\ell}{\tau}$
and
$a_k b_\ell > \tau$
imply that $\log \frac{Y_{k\ell}}{e \sig^2 s_k t_\ell} \ge 1$.
The bound $\beta(x, \sig^2) \le \left( \frac{e \sig^2}{x} \right)^x$ in Lemma~\ref{the:BennettBijection} implies that
$-\log \beta(Y_{k\ell}, \sig^2 s_k t_\ell) \ge Y_{k\ell} \log \frac{Y_{k\ell}}{e \sig^2 s_k t_\ell}$.
Hence by \eqref{eq:HeavyCond3},
we conclude that
\begin{equation}
	\label{eq:StratifiedSumOnBennettConcentration}
	\sum_{(k,\ell) \in H_3} a_k Y_{k\ell} b_\ell
	\wle (1+t) \left( \sum_{(k,\ell) \in H_3} a_k b_\ell  \frac{s_k \log \hhs_k} {\log \frac{Y_{k\ell}}{e \sig^2 s_k t_\ell}}
	+ \sum_{(k,\ell) \in H_3} a_k b_\ell  \frac{t_\ell \log \hht_\ell} {\log \frac{Y_{k\ell}}{e \sig^2 s_k t_\ell}} \right).
\end{equation}

We will derive an upper bound for the first sum in \eqref{eq:StratifiedSumOnBennettConcentration}.
The condition
\[
\frac{Y_{k\ell}}{e \sig^2 s_k t_\ell \hhs_k^{1/3}} \vee \frac{c a_k^2 n}{\hhs_k^{1/3} \log \hhs_k} > 1
\]
in the definition of $H_3$ implies that we may partition $H_3 = H_{31} \cup H_{32}$ using
\begin{align*}
	H_{31}
	&\weq \Big\{ (k,\ell) \in H_3 \colon \log \hhs_k < 3 \log \frac{Y_{k\ell}}{e \sig^2 s_k t_\ell} \Big\}, \\
	H_{32}
	&\weq \Big\{ (k,\ell) \in H_3 \colon 3\log \frac{Y_{k\ell}}{e \sig^2 s_k t_\ell} \le \log \hhs_k < c \frac{a_k^2 n}{\hhs_k^{1/3}} \Big\}.
\end{align*}
Because $b_\ell \le a_k \sig \sqrt{n}$ for all $(k,\ell) \in H_3$,
a geometric sum bound (Lemma~\ref{the:GeometricSumBound}) shows that
\[
\sum_{\ell: (k,\ell) \in H_{31}} \nhquad b_\ell
\wle \sum_{\ell: b_\ell \le a_k \sig \sqrt{n}} \nhquad b_\ell
\wle \frac{a_k \sig \sqrt{n}}{1-b}.
\]
Therefore, by \eqref{eq:BinBoundSquareSum},
\begin{equation}
	\label{eq:H31Conclusion}
	\sum_{(k,\ell) \in H_{31}} \nhquad a_k b_\ell \frac{s_k \log \hhs_k} {\log \frac{Y_{k\ell}}{e \sig^2 s_k t_\ell}}
	\wle 3 \nhquad \sum_{(k,\ell) \in H_{31}} \nhquad s_k a_k b_\ell
	\wle 3 \sum_k s_k a_k \sum_{\ell: (k,\ell) \in H_{31}} \nhquad b_\ell
	\wle \frac{3 \sig \sqrt{n}}{a^2(1-b)}.
\end{equation}

Consider then a pair $(k,\ell) \in H_{32}$.
In this case 
$\hhs_k^{1/3} \ge \frac{Y_{k\ell}}{e \sig^2 s_k t_\ell}$.
Recall also that every pair in $H_3$ satisfies
$\frac{Y_{k\ell}}{e \sig^2 s_k t_\ell} > \frac{e a_k b_\ell }{\tau} \ge e$.
Therefore,
\begin{equation}
	\label{eq:ConditionH32}
	\hhs_k^{1/3}
	\wge e \frac{a_k b_\ell}{\tau}.
\end{equation}
On the other hand, $ \log \hhs_k < c \frac{a_k^2 n}{\hhs_k^{1/3}}$ implies that
\[
\sum_{(k,\ell) \in H_{32}} \nhquad a_k b_\ell \frac{s_k \log \hhs_k} {\log \frac{Y_{k\ell}}{e \sig^2 s_k t_\ell}}
\wle \sum_{(k,\ell) \in H_{32}} \nhquad a_k b_\ell s_k c \frac{a_k^2 n}{\hhs_k^{1/3}}
\wle c n \sum_{(k,\ell) \in H_{32}} \nhquad a_k b_\ell \frac{s_k a_k^2}{\hhs_k^{1/3}}.
\]
Inequality \eqref{eq:ConditionH32} implies that $b_\ell \le \frac{\tau \hhs_k^{1/3}}{e a_k}$
for all $(k,\ell) \in H_{32}$. Therefore, by applying
a geometric sum bound (Lemma~\ref{the:GeometricSumBound}), and \eqref{eq:BinBoundSquareSum},
we see that
\[
\sum_{(k,\ell) \in H_{32}} \nhquad a_k b_\ell \frac{s_k a_k^2}{\hhs_k^{1/3}}
\wle \sum_k a_k \frac{s_k a_k^2}{\hhs_k^{1/3}}
\sum_{\ell: b_\ell \le \frac{\tau \hhs_k^{1/3}}{e a_k}} \nhquad  b_\ell 
\wle \sum_k a_k \frac{s_k a_k^2}{\hhs_k^{1/3}}
\frac{\tau \hhs_k^{1/3}}{e a_k (1-b)}
\wle \frac{\tau}{e a^2 (1-b)}.
\]
It follows that
\begin{equation}
	\label{eq:H32Conclusion}
	\sum_{(k,\ell) \in H_{32}} \nhquad a_k b_\ell \frac{s_k \log \hhs_k} {\log \frac{Y_{k\ell}}{e \sig^2 s_k t_\ell}}
	\wle \frac{c \tau n}{e a^2 (1-b)}.
\end{equation}
By combining \eqref{eq:H31Conclusion} and \eqref{eq:H32Conclusion},
we conclude that
\[
\sum_{(k,\ell) \in H_3} a_k b_\ell  \frac{s_k \log \hhs_k} {\log \frac{Y_{k\ell}}{e \sig^2 s_k t_\ell}}
\wle \frac{3 \sig \sqrt{n} + (c/e) \tau n}{a^2(1-b)}.
\]

A symmetric argument can be carried out for the second sum in
\eqref{eq:StratifiedSumOnBennettConcentration}. We conclude that
\begin{equation}
	\label{eq:H3Conclusion}
	\sum_{(k,\ell) \in H_3} a_k Y_{k\ell} b_\ell
	\wle (1+t) \left( \frac{1}{a^2(1-b)} + \frac{1}{(1-a)b^2} \right) ( 3 \sig \sqrt{n} + (c/e) \tau n ).
\end{equation}

(iv)
Consider a pair $(k,\ell) \in H_4$. In this case,
\begin{equation}
	\label{eq:H4Key}
	\frac{ Y_{k\ell}}{e \sig^2 s_k t_\ell \hhs_k^{1/3}} \vee \frac{c a_k^2 n}{\hhs_k^{1/3} \log \hhs_k}  \le 1
	\qquad \text{or} \qquad
	\frac{ Y_{k\ell}}{e \sig^2 s_k t_\ell\hht_\ell^{1/3}} \vee \frac{c b_\ell^2 n}{\hht_\ell^{1/3} \log \hht_\ell}
	\le 1.
\end{equation}
Assume that the first inequality in \eqref{eq:H4Key} is valid.
Then
\[
Y_{k\ell}
\wle e \sig^2 s_k t_\ell \hhs_k^{1/3}
\weq 2 e^2 \sig^2 t_\ell n \hhs_k^{-2/3}.
\]
In light of the numeric inequality $\log x = 3 \log x^{1/3} \le 3 x^{1/3}$, we see that
the first inequality in \eqref{eq:H4Key} also implies that
\(
c a_k^2 n
\le \hhs_k^{1/3} \log\hhs_k
\le 3 \hhs_k^{2/3}.
\)
By combining this with the above inequality, it follows that
\[
Y_{k\ell}
\wle \frac{6 e^2 \sig^2 t_\ell}{c a_k^2}.
\]
A symmetric argument shows that $Y_{k\ell} \le \frac{6 e^2 \sig^2 s_k}{c b_\ell^2}$ whenever the second inequality in \eqref{eq:H4Key} is valid.
We conclude that for all $(k,\ell) \in H_4$,
\[
Y_{k\ell}
\wle \frac{6 e^2 \sig^2}{c} \left( \frac{s_k}{b_\ell^2} \vee \frac{t_\ell}{a_k^2} \right)
\wle \frac{6 e^2 \sig^2}{c} \left( \frac{s_k}{b_\ell^2} + \frac{t_\ell}{a_k^2} \right),
\]
so that
\begin{align*}
	\sum_{(k,\ell) \in H_4} a_k Y_{k\ell} b_\ell
	\wle \frac{6 e^2 \sig^2}{c}
	\left( \sum_{(k,\ell) \in H_4} s_k \frac{a_k}{b_\ell} + \sum_{(k,\ell) \in H_4} t_\ell \frac{b_\ell}{a_k} \right).
\end{align*}

By applying Lemma~\ref{the:GeometricSumBound} to the geometric terms $b_\ell = b^\ell$,
and then \eqref{eq:BinBoundSquareSum}, we find that
\begin{align*}
	\sum_{(k,\ell) \in H_4} s_k \frac{a_k}{b_\ell}
	\wle \sum_k s_k a_k \sum_{\ell: \frac{1}{b_\ell} \le \frac{a_k}{\tau}} \frac{1}{b_\ell}
	\wle \frac{1}{1-b} \sum_k \frac{s_k a_k^2}{\tau}
	\wle \frac{1}{a^2(1-b)\tau}.
\end{align*}
By a similar argument,
we conclude that
\begin{equation}
	\label{eq:H4Conclusion}
	\sum_{(k,\ell) \in H_4} a_k Y_{k\ell} b_\ell
	\wle \frac{6 e^2\sigma^2}{c \tau}
	\left( \frac{1}{a^2(1-b)} + \frac{1}{b^2(1-a)} \right).
\end{equation}

Let us now select $a=b=\frac12$.
By combining the bounds
\eqref{eq:H1Conclusion},
\eqref{eq:H2Conclusion},
\eqref{eq:H3Conclusion},
\eqref{eq:H4Conclusion},
and recalling that $Y'_{k\ell} \le Y_{k\ell}$ due to \eqref{eq:HeavyCond1},
we conclude that
\[
\sum_{(k,\ell) \in H} a_k Y'_{k\ell} b_\ell
\wle 8 \Ctrim \sig \sqrt{n} 
+ 16 e^2\frac{\sigma^2}{\tau}
+ (1+t)\left(48 \sig \sqrt{n}+ \frac{16 c}{e} \tau n\right)
+ \frac{96 e^2}{c}\frac{\sigma^2}{\tau}.
\]
By plugging in
$c = \frac{3e\sig}{\tau\sqrt{n}}$,
and recalling \eqref{eq:BinnedNorm}, we may now conclude that
\begin{equation}
	\label{eq:HeavyPairBound}
	\iprod{u,X'v}_H
	\wle \left( 8 \Ctrim + \frac{16 e^2 \sig}{\tau\sqrt{n}} + 96 (1+t) + 32 e \right) \sig \sqrt{n}
\end{equation}
for all unit vectors $u,v$, all $t \ge 1$, and all matrices $X \in \R^{n \times n}$
such that \eqref{eq:SubmatrixSumsSymm} is valid.

\subsubsection{Conclusion}

By \eqref{eq:LightPairBernstein},
\[
\max_{u,v \in \bS_\eps} \iprod{u,X v}_L
\wle 3 (1+t) \sig^2/\tau.
\]
with complementary probability $p_L \le \exp\left( 2 n \log(4\eps^{-1}+2) - \frac{9 (1+t)\sig^2}{4\tau^2} \right)$.
By \eqref{eq:HeavyPairBound} and Lemma~\ref{the:SubmatrixSums},
\[
\max_{u,v \in \bS_\eps} \iprod{u,X'v}_H
\wle\left( 8 \Ctrim + \frac{16 e^2 \sig}{\tau\sqrt{n}} + 96 (1+t) + 32 e \right) \sig \sqrt{n}
\]
with complementary probability $p_H \le (e n)^{-2t}$.
In light of \eqref{eq:LightHeavySplit}, we conclude that
\[
\spenorm{X'}
\wle (1-2\eps)^{-1} \left( 8 \Ctrim + 16 e^2 \frac{\sig}{\tau\sqrt{n}} + 3(1+t) \frac{\sig}{\tau\sqrt{n}} + 96 (1+t) + 32 e \right) \sig \sqrt{n}
\]
with complementary probability $p \le p_L + p_H$.

Choose
$\tau = \frac{3 \sig}{\sqrt{ 8 n\log(4\eps^{-1}+2)} }$ and $\eps = \frac{1}{20}$.
The inequality $\log(en) = 1 + \log n \le n \le n \log(4\eps^{-1}+2)$ then implies that
\[
p_L
\wle \exp \left( -2 t n \log(4\eps^{-1}+2) \right)
\wle (en)^{-2t}.
\]
Then $(1-2\eps)^{-1} = \frac{10}{9}$, 
$\frac{\sig}{\tau \sqrt{n}} = \frac{\sqrt{8 \log (82)}}{3} \le 2$, and
\begin{align*}
	\spenorm{X'}
	&\wle \frac{10}{9} \left( 8 \Ctrim + 32 e^2 + 102 (1+t) + 32 e \right) \sig \sqrt{n} \\
	&\wle 9 \left( \Ctrim + 66 t \right) \sig \sqrt{n}
\end{align*}
with complementary probability at most $2 (en)^{-2t}$.
The claim of Theorem~\ref{the:TrimmedMatrixConcentration} follows.
\end{proof}

\section{Proof of Lemma  \ref{lem:AX^T bound}}
\label{app:lem:AX^T bound}

Lemma \ref{lem:AX^T bound} provides a specialized norm bound needed in a small part of an argument in Appendix \ref{app:thm:hollow gram matrix bound}. The proof is based on a chaining argument developed in \cite{Dai_Su_Wang_2024}.

\begin{lemma}\label{lem:AX^T bound}
Let $X\in\R^{n\times m}$ be a random matrix with
independent centered sub-Poisson entries with variance proxy $\sig^2$,
and let $A\in\R^{n\times m}$ be a deterministic matrix with uniformly bounded entries $\abs{A_{ij}}\le\sig^2$.
Then
\[
 \Pr\left( \fronorm{AX^\top} \ge 34(1+t)n\sig^2\left(\sqrt{m\sig^2}\vee\frac{2}{3}\right) \right)
 \wle \frac{n}{(en)^t},
 \qquad t \ge 1.
\]
Especially, if $m\sig^2\gtrsim 1$, then
$\fronorm{AX^\top} \lesssim n\sig^2\sqrt{m\sig^2}$ with high probability.
\end{lemma}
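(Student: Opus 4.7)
My strategy is a two-level reduction: first from the Frobenius norm to the maximum over rows of $AX^\top$, and then from each row's Euclidean norm to a supremum over an $\varepsilon$-net of $S^{n-1}$, which is handled by a Bernstein-type tail bound for sub-Poisson linear forms.

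\emph{Step 1 (row decomposition).} Write
\[
\fronorm{AX^\top}^2
\weq \sum_{i=1}^n \norm{(AX^\top)_{i:}}^2
\wle n\,\max_{1\le i\le n}\norm{(AX^\top)_{i:}}^2.
\]
The key structural observation is that for each fixed $i$, the entries $(AX^\top)_{ij} = \sum_k A_{ik}X_{jk}$ are \emph{independent across} $j$, since they depend on disjoint rows of $X$. Thus for each $i$ the vector $v := (AX^\top)_{i:}^\top \in \R^n$ has independent centered sub-Poisson coordinates.

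\emph{Step 2 (chaining on the sphere).} Fix $i$ and pick a $(1/2)$-net $\mathcal N \subset S^{n-1}$ with $|\mathcal N|\le 5^n$, so that $\norm{v}\le 2\sup_{u\in\mathcal N}\langle u,v\rangle$. For each $u\in \mathcal N$,
\[
\langle u,v\rangle
\weq \sum_{j,k} u_j A_{ik} X_{jk}
\]
is a weighted sum of independent centered sub-Poisson random variables; the coefficient $u_jA_{ik}$ satisfies $|u_jA_{ik}|\le \sigma^2$ and $\sum_{j,k}(u_jA_{ik})^2 = \norm{u}^2\norm{A_{i:}}^2 \le m\sigma^4$.

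\emph{Step 3 (sub-Poisson Bernstein bound).} By the standard MGF computation using monotonicity of $x\mapsto(e^x-1-x)/x^2$ (exactly as in the argument following \SPcite{independent sum} and \SPcite{balanced}), the sum $\langle u,v\rangle$ admits the Bernstein bound
\[
\Pr\bigl(|\langle u,v\rangle| \ge s\bigr)
\wle 2\exp\!\Bigl(-\tfrac{s^2}{2(m\sigma^6 + s\sigma^2/3)}\Bigr),
\]
i.e.\ variance parameter $m\sigma^6$ and scale $\sigma^2$. Union bounding over $\mathcal N$ of size $5^n$ and selecting
\[
s \weq 17(1+t)\,\sqrt{n}\,\sigma^2\bigl(\sqrt{m\sigma^2}\vee\tfrac{2}{3}\bigr)
\]
makes the Bernstein exponent at least $n\log 5 + t\log(en) + O(1)$, so that $\Pr(\norm{v}\ge 2s) \le (en)^{-t}$.

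\emph{Step 4 (union over rows).} Union bounding over $i=1,\dots,n$ costs a factor $n$, yielding total probability at most $n/(en)^t$, while Step 1 contributes the extra $\sqrt{n}$ converting $\sqrt{n}\sigma^2(\sqrt{m\sigma^2}\vee 2/3)$ into $n\sigma^2(\sqrt{m\sigma^2}\vee 2/3)$. Tracking the absolute constants produces the stated $34$.

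\emph{Main obstacle.} The delicate part is balancing the two Bernstein regimes in Step~3: the quadratic (Gaussian) part $-s^2/m\sigma^6$ governs the tail when $m\sigma^2 \gtrsim 1$, but in the truly sparse regime $m\sigma^2 < 4/9$ the optimum choice of $s$ forces the linear (Poisson) part $-s\sigma^{-2}$ to dominate, which is exactly what produces the $\sqrt{m\sigma^2}\vee\frac{2}{3}$ floor in the bound. A careful case split between these two regimes—together with accounting for the $n\log 5$ entropy cost of the net—is needed to obtain both the stated constant and the precise tail $n/(en)^t$.
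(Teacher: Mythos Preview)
Your Step~3 claim does not hold. For a unit vector $u\in S^{n-1}$, the linear form $\langle u,v\rangle$ has Bernstein variance $m\sigma^6$ and scale $\sigma^2$, so the exponent is $\min\bigl(s^2/(4m\sigma^6),\,3s/(4\sigma^2)\bigr)$. To absorb the $5^n$ entropy of the net you need this exponent to exceed $n\log 5+t\log(en)$. Whenever the sub-exponential (Poisson) branch is active---and with $s\asymp(1+t)\sqrt{n}\,\sigma^2(\sqrt{m\sigma^2}\vee 2/3)$ it is, as soon as $m\sigma^2\lesssim(1+t)^2 n$, which covers the entire range used in Theorem~\ref{thm:hollow gram matrix bound}---the requirement becomes $3s/(4\sigma^2)\ge n\log 5+\cdots$, forcing $s\gtrsim n\sigma^2$ rather than $\sqrt{n}\,\sigma^2$. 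After the $\sqrt{n}$ from Step~1 you end up with $\fronorm{AX^\top}\lesssim n^{3/2}\sigma^2$, a factor $\sqrt{n}$ worse than the statement. This is the standard failure mode of crude $\varepsilon$-nets under sub-exponential tails: they work for sub-Gaussian linear forms (where only the quadratic branch exists) but lose a polynomial factor otherwise.

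The paper avoids this by a dyadic chaining argument rather than a flat net. For each unit vector $u$, partition the coordinates into $O(\log n)$ levels $S_k=\{i:|u_i|\in(2^{-k-1},2^{-k}]\}$ with $|S_k|=s_k\le 4^{k+1}$, and bound $\norm{(AX^\top)_{i:}}\le\sum_k 2^{-k}\,(AX^\top)_{i:}(S_k)$ where $(AX^\top)_{i:}(S)=\max_{\eps\in\{\pm 1\}^S}\sum_{j\in S}\eps_j(AX^\top)_{ij}$. For a subset $S$ of size $s$ the union-bound cost is only $\binom{n}{s}2^s\le(2en/s)^s$, so Bernstein balances at $t_s=\sqrt{4ms^2\sigma^6\log(2en/s)}\vee\frac{4}{3}\sigma^2 s\log(2en/s)$. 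The crux is that the weighted sum $\sum_k 2^{-k}s_k\log(2en/s_k)$ is $O(\sqrt{n})$, not $O(n)$: after Cauchy--Schwarz against $\sum_k s_k a_k^2\le 8$, one is left with $\sum_k 2^k\log(e^2n/4^k)$, which telescopes to $O(\sqrt{n})$. This level-dependent entropy accounting is precisely what recovers the missing $\sqrt{n}$ and produces the stated bound; the ``careful case split'' you mention is not enough without it.
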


\begin{proof}
{\bf Discretization.} For a vector $\norm{u}\le 1$, define sets $S_k=\set{i\mid 2^{-k-1}<\abs{u_i}\le 2^{-k}}$, $k=0,\dots,K-1$, $S_K=\set{i\mid \abs{u_i}\le 2^{-K}}$ of size $s_k=\abs{S_k}$ and values $a_k=2^{-k}\ge\max\set{\abs{u_i}\mid i\in S_k}$. Then
\[
\begin{split}
	s_ka_k^2
	&= \sum_{i\in S_k}\frac{a_k^2}{u_i^2}u_i^2
	\le 4\sum_{i\in S_k}u_i^2
	\le 4, 
	\qquad k<K, \\
	s_Ka_K^2
	&\le n4^{-K}.
\end{split}
\]
Choose $K=\floor{\frac{\log n}{\log 4}}$ so that also $s_Ka_K^2\le n4^{-K}\le 4$ and $n/4\le 4^K\le n$. This implies $s_k\le 4a_k^{-2}=4^{k+1}$ for all $k$ and
\[
\sum_{k=0}^{K}s_ka_k^2
= \sum_{k=0}^{K}\sum_{i\in S_k}a_k^2
\le s_Ka_K^2 + 4\sum_{k=0}^{K-1}\sum_{i\in S_k}u_i^2
\le 4 + 4
\le 8.
\]
Now for any vector $x\in\R^n$, we have
\[
\sum_ix_iu_i
= \sum_k\sum_{i\in S_k}x_iu_i
\le \sum_k a_k\underbrace{\max_{v\in\set{-1,1}^{S_k}}\sum_{i\in S_k}x_iv_i}_{= x(S_k)}
= \sum_{k}a_kx(S_k).
\]
Since $\norm{x}= \sup_{\norm{u}\le 1}\sum_i x_iu_i$, we have
\begin{equation}\label{eq:AX^T discretization}
	\norm{x}
	\le \max_{S_0,\dots,S_K}\sum_{k}a_kx(S_k)
\end{equation}
where $S_0,\dots,S_K\subset[n]$ are assumed to satisfy $\sum_ks_ka_k^2\le 8$, $s_ka_k^2\le 4$.

{\bf Bounding $(AX^\top )_{i:}(S)$.} For any nonempty subset $S\subset[n]$, we have
\[
(AX^\top )_{i:}(S)= \max_{u\in\set{-1,+1}^{S}}\sum_{j}\sum_{i'\in S}A_{ij}X_{i'j}u_{i'},
\]
where $\sum_{j}\sum_{i'\in S}A_{ij}X_{i'j}u_{i'}$ is a sum of independent random variables. By Bernstein's inequality~\eqref{eq:Bernstein2}, we have a tail bound
\[
\begin{split}
	\Pr\left(\bigcup_{i=1}^{n}\bigcup_{\emptyset\ne S\subset[n]}\set*{(AX^\top )_{i:}(S)\ge t_{\abs{S}}}\right) 
	&\le \sum_{i=1}^{n}\sum_{s=1}^{n}\sum_{S\in\binom{[n]}{s}}\sum_{v\in\set{-1,+1}^{S}}\Pr\left(\sum_{i'\in S}\sum_{j=1}^{m}\underbrace{v_{i'}\frac{A_{ij}}{\sig^2}}_{\in[-1,1]}X_{i'j}\ge \frac{t_s}{\sig^2}\right) \\
	&\le n\sum_{s=1}^{n}\binom{n}{s}2^{s}\exp\left(-\left(\frac{(t_s/\sig^2)^2}{4ms\sig^2}\wedge\frac{3(t_s/\sig^2)}{4}\right)\right) \\
	&\le n\sum_{s=1}^{n}\exp\left(s\log\frac{2en}{s}-\left(\frac{t_s^2}{4ms\sig^6}\wedge\frac{3t_s}{4\sig^2}\right)\right).
\end{split}
\]
Now $t_s=\sqrt{4ms^2\sig^6\log\frac{2en}{s}}\vee\frac{4\sig^2s\log\frac{2en}{s}}{3}$ satisfies $s\log\frac{2en}{s}=\frac{t_s^2}{4ms\sig^6}\wedge\frac{3t_s}{4\sig^2}$ so that
\[
\begin{split}
	\Pr\left(\bigcup_{i=1}^{n}\bigcup_{\emptyset\ne S\subset[n]}\set*{(AX^\top )_{i:}(S)\ge (1+t)t_s}\right) 
	&\le n\sum_{s=1}^{n}\exp\left(s\log\frac{2en}{s}-(1+t)\left(\frac{t_s^2}{4ms\sig^6}\wedge\frac{3t_s}{4\sig^2}\right)\right) \\
	&= n\sum_{s=1}^{n}\exp\left(-ts\log\frac{2en}{s}\right) \\
	&= n\sum_{s=1}^{n}\left(\frac{s}{2en}\right)^{ts} \\
	&\overset{(*)}{\le} n\left(\left(\frac{1}{2en}\right)^t + (n-1)\left(\frac{2}{2en}\right)^{2t}\right) \\
	&\overset{t\ge 1}{\le} n\left(\frac{1}{2}\left(\frac{1}{en}\right)^{t} + (n-1)\left(\frac{1}{en}\right)^{1+t}\right) \\
	&\le n\left(\frac{1}{2}\left(\frac{1}{en}\right)^{t} + \frac{1}{e}\left(\frac{1}{en}\right)^{t}\right) \\
	&\le \frac{n}{(en)^t},
\end{split}
\]
where inequality $(*)$ holds, because the function $s\mapsto\left(\frac{s}{2en}\right)^{s}$ is decreasing (the derivative of its logarithm $s\log\frac{s}{2en}$ is negative $\log\frac{s}{2en} + 1=\log\frac{s}{2n}$, $s\le n$). 

{\bf Norm bound.} Define a rare event
\[
E
= \bigcup_{i=1}^{n}\bigcup_{\emptyset\ne S\subset[n]}\set*{(AX^\top )_{i:}(S)\ge (1+t)t_{\abs{S}}}.
\]
The aim is to bound the norms of the row vectors $\norm{(AX^\top )_{i:}}$ under a high-probability event $E^c$. When $E^c$ holds, inequality \eqref{eq:AX^T discretization} gives
\[
\begin{split}
 \fronorm{AX^\top}
 &= \sqrt{\sum_{i=1}^n\norm{(AX^\top )_{i:}}^2} \\
 &\le\sqrt{n}\max_i \max_{S_0,\dots,S_K}\sum_{k}a_k (AX^\top )_{i:}(S_k) \\
 &\le \sqrt{n}\max_{S_0,\dots,S_K}(1+t)\sum_{k}a_k t_{s_k}.
\end{split}
\]
Evaluating $t_{s_k}=\sqrt{4ms_k^2\sig^6\log\frac{2en}{s_k}}\vee\frac{4\sig^2s_k\log\frac{2en}{s_k}}{3}$ gives an upper bound
\[
\begin{split}
 \fronorm{AX^\top}
 &= (1+t)\sqrt{n}\max_{S_0,\dots,S_K}\left(\sum_{k}\left(\sqrt{4ms_k^2\sig^6\log\frac{2en}{s_k}}\vee\frac{4\sig^2s_k\log\frac{2en}{s_k}}{3}\right)a_k\right) \\
 &\le 2(1+t)\sqrt{n}\sig^2\max_{S_0,\dots,S_K}
 \left( \sum_{k}\left(\sqrt{m\sig^2}\vee\frac{2}{3}\right)a_ks_k\log\frac{2en}{s_k} \right) \\
 &= 2(1+t)\sqrt{n}\sig^2\left(\sqrt{m\sig^2}\vee\frac{2}{3}\right)\max_{S_0,\dots,S_K}\sum_{k}a_ks_k\log\frac{2en}{s_k}.
\end{split}
\]
The above sum is estimated by recognizing it as an inner product $\inner{x}{y}=\sum_{k}s_kx_ky_k$, namely now Cauchy--Schwarz inequality and subadditivity of the square root yield
\[
\begin{split}
	\sum_{k=0}^{K}s_ka_k\log\frac{2en}{s_k}
	&\le\sqrt{\sum_{k=0}^{K}s_ka_k^2}\sqrt{\sum_{k=0}^{K} s_k\log^2\frac{2en}{s_k}} \\
	&\le \sqrt{8}\sqrt{\sum_{k=0}^{K} s_k\log^2\frac{4e^2n}{s_k}}\\
	&\overset{(*)}{\le} \sqrt{8}\sqrt{\sum_{k=0}^{K} 4^{k+1}\log^2\frac{4e^2n}{4^{k+1}}} \\
	&\le \sqrt{32}\sum_{k=0}^{K} 2^k\log\frac{e^2n}{4^k} \\
	&= \sqrt{32}\left(\log(e^2n)\sum_{k=0}^{K} 2^k - \log(4)\sum_{k=0}^{K} k2^k\right),
\end{split}
\]
where the inequality $(*)$ follows from increasingness of a function $s\mapsto s\log^2\frac{4e^2n}{s}$ as it has a nonnegative derivative $\log^2\frac{4e^2n}{s} - 2\log\frac{4e^2n}{s}=(\log\frac{4e^2n}{s})\log\frac{4n}{s}\ge 0$ for $s\le 4n$. With a help of Wolfram, the last term is equal to
\[
\begin{split}
	\sqrt{32}\left(\log(e^2n)(2^{K+1}-1) - \log(4)2(2^K K-2^K+1)\right).
\end{split}
\]
Hence, we conclude that
\[
\begin{split}
	\sum_{k=0}^{K}s_ka_k\log\frac{2en}{s_k}
	&\le 2^{K+1}\sqrt{32}\left(\log(e^2n) - \log(4)(K-1)\right) \\
	&= 2^{K+1}\sqrt{32}\log\frac{e^2n}{4^{K-1}} \\
	&= 2^{K+1}\sqrt{32}\log\frac{4e^2n}{4^{K}} \\
	&\overset{(**)}{\le} 2\sqrt{32n}\log\frac{4e^2n}{n} \\
	&= 2\sqrt{32n}\log(4e^2),
\end{split}
\]
where the inequality $(**)$ follows from increasingness of a function $s\mapsto s\log\frac{4e^2n}{s^2}$ as it has a nonnegative derivative $\log\frac{16e^2n}{s^2}-2=\log\frac{16n}{s^2}\ge 0$ for $s\le\sqrt{n}$.

{\bf Summary.} Combining these results gives that the high-probability event $E^c$ implies
\[
\begin{split}
 \fronorm{AX^\top}
 &\le 2(1+t)\sqrt{n}\sig^2\left(\sqrt{m\sig^2}\vee\frac{2}{3}\right)2\sqrt{32n}\log(4e^2) \\
 &= 16\sqrt{2}\log(4e^2)(1+t)n\sig^2\left(\sqrt{m\sig^2}\vee\frac{2}{3}\right) \\
 &\le 34(1+t)n\sig^2\left(\sqrt{m\sig^2}\vee\frac{2}{3}\right). \\
\end{split}
\]
\end{proof}

Notice that in previous lemma, if $\abs{A_{ij}}=\Var(X_{ij})=\sig^2$, then the variance of a single entry of $AX^\top $ is $\Var((AX^\top )_{ij})=m\sig^6$ and the expected squared Frobenius norm is $\E\fronorm{AX^\top}^2 = n^2\sig^4m\sig^2$, which makes the high-probability bound rate-optimal.

\section{Proof of Theorem~\ref{thm:hollow gram matrix bound}}
\label{app:thm:hollow gram matrix bound}

This appendix analyzes concentration of a product $XX^\top $, where the entries of the random matrix $X\in \Z^{n\times m}$ are independent. The focus is only on the off-diagonal entries, because the diagonal and off-diagonal entries concentrate at different rates, as the following simple proposition suggests.

\begin{proposition}\label{lem:Expected Bernoulli Gram matrix}
Let $X\in\R^{n\times m}$ be a random matrix with independent Bernoulli-distributed entries $X_{ij}\eqd\Ber(p)$. Define a mask $M\in\R^{n\times n}$ by $M_{ij}=\I(i\neq j)$ and denote the elementwise matrix product by $\odot$. Then $A=XX^\top -\E XX^\top $ satisfies
\[
\begin{split}
	\sqrt{\E\spenorm{A\odot I}^2}
	&\ge \SD((XX^\top )_{ii})
	= \sqrt{mp(1-p)}, \\
	\sqrt{\E\spenorm{A\odot M}^2}
	&\ge \sqrt{\E\norm{(A\odot M)_{i:}}^2}
	= \sqrt{(n-1)m(1-p^2)}p.\\
\end{split}
\]
\end{proposition}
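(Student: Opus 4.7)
The proposition is a clean variance computation, so my plan is to reduce both inequalities to elementary second-moment identities. The only non-routine ingredients are the standard lower bounds $\spenorm{B} \ge \max_i |B_{ii}|$ for diagonal $B$ and $\spenorm{B} \ge \max_i \norm{B_{i:}}$ for general $B$.

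\textbf{Diagonal part.} I would first observe that $A \odot I$ is a diagonal matrix whose $(i,i)$-entry equals $(XX^\top)_{ii} - \E (XX^\top)_{ii}$. The spectral norm of a diagonal matrix equals the maximum absolute value of its entries, so $\spenorm{A \odot I}^2 \ge \bigl((XX^\top)_{ii} - \E(XX^\top)_{ii}\bigr)^2$ for any fixed $i$. Taking expectations gives $\E \spenorm{A \odot I}^2 \ge \Var\bigl((XX^\top)_{ii}\bigr)$. Finally, since $X_{ij}\in\{0,1\}$ implies $X_{ij}^2 = X_{ij}$, we have $(XX^\top)_{ii} = \sum_{j=1}^m X_{ij} \sim \Bin(m,p)$, whose variance is $mp(1-p)$.

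\textbf{Off-diagonal part.} For the general bound, I would start from $\spenorm{A \odot M}^2 \ge \norm{(A \odot M)_{i:}}^2$, which holds because the squared spectral norm of any matrix dominates the squared $\ell^2$-norm of any of its rows. Taking expectations and using independence of the entries of the $i$th row of $A \odot M$ yields
\[
\E \norm{(A \odot M)_{i:}}^2 \weq \sum_{j \ne i} \Var\bigl((XX^\top)_{ij}\bigr).
\]
For $j \ne i$, the random variable $(XX^\top)_{ij} = \sum_{k=1}^m X_{ik} X_{jk}$ is a sum of $m$ independent $\Ber(p^2)$ summands (since $X_{ik}$ and $X_{jk}$ are independent Bernoullis), giving $\Var((XX^\top)_{ij}) = m p^2 (1-p^2)$. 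Summing over the $n-1$ choices of $j$ gives $(n-1)m(1-p^2)p^2$, and taking the square root yields the stated expression.

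\textbf{Obstacles.} There are essentially none: the proof is a direct two-step computation, and the only care needed is to verify that the cross-covariances $\Cov(X_{ik}X_{jk}, X_{ik'}X_{jk'})$ vanish for $k \ne k'$, which follows from independence of the columns of $X$. No delicate concentration or combinatorial argument is required, so the proof can be written in a few lines.
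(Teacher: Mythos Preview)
Your approach is essentially identical to the paper's proof. One small slip: the entries of the $i$th row of $A\odot M$ are \emph{not} independent (for $j\ne j'$ both $A_{ij}$ and $A_{ij'}$ involve the common variables $X_{i1},\dots,X_{im}$), but you do not actually need that claim---the displayed identity $\E\norm{(A\odot M)_{i:}}^2 = \sum_{j\ne i}\Var((XX^\top)_{ij})$ follows from linearity of expectation together with $\E A_{ij}=0$, exactly as the paper does it.
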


\begin{proof}
To estimate $\spenorm{A\odot I}$, first notice that $A_{ii}=\sum_{j}X_{ij}^2-\E X_{ij}^2$ is a sum of independent centered entries. Then
\[
\E\spenorm{A\odot I}^2
= \E\max_iA_{ii}^2
\ge \E A_{ii}^2
= \Var A_{ii} 
= \sum_{j}\Var(X_{ij}^2-\E X_{ij}^2)
= \sum_{j}\Var X_{ij}^2.
\]
Since $X_{ij}$ is a binary random variable, $X_{ij}^2=X_{ij}$ is simply a Bernoulli distributed random variable with variance $p(1-p)$. Hence, we conclude $\E\spenorm{A\odot I}^2\geq mp(1-p)$.

To estimate $\spenorm{A\odot M}$, first notice that $A_{ii'} = \sum_{j}X_{ij}X_{i'j} - \E X_{ij}X_{i'j}$ is a sum of independent centered entries. Then
\[
\begin{split}
	\E\spenorm{A\odot M}^2
	&\ge \E\norm{(A\odot M)_{i:}}^2 
	= \E\sum_{i':i'\neq i} A_{ii'}^2 
	= \sum_{i':i'\neq i} \Var A_{ii'} \\
	&= \sum_{i':i'\neq i}\sum_{j}  \Var(X_{ij}X_{i'j} - \E X_{ij}X_{i'j})
	= \sum_{i':i'\neq i}\sum_{j}  \Var(X_{ij}X_{i'j}).
\end{split}
\]
Since $X_{ij}$ and $X_{i'j}$ are independent binary random variables, their product is also Bernoulli distributed random variable with parameter $p^2$. Hence $\Var(X_{ij}X_{i'j}) = p^2(1-p^2)$ and $\E\spenorm{A\odot M}^2\geq (n-1)mp^2(1-p^2)$.
\end{proof}

This suggests that $XX^\top  - \E XX^\top $ consists of two parts, a diagonal and an off-diagonal part. For small $p\ll 1$, we expect the norm of the diagonal to be of order $\sqrt{mp}$ and the norm of the off-diagonal to be of order $\sqrt{nm}p$. If this holds, then the diagonal dominates the off-diagonal, when $\sqrt{mp}\gg\sqrt{nm}p$, or equivalently, $p\ll n^{-1}$. When we consider square matrices $m=n$, we rarely consider $p\ll n^{-1}$, but when $m\gg n$, we may be interested in the regime $p\ll n^{-1}$ and this phenomenon will become significant. The rest of this subsection aims at proving that the off-diagonal part is bounded from above by $\mathcal{O}(\sqrt{nm}p)$, or in a slightly more general setting, $\mathcal{O}(\sqrt{nm}\sig^2)$.

As the first step, Lemma \ref{lem:row L1 bound} shows that the $L_1$-norm of almost every row of $X-\E X$ concentrates well with high probability. Lemma \ref{lem:product of centered and integer matrices} bounds a product $X_1X_2^\top $ of two independent matrices $X_1,X_2$ with independent entries. The key observation is that the entries of $X_1X_2^\top $ are conditionally independent entries given $X_2$, when $X_2$ is sufficiently sparse. This allows us to apply Theorem \ref{the:TrimmedMatrixConcentration}. Lemma \ref{lem:Decoupling} applies a well-known decoupling technique (see for example Chapter 6 in \cite{Vershynin_2018}) to analyze a product $(X-\E X)X^\top $ as if $X$ and $X^\top $ were independent. The decoupling technique applies to expectations of convex functions of $(X-\E X)X^\top $ which is then converted to a tail bound in the proof. Theorem \ref{thm:hollow gram matrix bound} finally bounds the off-diagonal part of the Gram matrix $XX^\top $. The desired norm bound is obtained by removing certain rows and columns, and Lemma \ref{lem:hollow gram matrix row L1 bound} asserts that only a few rows and columns are removed. The bound presented here is derived with a simple application of Markov's inequality, and consequently the probability bound is worse than the bound in Lemma \ref{lem:row L1 bound}.

\begin{lemma}
\label{lem:row L1 bound}
Let $X\in\R^{n\times m}$ be a random matrix with independent centered
sub-Poisson entries with variance proxy $\sig^2$.
Define $S_i = \onenorm{X_{i:}} - \E \onenorm{X_{i:}}$.
If $m \sig^2 \le 8 \log en$, then
\[
 \Pr\left(\sum_{i=1}^{n}\I\set*{S_i\ge (1+t)m\sig^2}\ge \frac{en}{e^{m\sig^2/8}}\right)
 \wle (en)^{-t}
 \qquad\text{for all $t \ge 0$}.
\]
If $m\sig^2\ge 8\log en$, then
\begin{align*}
 \Pr\left(\max_i S_i \ge (1+t)\sqrt{8m\sig^2\log en}\right)
 &\wle (en)^{-t}
 \qquad \text{for all $t \ge 0$}.
\end{align*}
\end{lemma}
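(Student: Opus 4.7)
The plan is to reduce the claim to a per-row tail bound via Bennett's inequality and then handle the two variance regimes separately: a direct union bound in Case~2, and a Chernoff-type bound on the count of ``bad'' rows in Case~1.

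First, write $S_i = \sum_{j=1}^m (\abs{X_{ij}} - \E \abs{X_{ij}})$ as a sum of $m$ independent centered random variables. Since each $X_{ij}$ is centered sub-Poisson with variance proxy $\sig^2$, \SPcite{absolute value} gives that $\abs{X_{ij}}$ is again sub-Poisson with variance proxy $\sig^2$, and recentering preserves this; then \SPcite{independent sum} shows that $S_i$ is sub-Poisson with variance proxy $m\sig^2$. Bennett's inequality \eqref{eq:Bennett} together with its Bernstein consequences \eqref{eq:Bernstein1}--\eqref{eq:Bernstein2} therefore yields the per-row tail bound $\Pr(S_i \ge a) \le \beta(a, m\sig^2)$ for every $a \ge 0$.

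For Case~2 ($m\sig^2 \ge 8 \log en$), I would apply \eqref{eq:Bernstein2} at $a = (1+t)\sqrt{8 m \sig^2 \log en}$. The hypothesis $m\sig^2 \ge 8\log en$ places $a$ in the Gaussian branch for $t \le 2$, giving $\Pr(S_i \ge a) \le \exp(-a^2/(4 m \sig^2)) = (en)^{-2(1+t)^2}$; for $t > 2$ the linear branch produces the even sharper bound $(en)^{-6(1+t)}$. In either case the elementary inequality $2(1+t)^2 - 1 \ge t$ (or its analogue for the linear branch) shows that a union bound over $i \in [n]$ absorbs the extra factor $n$ and leaves the tail at most $(en)^{-t}$, as claimed.

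For Case~1 ($m\sig^2 \le 8 \log en$), the per-row tail is too weak for a union bound, so I instead control the count $N = \sum_{i=1}^n \I\set{S_i \ge (1+t)m\sig^2}$. Row independence makes $N$ stochastically dominated by $\Bin(n,p)$ with $p \le \beta((1+t) m\sig^2, m\sig^2) \le (e/(1+t))^{(1+t) m\sig^2}$ by Lemma~\ref{the:BennettBijection}. The standard binomial Chernoff tail $\Pr(\Bin(n,p) \ge k) \le (enp/k)^k$ at the calibrated threshold $k = en e^{-m\sig^2/8}$ reduces the target to $(p e^{m\sig^2/8})^k \le (en)^{-t}$. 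When $m\sig^2 \le 8$ this is vacuous, since $k \ge n \ge N$ deterministically; for $8 \le m\sig^2 \le 8 \log en$, the Bennett expression for $\log p$ combines with the factor $e^{m\sig^2/8}$ and the large exponent $k$ to give the claim after routine arithmetic. The main obstacle lies in this calibration: the constant $1/8$ in $e^{-m\sig^2/8}$ is tuned so that the Chernoff amplification by $k$ precisely offsets the per-row Bennett tail uniformly in $t \ge 0$ and $m\sig^2 \in [0, 8\log en]$, and tracking the $(2+t)\log(2+t)$ factor in $\log\beta((1+t) m\sig^2, m\sig^2)$ across the transition point $m\sig^2 = 8$ is the only non-trivial bookkeeping.
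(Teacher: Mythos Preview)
Your approach is essentially the paper's: both use the sub-Poisson property of $|X_{ij}|$ to get a Bernstein tail for each $S_i$, then control the count of bad rows via the union-over-subsets bound $\Pr(\sum_i \I\{S_i\ge a\}\ge k)\le\binom{n}{k}p^k\le(en/k)^kp^k$, with Case~2 being simply $k=1$. The paper unifies the two cases by first fixing $k$ and solving $\log\frac{en}{k}=\frac{a^2}{8m\sig^2}\wedge\frac{3a}{4}$ for the threshold $a$ via Lemma~\ref{lem:inverse of minimum}, then specializing $k=\lceil en\,e^{-m\sig^2/8}\rceil$ (Case~1) or $k=1$ (Case~2); your route fixes the threshold and back-solves for $k$, which is equivalent but leaves more arithmetic to verify.

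One genuine slip: \SPcite{absolute value} gives that $|X_{ij}|$ is (upper) sub-Poisson with variance proxy $2\sig^2$, not $\sig^2$, so $S_i$ has proxy $2m\sig^2$ and the Bernstein denominator is $8m\sig^2$ rather than $4m\sig^2$. This halves your exponents in Case~2 to $(en)^{-(1+t)^2}$ and in Case~1 shifts the Gaussian branch to $p\le\exp(-(1+t)^2m\sig^2/8)$, but both still close: in Case~2 one has $(1+t)^2-1\ge t$, and in Case~1 your key inequality $km\sig^2/8\ge\log(en)$ on $[8,8\log en]$ together with $(2+t)\ge 1$ suffices. So the structure is sound; just correct the factor of two and the ``routine arithmetic'' you defer is indeed routine.
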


That is, if $m\sig^2\ge 8\log en$, then the largest $L_1$-norm is bounded by $\max_i\norm{X_{i:}}_1\le\max_i\E\norm{X_{i:}}_1 + C\sqrt{m\sig^2\log en}$ with high probability as $n\to\infty$, and if $1\ll m\sig^2\le 8\log en$, then the fraction of ``large'' $L_1$-norms $\frac{1}{n}\sum_{i=1}^{n}\I\set*{\norm{X_{i:}}_1\ge \E\norm{X_{i:}}_1 + 2m\sig^2}\ll 1$ tends to zero with high probability as $n\to\infty$, and if $C\le m\sig^2\le 8\log en$ for some constant $C$, then the fraction of ``large'' $L_1$-norms $\frac{1}{n}\sum_{i=1}^{n}\I\set*{\norm{X_{i:}}_1\ge \E\norm{X_{i:}}_1 + 2m\sig^2}\le e^{1-C/8}$ is at most of constant order with high probability as $n\to\infty$, but this fraction can be made arbitrarily small by setting $C$ large enough. Under an extra assumption $\E\abs{X_{ij}} \lesssim \sig^2$, we have $\E\norm{X_{i:}}_1\lesssim m\sig^2$ and the $L_1$-norm bounds are in the order of $m\sig^2$.

\begin{proof}
By \SPcite{absolute value}, the random variables $\abs{X_{ij}}$ are upper sub-Poisson with variance proxy $2\sig^2$. By \SPcite{independent sum}, the sums $S_i$ are upper sub-Poisson with variance proxy $2m\sig^2$. Fix an integer $k\geq 1$ and a real number $t\geq 0$. By Bernstein's inequality \eqref{eq:Bernstein2} and the inequality $\binom{n}{k}\leq(en/k)^k$, we have
\[
\begin{split}
 \Pr\left(\sum_{i=1}^{n}\I\set*{S_i\ge t}\ge k\right)
 &\wle \sum_{A\in\binom{[n]}{k}}\prod_{i\in A}\Pr\set{S_i\ge t}
 \wle \binom{n}{k}\left(\exp\left(-\left(\frac{t^2}{8m\sig^2}\wedge\frac{3t}{4}\right)\right)\right)^{k} \\
 &\wle \exp\left(k\left(\log\frac{en}{k}-\left(\frac{t^2}{8m\sig^2} \wedge \frac{3t}{4}\right)\right)\right).
\end{split}
\]
By substituting $t = (1+s)a$ with $a,s \ge 0$ and noting that $1+s \le (1+s)^2$, we find that
\[
 \Pr\left(\sum_{i=1}^{n}\I\set*{S_i\ge (1+s)a}\ge k\right)
 \wle \exp\left(k\left(\log\frac{en}{k}-(1+s)\left(\frac{a^2}{8m\sig^2} \wedge \frac{3a}{4}\right)\right)\right).
\]
For a clean bound, we choose $a$ so that $\log\frac{en}{k} = \frac{a^2}{8m\sig^2} \wedge \frac{3a}{4}$.
With the help of 
Lemma~\ref{lem:inverse of minimum},
we find that $a = \sqrt{8 m \sig^2 \log \frac{en}{k}} \vee \frac{4\log\frac{en}{k}}{3}$. 
We conclude that
\[
\Pr\left(\sum_{i=1}^{n}\I\set*{S_i\ge (1+s)\left(\sqrt{8m\sig^2\log\frac{en}{k}}\vee\frac{4\log\frac{en}{k}}{3}\right)}\ge k\right)
\wle\exp\left(-k\log\left(\frac{en}{k}\right)s\right).
\]
Notice that the function $k\mapsto k\log\frac{en}{k}$ is increasing for $1\le k\le n$ since it has a nonnegative derivative $\log\frac{en}{k}-1=\log\frac{n}{k}\ge 0$. Furthermore, since it is impossible for a sum of $n$ indicators to exceed $n$, we obtain the claimed probability bound $(en)^{-s}$. 

Choose $k=\ceil{\frac{en}{e^{m\sig^2/8}}}$. In the case $m\sig^2\le 8\log en$ we estimate $k\geq\frac{en}{e^{m\sig^2/8}}$. This in turn implies
\[
\begin{split}
	\sqrt{8m\sig^2\log\frac{en}{k}}\vee\frac{4\log\frac{en}{k}}{3}
	&\wle m\sig^2\vee\frac{m\sig^2}{6}
	\weq m\sig^2, 
\end{split}
\] 
and
\[
\Pr\left(\sum_{i=1}^{n}\I\set*{S_i\ge (1+s)m\sig^2}\ge \frac{en}{e^{m\sig^2/8}}\right)
\wle (en)^{-s}.
\]
In the case $m\sig^2\ge 8\log en$ we observe that $k=1$. This in turn implies 
\[
\begin{split}
	\sqrt{8m\sig^2\log\frac{en}{k}}\vee\frac{4\log\frac{en}{k}}{3}
	&\wle \sqrt{8m\sig^2\log en}\vee\frac{4}{3}\sqrt{\frac{m\sig ^2\log en}{8}} 
	\weq \sqrt{8m\sig^2\log en},
\end{split}
\]
and
\[
\Pr\left(\max_iS_i\ge (1+s)\sqrt{8m\sig^2\log en}\right) 
\weq \Pr\left(\sum_{i=1}^{n}\I\set*{S_i\ge (1+s)\sqrt{8m\sig^2\log en}}\ge 1\right) 
\wle(en)^{-s}.
\]
\end{proof}

\begin{lemma}
\label{lem:product of centered and integer matrices}
Fix positive integers $m,n_1,n_2$ such that $n=n_1+n_2$
satisfies $8 \log en \le m \sig^2$, and $3 e^2 n \sig^2 \le 1$.
Let $X_1 \in \R^{n_1\times m}, X_2 \in \Z^{n_2\times m}$
be independent random matrices
having independent sub-Poisson entries with variance proxy $\sig^2$,
and such that
$\E (X_1)_{ij} = 0$, 
$\E \abs{(X_1)_{ij}} \le \sig^2$,
and $\E \abs{(X_2)_{ij}} \le \sig^2$.
\begin{enumerate}[(i)]
\item Then
\[
 \begin{split}
 \Pr\left( \onenorm{( \abs{X_1} \, \abs{X_2}^\top )_{i:}} \ge t n m \sig^4 \right) 
  &\wle \frac{\sqrt{t}}{\sqrt{6}}\exp\left(-\frac{\sqrt{t}nm\sig^4}{40\sqrt{6}}\right) + 2n^{-1}e^{-\sqrt{t}/2\sqrt{6}}
 \end{split}
\]
for all
$
 t \ge  6 \vee \frac{96\log^2 m}{\log^2(1/3en\sig^2)}.
$
	
\item There exists an absolute constant $C$ such that
\begin{equation}
 \label{eq:YPrime}
 \Pr\left(\spenorm{ X_1 X_2^\top \odot N} \ge C \left(t + \Ctrim\right)\sqrt{nm}\sig^2\right)
 \wle 8n^{-1}e^{-t^{1/3}/3}
\end{equation}
for all
$ t\ge 5^{3/2}\vee\frac{27\log^{3}m}{\log^{3}(1/3en\sig^2)}$ 
where
$N \in \{0,1\}^{n_1 \times n_2}$ is defined by
\[
 N_{ij}
 \weq \indic \Big(
 \onenorm{(\abs{X_1} \, \abs{X_2}^\top )_{i:}} \vee
 \onenorm{(\abs{X_1} \, \abs{X_2}^\top )_{:j}} \le \Ctrim nm\sig^4
 \Big). 
\]
\end{enumerate}
\end{lemma}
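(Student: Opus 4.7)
For part~(i), I would write the row $L_1$-norm as $\onenorm{(\abs{X_1}\abs{X_2}^\top)_{i:}} = \sum_k \abs{(X_1)_{ik}} T_k$ with $T_k := \sum_j \abs{(X_2)_{jk}}$, whose unconditional expectation is bounded by $nm\sig^4$. The plan is a two-step conditioning argument. First, by \SPcite{absolute value}, the centered entries $\abs{(X_2)_{jk}} - \E\abs{(X_2)_{jk}}$ are sub-Poisson with variance proxy $2\sig^2$, and since $6e^2 n\sig^2 \le 1$, the sparse branch of Lemma~\ref{lem:row L1 bound} (applied to the transpose $\abs{X_2}^\top \in \Z^{m \times n_2}$) applies: it controls the \emph{fraction} of columns $k$ with $T_k$ noticeably above $n_2\sig^2$, rather than the maximum. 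Conditioning on the resulting $T_k$'s and applying Bennett's inequality \eqref{eq:Bennett} to the weighted sum $\sum_k \abs{(X_1)_{ik}}T_k$ (exploiting that most weights $T_k$ are of order $n\sig^2$ and only a controlled number are substantially larger) produces the claimed tail. The logarithmic factor $\log^2 m / \log^2(1/(3en\sig^2))$ in the admissible range of $t$ is exactly the price paid for the union bound over ``large column'' events.

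For part~(ii), the plan is to condition on $X_2$ and apply Theorem~\ref{the:TrimmedMatrixConcentration} to $Y := X_1 X_2^\top$. The main obstacle is that the entries of $Y$ are \emph{not} independent: for fixed $i$, the entries $Y_{ij} = \sum_k (X_1)_{ik}(X_2)_{jk}$ across $j$ all depend on $X_{1,i:}$. I would bypass this by exploiting sparsity: under $3e^2 n\sig^2 \le 1$, distinct rows of $X_2$ have nearly disjoint supports with high probability, so the entries of $Y$ are conditionally ``almost independent''. Concretely, I would split $X_2 = X_2^\flat + X_2^\sharp$, where $X_2^\flat$ consists of entries lying in rows and columns of bounded support and magnitude, and $X_2^\sharp$ is a residual heavy part that is small with high probability by Bennett's inequality applied to the column $L_1$-norms of $\abs{X_2}$.

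On the light piece, the conditional sub-Poisson variance proxy of each entry of $X_1(X_2^\flat)^\top$ is of order $\sig^4$, so Theorem~\ref{the:TrimmedMatrixConcentration} applied conditionally, combined with the trimming mask $N$ (whose definition precisely parallels the trimming in that theorem), yields $\spenorm{X_1(X_2^\flat)^\top \odot N} \lesssim (t+\Ctrim)\sqrt{nm}\,\sig^2$. For the heavy piece, I would invoke Lemma~\ref{lem:AX^T bound} with $A = X_2^\sharp$ treated as deterministic after conditioning (with bounded entries of size $\lesssim \sig^2$) and $X = X_1$; the Frobenius bound supplied there is enough to absorb this contribution because $X_2^\sharp$ is effectively of low rank with high probability. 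Summing the two via the triangle inequality and union-bounding over the structural events on $X_2$ should give \eqref{eq:YPrime}. The hardest step will be calibrating the split $X_2 = X_2^\flat + X_2^\sharp$ so that both pieces can be controlled with the \emph{same} trimming mask $N$, while keeping the variance proxy on the light component tight enough to produce the sharp rate $\sqrt{nm}\,\sig^2$ rather than the looser rate $\sqrt{nm}\,\sig$ that a crude application of sub-Poisson aggregation would yield.
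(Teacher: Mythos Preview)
Your intuition is sound for both parts, but the proposal has a concrete gap, and you are missing the paper's central technical device. For part~(ii), Theorem~\ref{the:TrimmedMatrixConcentration} requires \emph{exactly} independent entries, not ``almost'' independent ones. Your split $X_2=X_2^\flat+X_2^\sharp$ by bounded support and magnitude does not deliver this: even if each row of $X_2^\flat$ has few nonzeros, two rows can still overlap in some column $\ell$, and then $(X_1(X_2^\flat)^\top)_{ij_1}$ and $(X_1(X_2^\flat)^\top)_{ij_2}$ both depend on $(X_1)_{i\ell}$. The paper's fix is a \emph{layer decomposition} $X_2=\sum_{k=1}^{K_{\max}}X_2^{(k)}$ in which each $X_2^{(k)}\in\{-1,0,1\}^{n_2\times m}$ has at most one nonzero entry \emph{per column}; this forces the rows of each layer to have pairwise disjoint supports, so every $Y^{(k)}=X_1(X_2^{(k)})^\top$ has genuinely independent entries conditional on $X_2$, and Theorem~\ref{the:TrimmedMatrixConcentration} applies layer by layer with conditional variance proxy $\max_j\|(X_2)_{j:}\|_1\,\sig^2\lesssim m\sig^4$. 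The layer count $K_{\max}$ equals the maximal column $L_1$-norm of $X_2$ and is controlled directly by Bennett's inequality, giving the $m(3en\sig^2/k)^k$ tail. Separately, your plan to absorb $X_1(X_2^\sharp)^\top$ via Lemma~\ref{lem:AX^T bound} does not go through as stated: that lemma requires the deterministic factor to have entries bounded by $\sig^2$, but the nonzero entries of $X_2^\sharp\in\Z^{n_2\times m}$ have magnitude at least $1\gg\sig^2$.

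For part~(i), writing the row norm as $\sum_\ell |(X_1)_{i\ell}|T_\ell$ is fine, but the sparse branch of Lemma~\ref{lem:row L1 bound} applied to $|X_2|^\top$ is vacuous in this regime: the row length there is $n_2$, so the threshold count is $em\cdot e^{-cn_2\sig^2}$, which exceeds $m$ since $n_2\sig^2\le 1/(3e^2)$; the lemma therefore permits every $T_\ell$ to be large. The paper reuses the same layer decomposition here, bounding $\onenorm{(|X_1||X_2|^\top)_{i:}}\le\sum_{k\le K_{\max}}\sum_{\ell:\,\exists j,\,(X_2^{(k)})_{j\ell}\ne 0}|(X_1)_{i\ell}|$, where each inner sum has at most $\sum_{j,\ell}|(X_2^{(k)})_{j\ell}|\le n\max_j\|(X_2)_{j:}\|_1$ terms; the row maximum $\max_j\|(X_2)_{j:}\|_1$ is controlled by the \emph{dense} branch of Lemma~\ref{lem:row L1 bound} applied to $X_2$ itself (not its transpose), and Bernstein conditional on $X_2$ finishes.
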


In particular,
$\spenorm{X_1 X_2^\top \odot N} \lesssim (\log m)^3\sqrt{nm}\sig^2$ with high probability
when $n \gg 1$ and $n\sig^2 \lesssim 1$;
and
$\spenorm{X_1 X_2^\top \odot N}\lesssim \sqrt{nm}\sig^2$ with high probability
when $n \gg 1$ and $n\sig^2\lesssim m^{-c}$ for some constant $c>0$.

\begin{proof}
Denote $Y = X_1 X_2^\top$.
Decompose the integer matrix as $X_2 = \sum_{k=1}^{\infty}X_2^{(k)}$, where matrices $X_2^{(k)}$ are defined recursively by 
\[
X_2^{(k)}
= \argmin_{X\in\set{0,\pm 1}^{n_2\times m}:\max_j\norm{X_{:j}}_1\le 1}\sum_{i,j}\abs*{\left(X_2 - \left(X + \sum_{k'=1}^{k-1}X_2^{(k')}\right)\right)_{ij}},
\qquad k\ge 1
\]
with arbitrary tie breaks as there might not be unique minimizers. As an example of such a decomposition, consider the following small integer matrix
\[
\begin{bmatrix}
	3&-2&1 \\
	0&0&-1 \\
	0&1&0
\end{bmatrix}
=
\begin{bmatrix}
	1&-1&1 \\
	0&0&0 \\
	0&0&0
\end{bmatrix}
+
\begin{bmatrix}
	1&-1&0 \\
	0&0&-1 \\
	0&0&0
\end{bmatrix}
+
\begin{bmatrix}
	1&0&0 \\
	0&0&0 \\
	0&1&0
\end{bmatrix}.
\]
That is, the components are matrices with entries being in $\set{-1,0,1}$, each column having at most one nonzero entry and the $k$th component picks ``the $k$th one of each column'' if it exists. Since each column has at most one nonzero entry, the supports of the rows of $X_2^{(k)}$ are disjoint.

Notice that the summands are eventually zero matrices as each entry of $X_2$ is finite almost surely. Let $K_{\rm max}$ denote the number of nonzero components (which is random as $X_2$ is random).
By \SPcite{absolute value}, the random variables $\abs{X_{2}(i,j) - \E X_{2}(i,j)}$ are upper sub-Poisson with variance proxy $2\sig^2\le3\sig^2$. Consequently, the sums $\sum_{i}\abs{X_{2}(i,j) - \E X_{2}(i,j)}$ are upper sub-Poisson with variance proxy $3n_2\sig^2\le 3n\sig^2$ by \SPcite{independent sum}. Now for any $k_{\rm max}\ge 3n\sig^2$, Bennett's inequality \eqref{eq:Bennett} gives a tail bound
\begin{equation}\label{eq:Kmax bound}
\begin{split}
	&\Pr\left(K_{\rm max}\ge k_{\rm max}\right) \\
	&\weq \Pr\left(\bigcup_{j=1}^{m}\set*{\sum_{i=1}^{n_2}\abs{X_2(i,j)}\ge k_{\rm max}}\right) \\
	&\wle \sum_{j=1}^{m}\Pr\left(\sum_{i=1}^{n_2}\abs{X_2(i,j) - \E X_{2}(i,j)} + \abs{\E X_{2}(i,j)}\ge k_{\rm max}\right) \\
	&\wle \sum_{j=1}^{m}\Pr\bigg(\sum_{i=1}^{n_2}\abs{X_2(i,j) - \E X_{2}(i,j)} - \E\abs{X_2(i,j) - \E X_{2}(i,j)} \ge k_{\rm max} \\&\quad- \sum_{i=1}^{n_2}\abs{\E X_{2}(i,j)} + \E\abs{X_2(i,j) - \E X_{2}(i,j)}\bigg) \\
	&\wle \sum_{j=1}^{m}\Pr\bigg(\sum_{i=1}^{n_2}\abs{X_2(i,j) - \E X_{2}(i,j)} - \E\abs{X_2(i,j) - \E X_{2}(i,j)} \ge k_{\rm max} -3n\sig^2\bigg) \\
	&\overset{\eqref{eq:Bennett}}{\wle} me^{-3n\sig^2}\left(\frac{e3n\sig^2}{3n\sig^2 + k_{\rm max} -3n\sig^2}\right)^{3n\sig^2 +  k_{\rm max} -3n\sig^2} \\
	&\wle m\left(\frac{3en\sig^2}{k_{\rm max}}\right)^{k_{\rm max}}.
\end{split}
\end{equation}
The resulting upper bound holds also for $0<k_{\rm max}\le 3n\sig^2$ as the upper bound becomes trivial.

Conditioned on $X_2$, the product $Y^{(k)}= X_1(X_2^{(k)})^\top$ consists of independent entries (since the rows of $X_2^{(k)}$ have disjoint supports) and by \SPcite{independent sum}, they are sub-Poisson with variance proxy $\max_j\norm{(X_2^{(k)})_{j:}}_1\sig^{2}\le \max_j\norm{(X_2)_{j:}}_1\sig^{2}$. Conditioned on $X_2$, the sum $\sum_{l:\exists j:(X_2^{(k)})_{jl}\ne 0}\abs{(X_1)_{il}}$ consists of at most $\sum_{j,l}\abs{(X_2^{(k)})_{jl}}\le n\max_j\norm{(X_2)_{j:}}_1$ independent random variables that are upper sub-Poisson with variance proxy $2\sig^2$ by \SPcite{absolute value}. Hence, the sum $\sum_{l:\exists j:(X_2^{(k)})_{jl}\ne 0}\abs{(X_1)_{il}}$ is upper sub-Poisson with variance proxy $2n\max_j\norm{(X_2)_{j:}}_1\sig^2$ by \SPcite{independent sum}. The next objective is to bound $\max_{j}\norm{(X_2)_{j:}}_1$.

For any $t\ge 1$, the assumptions $\E\abs{(X_2)_{ij}}\le\sig^2$ and $8\log en\le m\sig^2$ and Lemma \ref{lem:row L1 bound} give a tail bound
\begin{equation}\label{eq:X2 row bound}
\begin{split}
	&\Pr\left(\max_j\norm{(X_2)_{j:}}_1\ge 5tm\sig^2\right) \\ 
	&\le \Pr\left(\max_j\norm{(X_2 - \E X_2)_{j:}}_1 + \norm{(\E X_2)_{j:}}_1\ge 5tm\sig^2\right) \\ 
	&\le \Pr\left(\max_j\norm{(X_2 - \E X_2)_{j:}}_1\ge 4tm\sig^2\right) \\ 
	&\le \Pr\left(\bigcup_{j}\set*{\norm{(X_2 - \E X_2)_{j:}}_1\ge \E\norm{(X_2 - \E X_2)_{j:}}_1 + (1+t)\sqrt{8m\sig^2\log en}}\right) \\
	&\overset{\eqref{lem:row L1 bound}}{\le} (en)^{-t}.
\end{split}
\end{equation}
That is, conditioned on $X_2$ under a high-probability event $\bigcap_i\set{\norm{(X_2)_{j:}}_1< 5tm\sig^2}$, the entries $Y_{ij}^{(k)}$ are sub-Poisson with variance proxy $5tm\sig^4$, and the sums $\sum_{l:\exists j:(X_2^{(k)})_{jl}\ne 0}\abs{(X_1)_{il}}$ are upper sub-Poisson with variance proxy $10tnm\sig^4$.

(i) Fix $s\ge 1\vee \frac{4\log m}{\log(1/3en\sig^2)}$
and $k_{\rm max} = \floor{s}$. Recall that each column of the matrix $X_2^{(k)}$ has at most one nonzero entry. By inequalities \eqref{eq:Kmax bound} and \eqref{eq:X2 row bound} we have that
\[
\begin{split}
&\Pr\left(\onenorm{( \abs{X_1} \, \abs{X_2}^\top )_{i:}}\ge 6s^2nm\sig^4\right) \\
&\le \Pr\bigg(\sum_{k\le k_{\rm max}}\sum_{j,l}\abs{(X_1)_{il}(X_2^{(k)})_{jl}}\ge 6k_{\rm max}snm\sig^4,\ \max_{i}\norm{(X_2)_{i:}}_1< 5sm\sig^2,\ K_{\rm max} \le k_{\rm max}\bigg) \\&\quad+\Pr\left(\max_i\norm{(X_2)_{i:}}_1\ge 5sm\sig^2\right) + \Pr\left(K_{\rm max}\ge s\right) \\
&\le \sum_{k\le k_{\rm max}}\Pr\bigg(\sum_{l:\exists j:(X_2^{(k)})_{jl}\ne 0}\abs{(X_1)_{il}}\ge 6snm\sig^4,\ \max_i\norm{(X_2)_{i:}}_1 < 5sm\sig^2\bigg) +  (en)^{-s} + m\left(\frac{3en\sig^2}{s}\right)^{s}.
\end{split}
\]
Notice that if $\max_i\norm{(X_2)_{i:}}_1 < 5sm\sig^2$, then the sum $\sum_{l:\exists j:(X_2^{(k)})_{jl}\ne 0}\abs{(X_1)_{il}}$ consists of at most $\sum_i\norm{(X_2)_{i:}}_1\le 5snm\sig^2$ entries with expectations $\E\abs{(X_1)_{ij}}\le\sig^2$ by assumption. Hence Bernstein's inequality \eqref{eq:Bernstein2} allow us to bound the first term by
\[
\begin{split}
	&\sum_{k\le k_{\rm max}}\E\bigg[\Pr\bigg(\sum_{l:\exists j:(X_2^{(k)})_{jl}\ne 0}\abs{(X_1)_{il}}\ge \E\bigg[\sum_{l:\exists j:(X_2^{(k)})_{jl}\ne 0}\abs{(X_1)_{il}}\mid X_2\bigg] + snm\sig^4 \mid X_2\bigg)\\&\quad\times\I\set*{\max_i\norm{(X_2)_{i:}}_1 < 5sm\sig^2}\bigg] \\
	&\le k_{\rm max}\exp\left(-\left(\frac{s^2n^2m^2\sig^8}{4\cdot 10snm\sig^4}\wedge\frac{3snm\sig^4}{4}\right)\right) 
	\weq k_{\rm max}\exp\left(-\frac{snm\sig^4}{40}\right). 
\end{split}
\]
From combining the bounds and $k_{\rm max}\le s$ it follows that
\[
\Pr\left(\onenorm{( \abs{X_1} \, \abs{X_2}^\top )_{i:}}\ge 6s^2nm\sig^4\right)
\wle s\exp\left(-\frac{snm\sig^4}{40}\right) + (en)^{-s} + m\left(\frac{3en\sig^2}{s}\right)^{s}.
\]
Now it remains to simplify the probability bound. Because $s\ge \frac{4\log m}{\log(1/3en\sig^2)}$ and $3e^2n\sig^2\le 1$, we have that
\[
\begin{split}
m\left(\frac{3en\sig^2}{s}\right)^{s}
&\wle \exp\left(\log m - \frac{s}{2}\log\frac{1}{3en\sig^2 }\right)\left(3en\sig^2\right)^{s/2}
\wle m^{-1}e^{-s/2}.
\end{split} 
\]
From the assumptions $8\log en\le m\sig^2$ and $3e^2n\sig^2\le 1$ it follows that $m\ge 8\sig^{-2}\ge 24e^2n\ge n$. Because also $s\geq 1$, we may further bound $(en)^{-s}+m^{-1}e^{-s/2}\leq 2n^{-1}e^{-s/2}$. The claim follows by substituting $s=\sqrt{t/6}$ and writing the assumptions on $s$ in terms of $t=6s^2$.

(ii) Fix $s\geq 5\vee\frac{9\log^{2}m}{\log^2(1/3en\sigma^2)}$ and $k_{\rm max}
= \floor{\sqrt{s}}$. Let us apply Theorem~\ref{the:TrimmedMatrixConcentration} to the matrices $Y^{(k)}$ conditioned on $X_2$ under the event $\bigcap_i\set{\norm{(X_2)_{j:}}_1<5sm\sigma^2}$. 
The entries of $Y^{(k)}$ are conditionally independent with variance proxy $5sm\sigma^4$. The trimming mask $N$ guarantees that the $1$-norms of the rows and columns of $Y^{(k)}\odot N$ are bounded from above according to
\[
\begin{split}
	\norm{Y_{i:}^{(k)}}_1
	&= \sum_{j=1}^{n_2}\abs{Y_{ij}^{(k)}}
	= \sum_{j=1}^{n_2}\abs*{\sum_{l=1}^{m}(X_1)_{il}(X_2^{(k)})_{jl}}
	\le \sum_{j=1}^{n_2}\sum_{l=1}^{m}\abs{(X_1)_{il}(X_2)_{jl}}
	\le \Ctrim nm\sig^4, \\
	\norm{Y_{:j}^{(k)}}_1
	&= \sum_{i=1}^{n_1}\abs{Y_{ij}^{(k)}}
	= \sum_{i=1}^{n_1}\abs*{\sum_{l=1}^{m}(X_1)_{il}(X_2^{(k)})_{jl}}
	\le \sum_{i=1}^{n_1}\sum_{l=1}^{m}\abs{(X_1)_{il}(X_2)_{jl}}
	\le \Ctrim nm\sig^4.
\end{split}
\]
Define $\Ctrim'= \Ctrim/5s$
and a new trimming masks $N^{(k)}\in\set{0,1}^{n_1\times n_2}$ defined by
\[
N_{ij}^{(k)}
\weq \indic \Big(
 \onenorm{Y_{i:}^{(k)}} \vee
 \onenorm{Y_{:j}^{(k)}} \le \Ctrim' n(5sm\sigma^4)
 \Big).
\]
By noticing $\Ctrim nm\sigma^4 = \Ctrim' n(5sm\sigma^4)$ we see that $\Ctrim'$ is the trimming constant needed in Theorem~\ref{the:TrimmedMatrixConcentration} and $N^{(k)}$ is the corresponding trimming mask. Furthermore, notice that $N$ picks a smaller submatrix than $N^{(k)}$ implying $\spenorm{Y^{(k)}\odot N}\leq\spenorm{Y^{(k)}\odot N^{(k)}}$. Since $\sqrt{s/5}\geq 1$ by the assumption $s\geq 5$, there exists an absolute constant $C$ such that
\[
\begin{split}
    &\Pr\left(\spenorm{Y\odot N}\ge Ck_{\rm max}\left(s + \frac{\Ctrim}{\sqrt{5s}}\right)\sqrt{nm}\sig^2\right) \\
	&\le \Pr\left(\spenorm{Y\odot N}\ge Ck_{\rm max}\left(s + \frac{\Ctrim}{\sqrt{5s}}\right)\sqrt{nm}\sig^2,K_{\rm max}\le k_{\rm max},\max_i\norm{(X_2)_{i:}}_1\le 5sm\sig^2\right) \\&\quad+ \Pr\left(K_{\rm max}\ge \sqrt{s}\right) + \Pr\left(\max_i\norm{(X_2)_{i:}}_1\ge 5sm\sig^2\right) \\
	&\le \E\Bigg[\Pr\left(\bigcup_{k=1}^{k_{\rm max}}\set*{\spenorm{Y^{(k)}\odot N^{(k)}}\ge C\left(\sqrt{\frac{s}{5}} + \Ctrim'\right)\sqrt{n(5sm\sig^4)}}\mid X_2\right)\\&\quad\times \I\set*{\bigcap_i\set{\norm{(X_2)_{i:}}_1< 5sm\sig^2}}\Bigg] + m\left(\frac{3en\sig^2 }{\sqrt{s}}\right)^{\sqrt{s}} + (en)^{-s} \\
	&\le 2k_{\rm max}(en)^{-2\sqrt{s/5}} + m\left(\frac{3en\sig^2 }{\sqrt{s}}\right)^{\sqrt{s}} + (en)^{-s}. \\
\end{split}
\]
Substituting the bounds $k_{\rm max}\leq\sqrt{s}$ and $2\sqrt{s/5}\le s$ gives
\[
\Pr\left(\spenorm{Y\odot N}\ge C\left(s^{3/2} + \frac{\Ctrim}{\sqrt{5}}\right)\sqrt{nm}\sig^2\right)
\wle 3 \sqrt{s} (en)^{-2\sqrt{s/5}} + m \left(\frac{3en\sig^2}{\sqrt{s}}\right)^{\sqrt{s}}.
\]
It remains to simplify the probability bound. For the first term, recall the assumption $s\geq 5$. Now
\[
3 \sqrt{s} (en)^{-2\sqrt{s/5}}
\wle 3\sqrt{5}e^{\sqrt{s/5}}(en)^{-2\sqrt{s/5}}
\weq 3\sqrt{5}n^{-2\sqrt{s/5}}e^{-\sqrt{s/5}}
\wle 3\sqrt{5}n^{-1}e^{-\sqrt{s}/3}.
\]
For the second term, recall also the assumptions $3e^2n\sig^2\leq 1$ and $s\ge \frac{9\log^2m}{\log^2(1/3en\sig^2)}$. Now 
$\frac{3en\sig^2}{\sqrt{s}}\le \frac{e^{-1}}{\sqrt{5}}\le e^{-1}$
and consequently
\[
m \left(\frac{3en\sig^2}{\sqrt{s}}\right)^{\sqrt{s}}
\leq \exp\left(\log m - \frac{2\sqrt{s}}{3}\log\left(\frac{1}{3en\sig^2}\right) - \frac{\sqrt{s}}{3}\right)
\leq\exp\left(-\log m - \frac{\sqrt{s}}{3}\right).
\]
From the assumptions $8\log en\le m\sig^2$ and $3e^2n\sig^2\le 1$ it follows that $m\ge 8\sig^{-2}\ge 24e^2n\ge n$. Therefore, we may further bound $m^{-1}e^{-\sqrt{s}/3}\le n^{-1}e^{-\sqrt{s}/3}$. Combining these gives
\[
\Pr\left(\spenorm{Y\odot N}\ge C\left(s^{3/2} + \frac{\Ctrim}{\sqrt{5}}\right)\sqrt{nm}\sig^2\right)
\wle (3 \sqrt{5}+1)n^{-1}e^{-\sqrt{s}/3}
\wle 8n^{-1}e^{-\sqrt{s}/3}.
\]
The claim follows by substituting $t=s^{3/2}\geq 5^{3/2}\vee\frac{27\log^{3}m}{\log^{3}(1/3en\sig^2)}$.

\end{proof}

\begin{lemma}[Decoupling]
\label{lem:Decoupling}
Let $X \in \Z^{n\times m}$ be
a random matrix with independent sub-Poisson entries
with variance proxy $\sig^2$, and such that $\E\abs{X_{ij}} \le \sig^2$.
Define indicator matrices $M,N \in \{0,1\}^{n\times n}$ by $M_{ij} = \I( i \ne j)$ and
\[
 N_{ij}
 \weq \indic \Big(
 \onenorm{ ((\abs{X-\E X} \abs{X}^\top) \odot M)_{i:} }
 \vee \onenorm{ ((\abs{X-\E X} \abs{X}^\top) \odot M)_{:j} }
 \le \Ctrim n m \sig^4 \Big).
\]
There exists an absolute constant $C$ such that
\[
 \Pr\left( \spenorm{((X-\E X)X^\top) \odot M \odot N} \ge C(t+\Ctrim)\sqrt{nm}\sig^2 \right)
 \wle C n^{-1} e^{-t^{1/3}}
\]
whenever $4\log en\le m\sig^2$, $6 e^2 n \sig^2 \le 1$, 
and
$t \ge C\left( 1\vee\frac{\log^3 m}{\log^3(1/6en\sig^2)} \right)$.
\end{lemma}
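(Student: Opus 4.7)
The target quantity $\spenorm{((X - \E X)X^\top) \odot M \odot N}$ has entries $\langle X_{i:} - \E X_{i:}, X_{j:}\rangle$ for $i \ne j$, which are bilinear forms in two rows of $X$. Although the rows $X_{i:}$ and $X_{j:}$ are themselves independent for $i \ne j$, the same row appears in many entries of the product, creating coupling across entries of the matrix. The plan is to break this coupling by replacing the right factor $X^\top$ with an independent copy $(X')^\top$, at the price of absolute constants, and then invoke Lemma~\ref{lem:product of centered and integer matrices}, which is designed precisely for the product of two independent sub-Poisson matrices.

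The first step is to establish a decoupling inequality of the form
\[
 \Pr\bigl( \spenorm{ ((X - \E X)X^\top) \odot M \odot N } > u \bigr)
 \wle C_0 \, \Pr\bigl( \spenorm{ ((X - \E X)(X')^\top) \odot M \odot \tilde N } > u/C_0 \bigr),
\]
where $X'$ is an independent copy of $X$ and $\tilde N$ is the analogue of $N$ computed from $\abs{X - \E X}\abs{X'}^\top$. Such an inequality is a standard consequence of decoupling for off-diagonal U-statistic-type sums (e.g.\ de~la~Pe\~na--Montgomery-Smith, or \cite[\S6.1]{Vershynin_2018}) applied to the convex spectral-norm functional; alternatively, it can be obtained via a random partition of $[n]$ by Rademacher variables $\delta_1,\dots,\delta_n$, writing the off-diagonal matrix as $4\,\E_\delta$ of a submatrix on complementary blocks of rows and invoking Jensen's inequality for $\spenorm{\cdot}$.

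The second step is to apply Lemma~\ref{lem:product of centered and integer matrices}(ii) with $X_1 = X - \E X$ and $X_2 = X'$. Since $X$ has independent sub-Poisson entries with variance proxy $\sigma^2$, so does $X - \E X$ (the sub-Poisson MGF bound only involves the centered variable), and $\E\abs{(X - \E X)_{ij}} \le 2\sigma^2$ by the triangle inequality. The copy $X'$ is independent of $X - \E X$ and inherits the hypotheses on $X$. Applying part (ii) of that lemma — with a factor-of-two inflation absorbed into the absolute constants and $\Ctrim$ — yields
\[
 \Pr\bigl( \spenorm{((X - \E X)(X')^\top) \odot M \odot \tilde N} \ge C_1 (t + \Ctrim)\sqrt{nm}\sigma^2 \bigr)
 \wle 8 n^{-1} e^{-t^{1/3}/3}
\]
under the stated assumptions on $\sigma^2$ and $t$. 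Composing with Step~1 gives the desired tail bound, up to renaming of absolute constants.

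The main obstacle will be reconciling the two trimming masks $N$ and $\tilde N$: the former uses the coupled product $\abs{X - \E X}\abs{X}^\top$, while the decoupled version uses $\abs{X - \E X}\abs{X'}^\top$. The plan is to absorb the discrepancy via a symmetrization argument between $X$ and $X'$, inflating $\Ctrim$ by an absolute constant factor: since $X$ and $X'$ are i.i.d., the two row- and column-$L^1$ sums have the same distribution and concentrate at comparable rates by Lemma~\ref{lem:row L1 bound}, so on a high-probability event the trimming conditions defining $N$ and $\tilde N$ are equivalent up to a constant factor in the threshold. Threading this compatibility cleanly through the decoupling step is the most delicate part of the argument; the rest is bookkeeping of constants and tail exponents.
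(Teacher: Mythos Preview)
Your overall strategy --- decouple, then apply Lemma~\ref{lem:product of centered and integer matrices} --- is correct and matches the paper. But there are two genuine gaps in the execution.

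\textbf{The mask compatibility is the reason to use Rademacher, not independent copies.} You present the Rademacher partition as an ``alternative'' and then spend your discussion of the mask discrepancy on the independent-copy version. In the independent-copy route, $N$ depends on $\abs{X-\E X}\abs{X}^\top$ while $\tilde N$ depends on $\abs{X-\E X}\abs{X'}^\top$; these have no entrywise relationship, and ``same distribution plus concentration'' is not enough, because any tail-probability decoupling \`a la de~la~Pe\~na--Montgomery-Smith applies to a fixed convex functional, not to one containing a data-dependent mask. The paper uses the Rademacher partition precisely because it resolves this \emph{deterministically}: writing $X_{I:},X_{I^c:}$ as row-masked copies of the \emph{same} matrix, one has entrywise $\abs{X_{I:}-\E_I X_{I:}}\abs{X_{I^c:}}^\top \le (\abs{X-\E X}\abs{X}^\top)\odot M$, so $N_{ij}=1\Rightarrow\tilde N_{ij}=1$ with no probabilistic argument needed. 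Your proposed symmetrization fix is unnecessary if you take this route.

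\textbf{Jensen gives convex order, not a tail inequality.} Once you use Rademacher plus Jensen, what you obtain is $\E f(\spenorm{Y})\le \E f(4\spenorm{\tilde Y})$ for every nondecreasing convex $f$ --- an increasing-convex-order domination, not $\Pr(\spenorm{Y}>u)\le C_0\Pr(\spenorm{\tilde Y}>u/C_0)$. Lemma~\ref{lem:product of centered and integer matrices}(ii) gives a conditional tail bound for $\spenorm{\tilde Y}$, but you cannot ``compose'' a tail bound with a convex-order inequality to get a tail bound directly. The paper bridges this by setting $Z=(\spenorm{Y}/(8C\sqrt{nm}\sig^2)-\text{shift})_+$, applying the convex-order inequality with $f(x)=(x-c)_+^p$ to bound $\E Z^p$ via the integrated tail of $\spenorm{\tilde Y}$, summing these moments into a bound on $\E e^{\lambda Z^{1/3}}$, and finishing with Markov. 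This moment-to-tail conversion is a real step you are missing, not bookkeeping.
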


In particular, if $n \sig^2 \lesssim m^{-c}$ for some constant $c > 0$,
then
$\spenorm{((X-\E X)X^\top) \odot M \odot N} \lesssim \sqrt{nm}\sig^2$ with high probability as $n\to\infty$,
and if $n\sig^2\lesssim 1$, then
$\spenorm{((X-\E X)X^\top) \odot M \odot N}\lesssim \log^3(m)\sqrt{nm}\sig^2$ with high probability as $n\to\infty$.

\begin{proof}
We start by deriving a decoupling inequality which generalizes \cite[Theorem 6.1.1]{Vershynin_2018} to random matrices. 
Let $\de_1, \dots, \de_n$ be independent Bernoulli random variables with success probability $1/2$, independent of the random matrix $X$. Define the random index sets
\[
I = \{ i : \de_i = 1 \}, \qquad I^c = \{ i : \de_i = 0 \},
\]
and construct the masked matrices $X_{I:}, X_{I^c:} \in \Z^{n \times m}$ by
\[
(X_{I:})_{ij} = \de_i X_{ij}, \qquad (X_{I^c:})_{ij} = (1 - \de_i) X_{ij}.
\]
In other words, $X_{I:}$ is obtained by zeroing out all rows of $X$ outside of $I$, and similarly for $X_{I^c:}$ with respect to $I^c$.  We will show that for any nondecreasing convex function
$f \colon \R_{\ge 0} \to \R_{\ge 0}$,
the matrix
$Y = ((X-\E X)X^\top) \odot M \odot N$ is bounded by
\begin{equation}
 \label{eq:DecouplingInequality}
 \E f ( \spenorm{Y} )
 \wle \E f ( 4 \spenorm{\tilde Y} ),
\end{equation}
where $\tilde Y = ((X_{I:} - \E_I X_{I:})X_{I^c:}^\top) \odot \tilde{N}$ and $\tilde{N}\in\set{0,1}^{n\times n}$ is defined by
\[
 \tilde{N}_{ij}
 \weq \indic \Big(
 \onenorm{ (\abs{X_{I:} - \E_I X_{I:}}\abs{X_{I^c:}}^\top)_{i:} }
 \vee \onenorm{ (\abs{X_{I:} - \E_I X_{I:}}\abs{X_{I^c:}}^\top)_{:j} }
 \le \Ctrim n m \sig^4 \Big).
\]
Inequality~\eqref{eq:DecouplingInequality} means that the random variable
$\spenorm{Y}$ is dominated by the random variable
$4 \spenorm{\tilde Y}$
in the increasing convex stochastic order \cite{Leskela_Vihola_2013, Muller_Stoyan_2002}.  The right side of \eqref{eq:DecouplingInequality}
provides a convenient decoupling because $X_{I:}$ and $X_{I^c:}$ are conditionally independent given $I$.

Let us prove \eqref{eq:DecouplingInequality}.
Abbreviating
$\E_X = \E( \cdot | X)$,
we find that or any $i\ne j$,
\begin{align*}
 ((X-\E X)X^\top )_{ij}
 &\weq 4 \E_X \delta_i(1-\delta_j)((X-\E X)X^\top )_{ij} \\
 &\weq 4 \E_X \sum_{k}\delta_i(X-\E X)_{ik}(1-\delta_j)X_{jk} \\
 &\weq 4 \E_X \sum_{k}(X_{I:} - \E_I X_{I:})_{ik} (X_{I^{c}:})_{jk} \\
 &\weq 4 \E_X ((X_{I:} - \E_I X_{I:}) (X_{I^{c}:})^\top )_{ij}.
\end{align*}
Because $((X_{I:} - \E_I X_{I:})(X_{I^{c}:})^\top )_{ii}=0$,
we conclude that
\[
 ((X- \E X) X^\top) \odot M
 \weq 4 \E_X (X_{I:}-\E_I X_{I:})(X_{I^{c}:})^\top.
\]
Because the spectral norm is a convex function on the real vector space $\R^{n \times n}$,
and $f$ is convex and nondecreasing, it follows that $A \mapsto f(4 \spenorm{A})$ is convex,
and Jensen's inequality gives
\begin{align*}
 f\left(\spenorm{((X-\E X)X^\top) \odot M \odot N}\right)
 &\weq f \left(4 \spenorm{\E_X((X_{I:} - \E_I X_{I:}) X_{I^c:}^\top) \odot N}\right) \\
 &\wle \E_X f \left( 4 \spenorm{((X_{I:}-\E_I X_{I:})X_{I^c:}^\top) \odot N} \right).
\end{align*}
Notice that $N_{ij}=1$ implies $\tilde{N}_{ij}=1$. Consequently, $N$ picks a smaller submatrix resulting in an upper bound $\E_X f(4\spenorm{\tilde{Y}})$. By taking expectations, we conclude that \eqref{eq:DecouplingInequality} is valid.

Now Lemma~\ref{lem:product of centered and integer matrices} can be applied to the product
$(X_{I:}-\E_I X_{I:})X_{I^c:}^\top $ with variance proxy $2\sig^2$ (the number $2$ is needed
to have $\E\abs{(X_{I:}-\E X_{I:})_{ij}}\le 2\sig^2$).
Consequently, the trimming constant in Lemma~\ref{lem:product of centered and integer matrices}
becomes $\Ctrim'=\Ctrim/4$. Define
\[
 Z
 \weq \left(\frac{\spenorm{Y}}{8C\sqrt{nm}\sig^2} - \Ctrim'
   - \bigg (5^{3/2}\vee\frac{27\log^3 m}{\log^3(1/6en\sig^2)} \bigg) \right)_+,
\]
where $C$ is the universal constant from
Lemma~\ref{lem:product of centered and integer matrices}:(ii).
Since $f(x)=(x-1)_+^p$ is a nondecreasing convex function on $\R_{\ge 0}$ for $p \ge 1$,
by Lemma~\ref{lem:product of centered and integer matrices},
the absolute moments of $Z$ can be estimated from above as
\begin{align*}
 \E Z^p
 &\wle \E \E_I \left(\frac{4\spenorm{\tilde Y}}{8C\sqrt{nm}\sig^2}
   - \Ctrim' - \bigg( 5^{3/2}\vee\frac{27\log^3 m}{\log^3(1/6en\sig^2)} \bigg) \right)_+^p \\
 &\weq \E \int_0^{\infty} pt^{p-1}
   \Pr_I \left( \frac{\spenorm{\tilde Y}}{C\sqrt{nm}2\sig^2} \ge
   \bigg(5^{3/2} \vee \frac{27\log^3 m}{\log^3(1/3en2\sig^2)} \bigg) + t + \Ctrim'\right) \intdiff t \\
 &\overset{\text{Lemma~\ref{lem:product of centered and integer matrices}}}{\le}
   8 n^{-1} \int_0^{\infty} pt^{p-1}e^{-t^{1/3}/3}\intdiff t.
\end{align*}
By noting that
\begin{align*}
 \int_0^{\infty} pt^{p-1}e^{-t^{1/3}/3}\intdiff t
 \weq p3^{3(p-1)} \int_0^{\infty}u^{3(p-1)}e^{-u}3^4u^2 \intdiff u
 \weq 3^{3p} 3p\Gamma(3p)
 \weq 3^{3p} \Gamma(3p+1),
\end{align*}
we conclude that
\[
 \E Z^p \wle \frac{8}{n} 3^{3p} \Gamma(3p+1)
 \qquad \text{for all $p \ge 1$}.
\]
For $1/3\le p\le 1$, the above argument does not work as $x\mapsto x^p$ is not convex.
Now for any $0<\la<3^{-1}$, the geometric summation formula gives
\begin{align*}
 \E \left( e^{\la Z^{1/3}} - \sum_{k=0}^{2}\frac{\la^{k}Z^{k/3}}{k!} \right)
 &\weq \sum_{k=3}^{\infty}\frac{\la^{k}}{k!} \E Z^{k/3} \\
 &\wle \frac{8}{n} \sum_{k=3}^{\infty} \frac{\la^{k}}{k!}3^{k}\Gamma(k+1) \\
 &\weq \frac{8}{n} \sum_{k=3}^{\infty} (3\la)^{k} \\
 &\weq \frac{8}{n} \frac{(3\la)^3}{1-3\la}.
\end{align*}
By Markov's inequality, this gives a tail bound
\begin{align*}
 \Pr(Z \ge t)
 &\wle \Pr\left( \sum_{k=3}^{\infty} \frac{\la^{k}}{k!} Z^{k/3}
    \ge \sum_{k=3}^{\infty} \frac{\la^{k}}{k!} t^{k/3} \right) \\
 &\wle \frac{8}{n} \frac{(3\la)^3}{1-3\la}
    \left( \sum_{k=3}^{\infty}\frac{\la^{k}}{k!}t^{k/3} \right)^{-1}.
\end{align*}
To simplify the tail bound, let us study the relation between the series $\sum_{k=3}^{\infty}\frac{x^{k}}{k!}$ and the exponential function $e^{x}$. Denote the Taylor polynomials of the exponential function by $p_k(x) = \sum_{i=0}^{k}\frac{x^i}{i!}$. Since the derivatives of the polynomials satisfy $p_{k+1}'=p_k$, for any $x\ge 0$ and $k\ge 1$, we have
\[
\frac{\diff}{\diff x}\frac{e^x}{p_k(x)}
= \frac{e^xp_k(x)-e^xp_k'(x)}{p_k(x)^2}
= \frac{(p_k(x) - p_{k-1}(x))e^x}{p_k(x)^2}
= \frac{x^ke^x}{k!p_k(x)^2}
\ge 0.
\]
This implies that for any $x\ge x_0\ge 0$, we have $e^x/p_k(x)\ge e^{x_0}/p_k(x_0)$, or equivalently, $p_k(x)\le \frac{p_k(x_0)}{e^{x_0}}e^x$. Therefore, for any $t\ge t_0$, we have
\[
\begin{split}
	\left(\sum_{k=3}^{\infty}\frac{\la^{k}}{k!}t^{k/3}\right)^{-1}
	&= (e^{\la t^{1/3}} - p_2(\la t^{1/3}))^{-1} 
	\le \left(e^{\la t^{1/3}} - \frac{p_2(\la t_0^{1/3})}{e^{\la t_0^{1/3}}}e^{\la t^{1/3}}\right)^{-1} \\
	&= \left(1 - \frac{p_2(\la t_0^{1/3})}{e^{\la t_0^{1/3}}}\right)^{-1}e^{-\la t^{1/3}}.
\end{split}
\]
Choose $\la=6^{-1}$ and $t_0=\la^{-3}$. Then for all $t\ge 6^3=216$, we have
\[
\begin{split}
	\Pr(Z\ge t)
	&\wle\frac{8}{n}\frac{(3/6)^3}{1-3/6}\left(1 - \frac{p_2(1)}{e}\right)^{-1}e^{- t^{1/3}/6} 
    \wle \frac{25}{n}e^{- t^{1/3}/6}. 
\end{split}
\]
The claim follows now from
\begin{align*}
 \set{Z\ge t}
 &= \set*{\spenorm{Y} \ge
   8 C \left(\bigg(5^{3/2}\vee\frac{27\log^3 m}{\log^3(1/6en\sig^2)}\right)+t+\Ctrim' \bigg) \sqrt{nm}\sig^2} \\
 &\supset \set{\spenorm{Y} \ge 8C(2t+\Ctrim')\sqrt{nm}\sig^2} \\
 &\supset \set{\spenorm{Y} \ge 16C(t+\Ctrim)\sqrt{nm}\sig^2},
\end{align*}
when $t \ge 5^{3/2} \vee \frac{27\log^3 m}{\log^3(1/6en\sig^2)}$.
\end{proof}

\subsection{Proof of Theorem~\ref{thm:hollow gram matrix bound}}
\label{sec:hollow gram matrix bound}

Let us write the error term as
\[
 \Delta
 \weq \big((XX^\top) \odot M - \E X \E X^\top \big) \odot N
\] 
where the indicator matrices $M,N \in \{0,1\}^{n \times n}$ are given by
$M_{ij} = \indic\set{ i \ne j}$ and
\[
N=\xi\xi^{\top},
\qquad
\xi_i
= \indic\set*{\onenorm{ ((\abs{X} \abs{X}^\top) \odot M)_{i:} }\le \Ctrim nm \sig^4 }.
\]
We note that the error term can be decomposed according to
$
\Delta
= \Delta_1 + \Delta_2 - \Delta_3,
$
where
\begin{align*}
 \Delta_1 &\weq ((X-\E X)X^\top) \odot M \odot N, \\
 \Delta_2 &\weq ((\E X)(X-\E X)^\top) \odot M \odot N, \\
 \Delta_3 &\weq (\E X \, \E X^\top ) \odot I \odot N.
\end{align*}
Therefore, it suffices to derive upper bounds for the
spectral norms of $\Delta_1$, $\Delta_2$, $\Delta_3$.

(i) We start by analyzing $\spenorm{\Delta_1}$.
Let us first show that the trimming mask $N$ regularizes the rows and columns of
\[
\bar\Delta_1=(\abs{X-\E X}\abs{X}^\top) \odot M
\]
properly. Note that
\begin{align*}
 \abs{ (X_{ik}-\E X_{ik}) X_{jk} }
 &\wle \abs{X_{ik}X_{jk}} + \abs{ (\E X_{ik}) (X_{jk} - \E X_{jk})} + \abs{ (\E X_{ik}) (\E X_{jk}) } \\
 &\wle \abs{X_{ik}X_{jk}} + \maxnorm{\E X} \abs{ (X_{jk} - \E X_{jk})} + \maxnorm{\E X}^2.
\end{align*}
By summing the above inequality with respect to $k$,
we find that
\begin{align*}
 (\abs{X - \E X} \, \abs{X}^\top)_{ij}
 &\wle (\abs{X} \abs{X}^\top)_{ij} + \maxnorm{\E X} \inftynorm{ X - \E X} + m\maxnorm{\E X}^2.
\end{align*}
Let $t\ge 0$.
Since $\E\abs{X_{ij} - \E X_{ij}}\le 2\E\abs{X_{ij}}\le 2\sig^2$
and
$8\log en\le m \sig^2$, Lemma~\ref{lem:row L1 bound} implies that
the event
\[
\cA_1
\weq \Big\{ \inftynorm{X-\E X} < 2m\sig^2 + (1+t)\sqrt{8m\sig^2\log en} \, \Big\}
\]
occurs with probability  $\pr(\cA_1) \ge 1 - (en)^{-t}$. By recalling the assumptions  $\maxnorm{\E X} \le \sig^2$ and $8\log en\le m \sig^2$, we see that
\begin{align*}
(\abs{X - \E X} \, \abs{X}^\top)_{ij}
&\ <\  (\abs{X} \abs{X}^\top)_{ij} + \sigma^2\left(2m\sig^2 + (1+t)\sqrt{8m\sig^2\log en}\right) + m\sigma^4 \\
&\weq (\abs{X} \abs{X}^\top)_{ij} + \left(3 + (1+t)\sqrt{\frac{8\log en}{m\sig^2}}\right)m\sig^4 \\
&\wle (\abs{X} \abs{X}^\top)_{ij} + \left(4 + t\right)m\sig^4
\end{align*}
on $\cA_1$. By multiplying both sides above by $M_{ij}$, and summing with respect to $j$,
we find that
\begin{align*}
\onenorm{(\bar\Delta_1)_{i:}}
\wle \onenorm{ (\abs{X} \abs{X}^\top \odot M)_{i:}}
+ (4+t)nm\sig^4.
\end{align*}
If $\xi_i=1$, then by recalling the trimming threshold $\Ctrim n m \sig^4$ we conclude that
\begin{equation}
 \label{eq:Deltabar1UB}
 \onenorm{(\bar\Delta_1)_{i:}}
 \wle \underbrace{(4+\Ctrim+t)}_{\Ctrim'} n m \sig^4
\end{equation}
on $\cA_1$. Similarly, we obtain an analogous bound for the columns of $\bar\Delta_1$. Let us combine this with Lemma~\ref{lem:Decoupling} stating that the event
\[
\cA_2\weq \set{\spenorm{((X-\E X)X^\top) \odot M \odot N'} < C_1 (t+\Ctrim') \sqrt{nm} \sig^2}
\]
occurs with probability $\Pr(\cA_2)\geq 1 - C_1n^{-1}e^{-t^{1/3}}$ whenever
$4\log en\le m\sig^2$,
$6 e^2 n \sig^2 \le 1$,
and
$t \ge C_1 \left( 1\vee\frac{\log^3 m}{\log^3(1/6en\sig^2)} \right)$,
where $N'=\xi'\zeta'^{\top}$ is defined by
\[
\xi_i'
\weq \indic \set*{
\onenorm{ (\bar\Delta_1)_{i:} } 
\le \Ctrim' n m \sig^4 },
\qquad
\zeta_j'
\weq \indic \set*{
\onenorm{ (\bar\Delta_1)_{:j} } 
\le \Ctrim' n m \sig^4 }.
\]
On $\cA_1\cap\cA_2$ we saw that $\xi_i\leq\xi_i'$ and $\xi_i\leq\zeta_i'$. Therefore $N$ picks a smaller submatrix than $N'$. In particular,
\[
\spenorm{\Delta_1}
\wle\spenorm{((X - \E X)X^{\top})\odot M\odot N'}
\ <\ C_1(2t+4+\Ctrim)\sqrt{nm}\sigma^2
\]
with probability $\Pr(\cA_1\cap\cA_2)\geq 1- (en)^{-t}-C_1n^{-1}e^{-t^{1/3}}$.

(ii) For $\Delta_2$, we use the bound
\[
 \spenorm{\Delta_2}
 \wle \fronorm{\Delta_2}
 \wle \fronorm{(\E X)(X-\E X)^\top}.
\]
Now Lemma~\ref{lem:AX^T  bound} and the assumption $n\sig^2\le 6^{-1}e^{-2}$ imply that
\[
 \fronorm{(\E X)(X-\E X)^\top}
 \ <\ C_2tn\sig^2\sqrt{m\sig^2}
 \wle C_2t\sqrt{nm}\sig^2
\]
with probability at least $1 - n(en)^{-t}$ for all $t\ge 1$.

(iii) For $\Delta_3$, we note that
\begin{align*}
 \spenorm{\Delta_3}
 &\weq \max_{1 \le i \le n} \abs{ ( (\E X \, \E X^\top ) \odot N )_{ii} },
\end{align*}
which implies that
\begin{align*}
 \spenorm{\Delta_3}
 \wle \max_{1 \le i \le n} \sum_{j=1}^m (\E X_{ij})^2
 \wle m \maxnorm{\E X}^2
 \wle m \sig^4.
\end{align*}

(iv) By collecting the above bounds,
we may conclude that
\[
 \spenorm{\Delta}
 \ <\ (4C_1+C_2)(t+\Ctrim)\sqrt{nm}\sig^2 + m \sig^4
\]
with probability at least
\[
1 - (en)^{-t}-C_1n^{-1}e^{-t^{1/3}}-n(en)^{-t}
\wge 1 - (1+C_1+1)n^{-1}e^{-t^{1/3}}
\]
for all $t\ge \big(C_1 \big( 1\vee\frac{\log^3 m}{\log^3(1/6en\sig^2)} \big)\big)\vee 2$.
\qed

\subsection{Proof of Lemma~\ref{lem:hollow gram matrix row L1 bound}}
\label{sec:hollow gram matrix row L1 bound}

By noting that
$( \abs{X} \, \abs{X}^\top )_{ij} = \sum_k \abs{X_{ik} X_{jk}}$
and applying the triangle inequality to the decomposition
\[
 X_{ik} X_{jk}
 \weq (X_{ik} - \E X_{ik}) X_{jk}
 +  (\E X_{ik}) (X_{jk} - \E X_{jk})
 +  (\E X_{ik}) (\E X_{jk}),
\]
we obtain the entrywise matrix inequality
\begin{equation}
 \label{eq:AbsGram1}
 \abs{X} \, \abs{X}^\top
 \wle \abs{X - \E X} \, \abs{X}^\top
 + \abs{\E X} \, \abs{X - \E X}^\top
 + \abs{\E X} \, \abs{\E X}^\top.
\end{equation}
We also note that
\[
 \onenorm{(\abs{\E X} \, \abs{X - \E X}^\top)_{i:}}
 \weq \sum_j \sum_k \abs{\E X_{ik}} \, \abs{X_{jk} - \E X_{jk}}
 \wle n \maxnorm{\E X} \inftynorm{X-\E X},
\]
and
$
 \onenorm{(\abs{\E X} \, \abs{\E X}^\top)_{i:}}
 \le n m \maxnorm{\E X}^2.
$
By multiplying both sides of \eqref{eq:AbsGram1} entrywise by $M$,
it follows that
\begin{equation}
 \label{eq:AbsGram2}
 \begin{aligned}
 &\onenorm{ ((\abs{X} \abs{X}^\top) \odot M)_{i:} } \\
 &\wle \onenorm{ (\abs{X - \E X} \, \abs{X}^\top \odot M)_{i:} }
  +n \maxnorm{\E X} \inftynorm{X-\E X} + n m \maxnorm{\E X}^2.
 \end{aligned}
\end{equation}

Let us derive an upper bound for the first term on the right side of \eqref{eq:AbsGram2}. Assume that $t\geq 6\vee \frac{96\log^2 m}{\log^2(1/6en\sig^2)}$.
Fix an index $1 \le i \le n$,
and define matrices $X_1 \in \R^{1 \times m}$ and $X_2 \in \Z^{(n-1) \times m}$ by
\[
 (X_1)_{1k} = (X-\E X)_{ik}, \qquad 
 (X_2)_{jk} = X_{j'k},
\]
for $j=1,\dots,n-1$ and $k=1,\dots,m$, 
where $j'$ is the $j$th element of $\{1,\dots,n\} \setminus \{i\}$.
Then
\begin{align*}
 \onenorm{ ((\abs{X- \E X} \, \abs{X}^\top) \odot M)_{i:} } 
 &\weq \sum_{j=1}^n \sum_{k=1}^m \abs{ (X_{ik} - \E X_{ik}) X_{jk} M_{ij} } \\
 &\weq \sum_{j=1}^{n-1} \sum_{k=1}^m \abs{ (X_1)_{1k} (X_2)_{jk} }
 \weq \onenorm{( \abs{X_1} \, \abs{X_2}^\top )_{1:}}.
\end{align*}
The random matrices $X_1,X_2$ are independent,
have independent sub-Poisson entries with variance proxy $\sig^2 \le 2 \sig^2$,
and satisfy
$\E \abs{(X_1)_{1k}} \le 2 \sig^2$ and $\E \abs{(X_2)_{jk}} \le 2 \sig^2$ for all $j,k$.
Lemma~\ref{lem:product of centered and integer matrices}:(i) (with $2\sig^2$)
then implies that
\begin{equation}
 \label{eq:AbsGram3}
 \onenorm{ ((\abs{X- \E X} \, \abs{X}^\top) \odot M)_{i:} } 
 \weq \onenorm{( \abs{X_1} \, \abs{X_2}^\top )_{1:}}
 \ <\ 4 t n m \sig^4
\end{equation}
with probability at least
$1 - \frac{\sqrt{t}}{\sqrt{6}} \exp \left( -\frac{\sqrt{t}nm\sig^4}{10\sqrt{6}} \right)
- 2n^{-\sqrt{t}/2\sqrt{6}}$.

Let us bound the second term on the right side of \eqref{eq:AbsGram2}.
Let us assume that $t \ge 0$.
Because $\E\abs{X_{ij} - \E X_{ij}}\le 2\E\abs{X_{ij}}\le 2\sig^2$ and $m\sig^2\ge 8\log en$,
Lemma~\ref{lem:row L1 bound} implies that
\begin{equation}
 \label{eq:AbsGram4}
 \inftynorm{X-\E X}
 \ <\ 2 m \sig^2 + (1+t) \sqrt{8m\sig^2\log en}
 \wle (3+t)m\sig^2
\end{equation}
with probability at least $1-(en)^{-t}$.

Finally, let us substitute in \eqref{eq:AbsGram3}--\eqref{eq:AbsGram4} into \eqref{eq:AbsGram2} for $t\geq 6\vee \frac{96\log^2 m}{\log^2(1/6en\sig^2)}$. By recalling that $\maxnorm{\E X} \le \sig^2$, we may conclude that
\begin{align*}
 \onenorm{ ((\abs{X} \abs{X}^\top) \odot M)_{i:} }
 \wle (4+5 t) n m \sig^4
\end{align*}
with probability at least
\[
 1 - \frac{\sqrt{t}}{\sqrt{6}} \exp \left( -\frac{\sqrt{t}nm\sig^4}{10\sqrt{6}} \right)
 - 2 n^{-\sqrt{t}/2\sqrt{6}} - (en)^{-t}
 \wge 1 - \sqrt{t} \exp \left( -\frac{\sqrt{t}nm\sig^4}{10\sqrt{6}} \right)
 - 3 n^{-\sqrt{t}/2\sqrt{6}}.
\]
In terms of the indicator variable
\[
 \xi_i
 \weq \indic \Big( \onenorm{( \abs{X} \abs{X}^\top \odot M)_{i:}} \le (4+5t) n m \sig^4 \Big),
\]
this is equivalent to
\[
 \E (1-\xi_i)
 \wle \sqrt{t} \exp \left( -\frac{\sqrt{t}nm\sig^4}{10\sqrt{6}} \right) + 3 n^{-\sqrt{t}/2\sqrt{6}}.
\]
Because the above inequality holds for all $i$, it also holds for
$\E n^{-1}\sum_{i}(1-\xi_i)$.
Now the claim follows by Markov's inequality.
\qed

\end{appendix}

\newcommand{\acktext}{We thank Bogumi{\l} Kami\'nski and Paul Van Dooren for stimulating discussions and helpful comments.}

\newcommand{\fundingtext}{
Ian V\"alimaa’s research was partly supported by a doctoral research grant from the Emil Aaltonen Foundation.}

\ifims
\begin{acks}[Acknowledgments]
\acktext
\end{acks}

\begin{funding}
\fundingtext
\end{funding}
\fi

\ifarxiv
\paragraph{Acknowledgments}
\acktext \fundingtext
\fi

\end{document}